\tikzset{%
 state/.style={draw, shape=circle, fill=white, inner sep=1pt, minimum size=12pt},
 small state/.style={draw, shape=circle, fill=white, inner sep=1pt, minimum size=6pt},
 big state/.style={draw, shape=circle, fill=white, inner sep=0.5pt, minimum size=15pt},
 gap state/.style={shape=circle, fill=white, inner sep=0.5pt, minimum size=20pt},
 element/.style={draw, shape=circle, fill=white, inner sep=1.5pt},
 standard label/.style={text height=1.5ex,text depth=0.5ex},
 auto}
\theoremstyle{plain}
\newtheorem{theorem}{Theorem}[section]
\newtheorem{corollary}[theorem]{Corollary}
\newtheorem{lemma}[theorem]{Lemma}
\newtheorem{claim}[theorem]{Claim}
\newtheorem{example}[theorem]{Example}
\newtheorem{proposition}[theorem]{Proposition}
\newtheorem*{prop}{Proposition}
\newtheorem*{lem}{Lemma}
\theoremstyle{definition}
\newtheorem{chatexample}[theorem]{Example}
\newtheorem{definition}[theorem]{Definition}
\newtheorem{notation}[theorem]{Notation}
\newtheorem{remark}[theorem]{Remark}
\newcommand{\A}{{\mathbf A}}
\newcommand{\M}{{\mathbf M}}
\newcommand{\dom}[1]{\operatorname{dom}(#1)}
\newcommand{\End}[1]{\operatorname{End}(#1)}
\newcommand{\id}[1]{\operatorname{id}_{#1}}
\newcommand{\ISP}[1]{\operatorname{\mathbb{ISP}}(#1)}
\newcommand{\ks}[1]{\operatorname{kill}(#1)}
\newcommand{\ran}[1]{\operatorname{ran}(#1)}
\newcommand{\sg}[2]{\operatorname{sg}_{#1}(#2)}
\newcommand{\abs}[1]{\lvert#1\rvert}
\newcommand{\comp}{{\setminus}}
\newcommand{\const}[1]{\underline {#1}}
\renewcommand{\emptyset}{\varnothing}
\renewcommand{\ge}{\geqslant}
\renewcommand{\le}{\leqslant}
\newcommand{\ov}[2]{{\vphantom{\vrule height 9.5pt width .5pt depth 2.5pt}%
  \overset{\raise2.5pt\hbox{\smash{$\scriptstyle{#1}$}}}{\lower2.5pt\hbox{\smash{$\scriptstyle{#2}$}}}}}
\renewcommand{\phi}{\varphi}
\newcommand{\rest}[1]{{\upharpoonright}_{#1}}
\newcommand{\set}[2]{\{\,#1 \mid \text{#2}\,\}}
\newcommand{\N}{{\mathbb N}}
\newcommand{\Z}{{\mathbb Z}}
\newcommand{\du}{\overset{.}\cup}
\newcommand{\scrM}{\mathscr M}
\newcommand{\scrbarM}{\overline{\scrM}}
\newcommand{\barY}{\overline{Y}}
\newcommand{\col}[1]{c_{#1}} 
\newcommand{\newi}{\sigma}
\newcommand{\newip}{\tau}
\newcommand{\newiq}{\rho}
\newcommand{\newir}{\theta}
\newcommand{\newj}{\alpha}
\newcommand{\newjp}{\beta}
\newcommand{\Zp}[1]{\mathbb Z_{p^{#1}}}
\newcommand{\Izero}{B} 
\newcommand{\Iast}{C} 
\newcommand{\Cbar}{{\overline c}} 
\newcommand{\tup}[1]{\mathbf{#1}}
\newcommand{\K}{\mathbin{\diamondsuit}}
\begin{document}

\title{Dualizability of automatic algebras}

\author{W. Bentz}
\address[W. Bentz]{Centro de \'Algebra\\Universidade de Lisboa 1649-003 Lisboa\\Portugal}
\email{wolfbentz@googlemail.com}

\author{B. A. Davey}
\address[B. A. Davey]{Department of Mathematics and Statistics\\La Trobe University\\Victoria 3086\\Australia}
\email{B.Davey@latrobe.edu.au}

\author{J. G. Pitkethly}
\address[J. G. Pitkethly]{Department of Mathematics and Statistics\\La Trobe University\\Victoria 3086\\Australia}
\email{J.Pitkethly@latrobe.edu.au}

\author{R. Willard}
\address[R. Willard]{Department of Pure Mathematics\\University of Waterloo\\Waterloo, Ontario N2L 3G1\\Canada}
\email{rdwillar@uwaterloo.ca}

\begin{abstract}
We make a start on one of George McNulty's
\emph{Dozen Easy Problems}: ``Which finite automatic
algebras are dualizable?''
We give some necessary and some sufficient conditions for dualizability.
For example, we prove that a finite automatic algebra is dualizable if its letters
act as an abelian group of permutations on its states.
To illustrate the potential
difficulty of the general problem, we exhibit an infinite
ascending chain $\mathbf A_1 \le \mathbf A_2 \le \mathbf A_3 \le \dotsb$ of finite
automatic algebras that are alternately dualizable and non-dualizable.
\end{abstract}

\maketitle

\section{Introduction}\label{sec:intro}

In this paper, we shall make a start on Problem~6 from George McNulty's \emph{Dozen
Easy Problems}~\cite{dozen}: ``Which finite automatic algebras are
dualizable?''

An \emph{automatic algebra} is a set with binary operation $\A = \langle Q \cup \Sigma \cup
\{0\}; \cdot\rangle$ that encodes a partial automaton with
state set $Q$ and alphabet~$\Sigma$: the multiplication satisfies
\[
q \cdot a = r \iff q \overset a\to r,
\]
for all $q,r \in Q$ and $a \in \Sigma$; all other products give
the default element~$0  \notin Q \cup \Sigma$. The example
featured in McNulty's problem
is given in Figure~\ref{fig:boozer}.


\begin{figure}[ht]
\begin{center}
\begin{tikzpicture}[baseline=0pt]
  \node[state] (q) at (0,0) {$q$};
  \node[state] (r) at (1,0) {$r$};
  \node[state] (s) at (2,0) {$s$};
  \path (q) edge [->] node {$a$} (r);
  \path (r) edge [loop above] node {$b$} (r);
  \path (r) edge [->] node {$c$} (s);
  \node at (1,-0.75) {$B$};
\end{tikzpicture}
\qquad
 \begin{tabular}{c|ccccccc}
  $\cdot$ & $0$ & $q$ & $r$ & $s$ & $a$ & $b$ & $c$ \\\hline
      $0$ & $0$ & $0$ & $0$ & $0$ & $0$ & $0$ & $0$ \\
      $q$ & $0$ & $0$ & $0$ & $0$ & $r$ & $0$ & $0$ \\
      $r$ & $0$ & $0$ & $0$ & $0$ & $0$ & $r$ & $s$ \\
      $s$ & $0$ & $0$ & $0$ & $0$ & $0$ & $0$ & $0$ \\
      $a$ & $0$ & $0$ & $0$ & $0$ & $0$ & $0$ & $0$ \\
      $b$ & $0$ & $0$ & $0$ & $0$ & $0$ & $0$ & $0$ \\
      $c$ & $0$ & $0$ & $0$ & $0$ & $0$ & $0$ & $0$
 \end{tabular}
\end{center}
\caption{An example of an automatic algebra}\label{fig:boozer}
\end{figure}
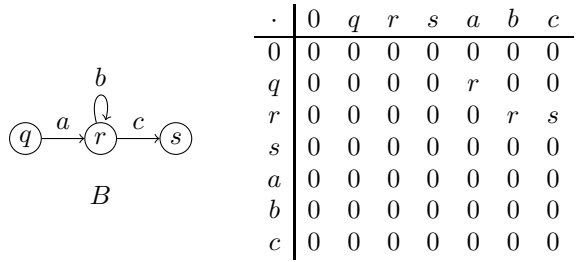

Automatic algebras have been studied mostly as a source of finite algebras with non-finitely based equational theories.
The first finite algebra shown to have a non-finitely based equational theory, due to Lyndon \cite{Lyn} in 1954, is the
automatic algebra based on the automaton $L$ pictured in Figure~\ref{fig:background}.
Automatic algebras were probably first identified as a ``nice class" of algebras by Kearnes and Willard~\cite{KW},
who proved that automatic algebras are 2-step strongly solvable. They also gave a small example of an algebra from
this class whose equational theory is inherently non-finitely based and has residually large models;
it is the automatic algebra based on the automaton
$L_3^\ast$ in Figure~\ref{fig:background}.   Another automatic algebra, based on the automaton $R$ in Figure~\ref{fig:background}, has  the same property and played a supporting role in the
spectacular negative solution of R.~McKenzie to Tarski's finite basis problem \cite{mck-tarski} and the Quackenbush conjecture \cite{mck-quack}.

\begin{figure}[th]
\begin{center}
\begin{tikzpicture}
  \node[state] (q) at (0,0) {$q$};
  \node[state] (r) at (1,0) {$r$};
  \node[state] (s) at (2,0) {$s$};
  \path (q) edge [<-] node {$a$} (r);
  \path (q) edge [loop above] node {$a,b,c$} (q);
  \path (r) edge [loop above] node {$b$} (r);
  \path (s) edge [loop above] node {$a,b,c$} (s);
  \path (r) edge [->] node {$c$} (s);
  \node at (1,-0.8) {$L$};
\end{tikzpicture}
\qquad
\begin{tikzpicture}
  \node[state] (q) at (0,0) {$q$};
  \node[state] (r) at (1,0) {$r$};
  \node[state] (s) at (2,0) {$s$};
  \path (q) edge [loop above] node {$c$} (q);
  \path (r) edge [loop above] node {$a$} (r);
  \path (s) edge [loop above] node {$b$} (s);
  \path (q) edge [bend left=30,<-]  node {$c$} (r);
  \path (r) edge [bend left=30,<-]  node {$a$} (q);
  \path (r) edge [bend left=30,<-]  node {$a$} (s);
  \path (s) edge [bend left=30,<-]  node {$b$} (r);
  \node at (1,-0.8) {$L_3^\ast$};
\end{tikzpicture}
\qquad
\begin{tikzpicture}
  \node[state] (q) at (0,0) {$q$};
  \node[state] (r) at (1,0) {$r$};
  \path (q) edge [<-] node {$a$} (r);
  \path (r) edge [loop above] node {$b$} (r);
  \path (q) edge [loop above] node {$c$} (q);
  \node at (.5,-0.8) {$R$};
\end{tikzpicture}
\end{center}
\caption{Three more examples}\label{fig:background}
\end{figure}
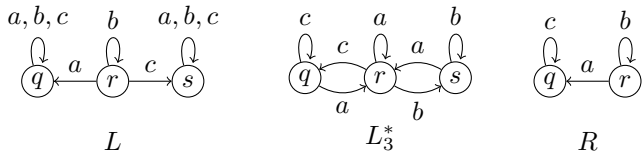

Automatic algebras were first named and explored systematically in the PhD theses of Z.~Sz\'{e}kely \cite{ZS}
and J.~Boozer \cite{B:PhD}, and the article by McNulty, Sz\'ekely and Willard~\cite{MSW}.  These works provide evidence that having a finitely based
equational theory is a relatively rare property amongst finite automatic algebras.
Because of this, the class of finite automatic algebras may also be an interesting source of examples for studying dualizability.

A finite
algebra is dualizable if it is possible (in a certain natural way) to represent
the algebras in the quasi-variety $\ISP\M$ as algebras of continuous
structure-preserving maps. There is known to be a link between dualizability
and residual smallness~\cite{rlind}: if a finite algebra is dualizable and
generates a congruence-SD($\wedge$) variety, then this variety is residually
small. But it is unclear whether there is any link between dualizability and
finite basedness. The following question, posed over 10 years ago~\cite{DILM},
is still open: `Is every finite dualizable algebra finitely based?'

In this paper, we give general characterizations of dualizability within two
restricted classes of finite automatic algebras: $\abs \Sigma = 1$
(Theorem~\ref{thm:class1}) and $\abs Q = 2$ (Theorem~\ref{thm:class2}). Beyond
these two cases, we give several general necessary conditions for dualizability
(\ref{thm:wc}, \ref{thm:pcomm}, \ref{lem:rankill}) and sufficient conditions
for dualizability (\ref{lem:constants}, \ref{thm:affine}).

All the examples of dualizable automatic algebras that we find are known to be
finitely based, by Boozer~\cite[Theorems 1.12 and 1.16]{B:PhD}.
We shall also see that the four non-finitely based automatic algebras that encode $B$, $L$, $L_3^*$ and $R$
are non-dualizable; see Example~\ref{ex:all4}. (The one based on $B$ was shown by Boozer~\cite{B:PhD} to be non-finitely based
but not inherently non-finitely based.)

The most involved proof is that of Theorem~\ref{thm:affine}, which essentially
asserts the following: if $\Sigma$ acts as a coset of a subgroup of an abelian group
of permutations of~$Q$, then the automatic algebra $\M$ is dualizable. We
complement this theorem by giving examples of non-dualizable automatic
algebras where $\Sigma$ acts as a set of commuting
permutations of~$Q$ (\ref{thm:nondcomm}, \ref{ex:nondcomm}).

 To illustrate the potential
difficulty of McNulty's problem, we exhibit an infinite ascending chain $\A_1
\le \A_2 \le \A_3 \le \dotsb$ of finite automatic algebras that are alternately
dualizable and non-dualizable (Example~\ref{ex:chain}). This sort of bad
behavior does not occur in any of the classes of finite algebras where
dualizability has successfully been characterized: for example,
algebras with J\'onsson terms~\cite{DW:first,DHM:NU}, groups~\cite{QS1,QS2,MN}, commutative
rings with unity~\cite{CISSW}, graph algebras~\cite{DILM} and flat graph
algebras~\cite{LMW}. In fact, the only other such chain that has been found so
far is in the class of unary algebras~\cite{idual}.

\begin{notation}
When working with automatic algebras, we usually indicate the groupoid
operation $\cdot$ by concatenation. Note that a groupoid term that is
\emph{not} bracketed from the left like $(\dotsm((x_1x_2)x_3)x_4 \dotsm)x_n$
must be constantly $0$ when interpreted in any automatic algebra and is therefore
equivalent to the term~$xx$. So we \emph{always bracket from the left}. Instead
of writing an expression of the form
\[
( \dotsb (((u \cdot v_1) \cdot v_2) \cdot v_3 ) \dotsb ) \cdot v_n,
\]
we usually just write $uv_1v_2v_3\dotsm v_n$, but we may choose
to write $u \cdot v_1v_2v_3\dotsm v_n$ or $u \cdot v_1 \cdot
v_2 \cdot v_3 \cdot \dotsb \cdot v_n$. We write $u \cdot v^n$
to mean $uvv \dotsm v$, where the $v$ occurs $n$ times. Even if
we use brackets, this does not override the
bracket-from-the-left rule: for example, the expression
$q(ab)^2$ means $qabab$, which really means $(((q\cdot a)\cdot
b)\cdot a) \cdot b$.
\end{notation}

We give a brief definition of `dualizable' in Section~\ref{sec:toolkit}. In the
next section we do not need the definition, just the statement of the Inherent
Non-dualizability Lemma. For a comprehensive introduction to natural duality theory,
see~\cite{NDftWA}.

\section{Two non-dualizability results}\label{sec:easynondual}

In this section, we give two general necessary conditions for an automatic
algebra to be dualizable. We shall use the following standard technique for
proving non-dualizability, due to Davey, Idziak, Lampe and McNulty~\cite{DILM};
see also~\cite[10.5.5]{NDftWA}. Note that a finite algebra $\M$ is
\emph{inherently non-dualizable} if every finite algebra that has $\M$ as a
subalgebra is non-dualizable.

\begin{lemma}[Inherent non-dualizability {\cite{DILM}}]\label{lem:IND}
Let\/ $\M$ be a finite algebra and let\/ $\mu \colon \N \to \N$. Assume there
is a subalgebra $\A$ of\/~$\M^I$, for some set\/~$I$, and an infinite subset\/
$A_0$ of\/ $A$ such that
\begin{enumerate}
 \item for each $n \in \N$ and each congruence $\theta$ on~$\A$ of index at
     most\/~$n$, the equivalence relation $\theta \rest {A_0}$ has a unique
     block of size greater than $\mu(n)$, and
 \item the algebra $\A$ does not contain the element\/ $g$
     of\/ $M^I$ given by $g(i) := a_i(i)$, where $a_i$ is
     any element of the unique infinite block of\/ $\ker
     (\pi_i) \rest {A_0}$.
\end{enumerate}
Then $\M$ is inherently non-dualizable.
\end{lemma}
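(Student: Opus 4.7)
The plan is to prove the contrapositive. Fix any finite algebra $\B$ with $\M$ as a subalgebra and suppose for contradiction that $\B$ is dualizable via some alter ego $\tilde\B$; I would construct a ``ghost'' morphism $\phi\colon \A^{*} \to \tilde\B$ in the dual category, where $\A^{*} := \hom(\A, \B)$ carries its canonical pointwise structure, such that $\phi$ is not evaluation at any element of $A$. Such a $\phi$ contradicts the fact that the unit $A \to \hom(\A^{*}, \tilde\B)$ is bijective whenever $\B$ is dualizable.

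I would begin by checking that $g$ itself is well defined. For each $i \in I$, the projection $\pi_i\colon \A \to \M$ has kernel of index at most $\abs M$, so condition~(1) with $n = \abs M$ singles out a unique block of $\ker(\pi_i)\rest{A_0}$ of size exceeding $\mu(\abs M)$. Since $A_0$ is infinite and splits among at most $\abs M$ blocks, this ``large'' block is the unique infinite one, and any two of its representatives agree at coordinate $i$. I then define $\phi$ by the same recipe: for each $\alpha \in \A^{*}$, condition~(1) with $n = \abs B$ identifies a unique large block of $\ker(\alpha)\rest{A_0}$, and I set $\phi(\alpha) := \alpha(a_\alpha)$ for any representative $a_\alpha$ of that block. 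Independence of the choice is immediate because $\alpha$ is constant on each of its own kernel classes.

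The central calculation is that $\phi$ preserves every $k$-ary relation $r$ and (partial) operation $f$ of $\tilde\B$. Given $\alpha_1, \ldots, \alpha_k \in \A^{*}$, the refined congruence $\bigcap_{j=1}^{k} \ker(\alpha_j)$ has index at most $\abs B^{k}$, so condition~(1) isolates a unique block of its restriction to $A_0$ of size exceeding $\mu(\abs B^{k})$. By uniqueness this intersection block is contained in the large block associated with each individual $\alpha_j$, so picking a \emph{common} representative $a^{*}$ gives $\phi(\alpha_j) = \alpha_j(a^{*})$ simultaneously for all $j$. Preservation of $r$ or $f$ then collapses to the tautology that $a^{*} \in A$ and the dual structure on $\A^{*}$ is defined pointwise. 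Finally, if $\phi$ coincided with evaluation at some $a \in A$, specializing to $\alpha = \pi_i$ would force $a(i) = \phi(\pi_i) = g(i)$ for every $i$, contradicting condition~(2).

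The step I expect to be most delicate is verifying that $\phi$ is continuous with respect to the natural Boolean topology on $\A^{*}$, so that $\phi$ is a morphism in the dual category and not merely an abstract algebraic one. Since $\B$ is finite and discrete this reduces to showing each fibre $\phi^{-1}(b)$ is clopen, and the role of the bound $\mu(n)$ is precisely to make the assignment $\alpha \mapsto a_\alpha$ ``locally constant enough'' on $\A^{*}$ to secure this — the standard technical content of the argument recorded in~\cite[10.5.5]{NDftWA}.
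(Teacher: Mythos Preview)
The paper does not itself prove this lemma; it is quoted from~\cite{DILM} (with a pointer to~\cite[10.5.5]{NDftWA}), so there is no in-paper argument to compare against. Your sketch is precisely the standard proof from those references: define $\phi(\alpha)$ via the unique large block of $\ker(\alpha)\rest{A_0}$, show that $\phi$ is locally an evaluation by passing to the common refinement $\bigcap_j\ker(\alpha_j)$, and then observe that $\phi$ cannot be an evaluation because $g\notin A$.

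One small point worth making explicit: your ``by uniqueness'' step works because the distinguished block is in every case the unique \emph{infinite} block of the restricted congruence (you note this for $\pi_i$, and the same pigeonhole argument applies to every $\alpha$ and to the intersection $\bigcap_j\ker(\alpha_j)$). This is what guarantees that the large block of the refinement sits inside the large block for each~$\alpha_j$ --- without passing through the infinite block, one would need monotonicity of $\mu$ to compare $\mu(\abs B)$ with $\mu(\abs B^{k})$, which is not assumed. You are also right that continuity of $\phi$ is the one genuinely delicate step and is exactly where the argument in~\cite[10.5.5]{NDftWA} does its technical work.
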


\begin{notation}\label{not:seq}
When applying the lemma above, we use the following notation to
specify elements of~$M^I$. For all $n \in \N$, all distinct
$i_1,\dotsc,i_n\in I$ and all $u,v_1,\dotsc,v_n\in M$, define
$u \ov{v_1}{i_1} \ov{\dots}{\dots} \,\ov{v_n}{i_n} \in M^I$ by
\begin{equation*}
 u \ov{v_1}{i_1} \ov{\dots}{\dots} \,\ov{v_n}{i_n}(j) =
\begin{cases}
 v_k &\text{if $j=i_k$, for some $k\in \{1,\dotsc,n\}$,}\\
 u &\text{otherwise.}
\end{cases}
\end{equation*}
For $v \in M$, we define $\const v \in M^I$ to be the constant
map with value~$v$.
\end{notation}

\begin{definition}\label{def:whiskcyc}
Fix an automatic algebra~$\M = \langle Q \cup \Sigma \cup
\{0\}; \cdot \rangle$ and let $a \in \Sigma$. We shall say that
the letter $a$ acts as \emph{whiskery cycles} if, for all $q
\in Q$, there exists $n \in \N$ such that $qa = qa^{n+1}$.
Informally, this means that each state in $Q$ is either
\begin{itemize}
 \item in an $a$-cycle,
 \item only one step away from an $a$-cycle, or
 \item not in the domain of~$a$.
\end{itemize}
See Figure~\ref{fig:wc} for an example of a letter acting as whiskery cycles.
\end{definition}

\begin{figure}[t]
\begin{center}
\begin{tikzpicture}
 \begin{scope}
      \node[small state] (1) at (0,0) {};
      \node[small state] (2) at ($(1)+(0.7,0)$) {};
      \path (1) edge [->, bend left=30] node {} (2);
      \path (2) edge [->, bend left=30] node {} (1);
 \end{scope}
 \begin{scope}[xshift=2.25cm,yshift=0.61cm]
      \node[small state] (1) at (0,0) {};
      \node[small state] (2) at ($(1)+(-60:0.7)$) {};
      \node[small state] (3) at ($(1)+(-120:0.7)$) {};
      \node[small state] (4) at ($(1)+(90:0.7)$) {};
      \node[small state] (5) at ($(1)+(55:0.7)$) {};
      \node[small state] (6) at ($(1)+(125:0.7)$) {};
      \node[small state] (7) at ($(3)+(-135:0.7)$) {};
      \path (1) edge [->, bend left=30] node {} (2);
      \path (2) edge [->, bend left=30] node {} (3);
      \path (3) edge [->, bend left=30] node {} (1);
      \path (4) edge [->] node {} (1);
      \path (5) edge [->] node {} (1);
      \path (6) edge [->] node {} (1);
      \path (7) edge [->] node {} (3);
   \end{scope}
\begin{scope}[xshift=4cm]
      \node[small state] (1) at (0,0) {};
      \node[small state] (4) at ($(1)+(120:0.7)$) {};
      \node[small state] (5) at ($(1)+(60:0.7)$) {};
      \path (1) edge [->, loop below] node {} (1);
      \path (4) edge [->] node {} (1);
      \path (5) edge [->] node {} (1);
   \end{scope}
\begin{scope}[xshift=5.25cm]
      \node[small state] (1) at (0,0) {};
   \end{scope}
\begin{scope}[xshift=6.25cm]
      \node[small state] (1) at (0,0) {};
   \end{scope}
\end{tikzpicture}
\end{center}
\caption{An example of whiskery cycles}\label{fig:wc}
\end{figure}
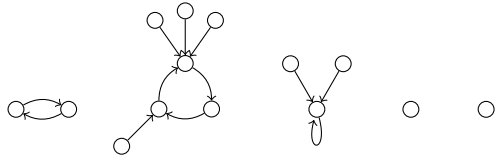

\begin{figure}[t]
\begin{center}
\begin{tikzpicture}
  \begin{scope}
    \node at (2,-0.5) {$\mathbf F_0$};
    \node[big state] (q) at (0,0) {$q$};
    \node[big state] (r) at (1,0) {$r$};
    \path (q) edge [->] node {$a$} (r);
  \end{scope}
  \begin{scope}[xshift=4.5cm]
    \node at (5.2,-0.5) {$\mathbf F_m$ ($m \in \N$)};
    \node[big state] (q) at (0,0) {$q$};
    \node[big state] (r) at ($(q)+(1,0)$) {$r$};
    \node[big state] (s1) at ($(r)+(1,0)$) {$s_1$};
    \node[big state] (s2) at ($(s1)+(54:1)$) {$s_2$};
    \node[big state] (s3) at ($(s2)+(-18:1)$) {$s_3$};
    \node[gap state] (s4) at ($(s3)+(-90:1)$) {${\cdot}$};
    \node[big state] (sm) at ($(s1)+(-54:1)$) {$s_m$};
    \node at ($(s4)+(54:0.1)$) {${\cdot}$};
    \node at ($(s4)+(-126:0.1)$) {${\cdot}$};
    \path (q) edge [->] node {$a$} (r);
    \path (r) edge [->] node {$a$} (s1);
    \path (s1) edge [->, bend left=18] node[xshift=1pt, yshift=-3pt] {$a$} (s2);
    \path (s2) edge [->, bend left=18] node[xshift=-3pt] {$a$} (s3);
    \path (s3) edge [-, bend left=15] node {} (s4);
    \path (s4) edge [->, bend left=15] node {} (sm);
    \path (sm) edge [->, bend left=18] node[xshift=1pt, yshift=1pt] {$a$} (s1);
  \end{scope}
\end{tikzpicture}
\end{center}
\caption{Forbidden subalgebras for whiskery cycles}\label{fig:fwc}
\end{figure}
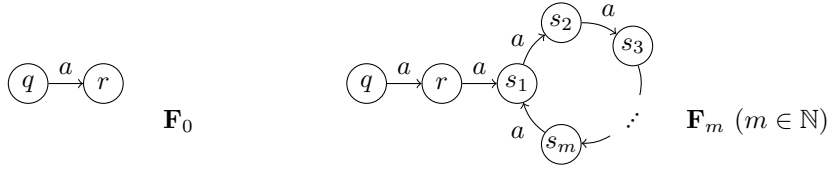

\begin{lemma}\label{lem:wc}
Let\/ $\M$ be a finite automatic algebra. The following
are equivalent:
\begin{enumerate}
\item each letter acts as whiskery cycles;
\item $\M$ satisfies the quasi-equation\/ $vxx \approx wxx
    \implies vx \approx wx$;
\item for each $m \in \N \cup \{0\}$, the automatic algebra
    $\mathbf F_m$ does not embed into~$\M$; see
    Figure~\ref{fig:fwc}.
\end{enumerate}
\end{lemma}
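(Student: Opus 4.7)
The plan is to close the cycle $(1)\Rightarrow(2)\Rightarrow(3)\Rightarrow(1)$.

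For $(1)\Rightarrow(2)$, I argue by cases on $v,w,x\in M$ satisfying $vxx=wxx$. If $x\notin\Sigma$, then $vx=wx=0$ already. If $v\notin Q$ (or symmetrically $w\notin Q$), then $vx=0$, and it suffices to show $wx=0$; but if $wx$ were a nonzero state while $wxx=0$, then $w$ would witness a failure of whiskery cycles at the letter $x$. The crux is when $v,w\in Q$, $x=a\in\Sigma$, and both $q_1:=va$ and $q_2:=wa$ are nonzero states with common image $q_1a=q_2a$. Whiskery cycles applied at $v$ and at $w$ forces $q_1$ and $q_2$ onto $a$-cycles; since cycles in a functional graph are pairwise disjoint and each cycle element has a unique in-cycle predecessor, the shared image collapses $q_1=q_2$.

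For $(2)\Rightarrow(3)$, I exhibit an explicit failure of the quasi-equation inside each $\mathbf F_m$; this then transfers to $\M$ whenever $\mathbf F_m$ embeds. In $\mathbf F_0$, take $(v,w,x)=(q,0,a)$: both squares are $0$ while $vx=r\ne 0=wx$. In $\mathbf F_m$ with $m\ge 1$, take $(v,w,x)=(q,s_{m-1},a)$ with the convention $s_0:=s_m$; then $vx=r$ and $wx=s_m$ are distinct whereas $vxx=s_1=wxx$.

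For $(3)\Rightarrow(1)$, I reason contrapositively. Suppose a letter $a$ fails whiskery cycles at a state $q$. Then $r:=qa$ is a nonzero state that itself lies on no $a$-cycle, so by finiteness of $\M$, the orbit $r,ra,ra^2,\dotsc$ either reaches $0$ at some first step $k\ge 1$ or enters an $a$-cycle of some length $\ell\ge 1$. In the former case, mapping $(q,r)\mapsto(q,r)$ when $k=1$ or $(q,r)\mapsto(ra^{k-2},ra^{k-1})$ when $k\ge 2$ embeds $\mathbf F_0$. In the latter, the analogous maneuver using the two states immediately preceding the cycle, together with the cycle itself, embeds $\mathbf F_\ell$. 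Injectivity is a routine check: any coincidence among the relevant iterates would force an early iterate onto an $a$-cycle, contradicting our failure of whiskery cycles.

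The main obstacle is the functional-graph step inside $(1)\Rightarrow(2)$: justifying that two states on $a$-cycles with a common $a$-image must coincide. Once that observation is in hand, the rest of the argument is pattern-matching against the definitions of the $\mathbf F_m$.
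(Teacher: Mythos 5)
Your proposal is correct and follows the same overall plan (the cycle $(1)\Rightarrow(2)\Rightarrow(3)\Rightarrow(1)$), with identical witnesses for $(2)\Rightarrow(3)$ (your $s_{m-1}$ with the $s_0 := s_m$ convention is exactly the paper's $s_k$) and the same decomposition for $(3)\Rightarrow(1)$ into the ``orbit dies'' and ``orbit enters a cycle'' cases. Where you diverge is $(1)\Rightarrow(2)$: you run a case analysis culminating in a functional-graph argument (states on disjoint $a$-cycles with equal $a$-images must coincide, since $a$ restricted to a cycle is a bijection). The paper instead gives a one-line algebraic computation: by whiskery cycles there exist $m,n\in\N$ with $vx = vxx^m$ and $wx = wxx^n$ (these hold trivially if $vx=0$ or $wx=0$, which absorbs all your degenerate cases at once), whence $vx = vxx^{mn} = wxx^{mn} = wx$. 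That manipulation is uniform over all of $M$ and avoids any appeal to the cycle structure, whereas your route is more concrete but obliges you to track the subcases separately --- in particular you only explicitly dispatch $v\notin Q$ and the two-nonzero-states ``crux,'' silently leaving the case $v,w\in Q$ with $va=0$ to the reader, though the same reasoning covers it. Both arguments are sound; the paper's is tighter precisely because it sidesteps the graph-theoretic observation you identified as the obstacle.
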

\begin{proof}[Sketch proof.]
(1)\,$\Rightarrow$\,(2): Assume that each letter acts as
whiskery cycles. Let $v, w, x \in M$ and assume that $vxx =
wxx$ in $\M$. There are $m, n \in \N$ such that $vx = vxx^m$
and $wx = wxx^n$. So $vx = vxx^{mn} = wxx^{mn} = wx$.

(2)\,$\Rightarrow$\,(3): The algebra $\mathbf F_0$ fails the
quasi-equation, as $qaa = 0 = raa$ but $qa = r \ne 0 = ra$. Now
let $m \in \N$. Then $\mathbf F_m$ fails the quasi-equation, as
there exists $k \in \{1,\dotsc,m\}$ such that $qaa = s_1 =
s_kaa$ but $qa = r \ne s_ka$.

(3)\,$\Rightarrow$\,(1): Assume that $a$ does not act as whiskery cycles. Then
there is $q \in Q$ such that $qa \ne qa^{n+1}$, for all $n \in \N$. So $qa \ne
0$. If there is some $k \in \N$ such that $qa^k = 0$, then $\mathbf F_0$ embeds
into~$\M$. Otherwise, since $\M$ is finite, there is some $m \in \N$ such that
$\mathbf F_m$ embeds into~$\M$.
\end{proof}

The next theorem tells us that, if a finite automatic algebra is dualizable,
then every letter must act as whiskery cycles.

\begin{theorem}\label{thm:wc}
Let $\M$ be a finite automatic algebra and let $a \in \Sigma$.
If\/ $a$ does not act as whiskery cycles, then $\M$ is
inherently non-dualizable.
\end{theorem}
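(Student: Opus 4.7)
The plan is to apply the Inherent Non-dualizability Lemma~\ref{lem:IND}. By Lemma~\ref{lem:wc}, the hypothesis that $a$ does not act as whiskery cycles implies that some $\mathbf F_m$ (with $m \in \N \cup \{0\}$) embeds into~$\M$. Since inherent non-dualizability is upward closed---any finite algebra containing $\M$ as a subalgebra also contains $\mathbf F_m$, and is therefore non-dualizable---it suffices to prove that each $\mathbf F_m$ is itself inherently non-dualizable.

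To do this, I would apply Lemma~\ref{lem:IND} to $\mathbf F_m$ with $I = \N$, taking $A_0 = \{u_n : n \in \N\} \subseteq \mathbf F_m^\N$, where each $u_n$ carries a distinguished marker at coordinate~$n$. For $m = 0$ a natural first attempt is $u_n = q \ov{r}{n}$; for $m \ge 1$ one replaces the background value $r$ by a state from the cycle so that repeated multiplication by $\const a$ does not immediately destroy the marker. The subalgebra $\A$ is generated by $A_0$ together with $\const a$ and whatever auxiliary elements are needed to couple distinct $u_n$'s through multiplication. The ghost element $g$ is then forced by the lemma to be a constant map (for instance $g = \const r$ in the $m=0$ case), and condition~(2) is verified by enumerating $\A$ and checking that no such constant belongs to it.

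The hard part is condition~(1), which demands that for every finite-index congruence $\theta$ on~$\A$, the restriction $\theta\rest{A_0}$ has a unique block of size greater than~$\mu(n)$. The obstacle is that automatic algebras have very sparse multiplication---most products equal~$0$---which \emph{a priori} admits ``parity-style'' congruences that split $A_0$ into two or more infinite blocks by identifying $u_n \sim u_{n'}$ whenever $n \equiv n' \pmod{k}$ for some $k \ge 2$. Ruling these out is the technical heart of the argument: the auxiliary generators of $\A$ must be arranged so that any identification $u_n \theta u_{n'}$ propagates, through successive multiplications, to identifications of witness elements indexed by enough other values of $n$ that $A_0$ (modulo a small exceptional set) collapses into a single $\theta$-block. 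The simultaneous demands of forcing this propagation while keeping $g \notin \A$ form the crux of the proof.
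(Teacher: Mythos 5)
Your framing is right: reduce via Lemma~\ref{lem:wc} to showing each $\mathbf F_m$ is inherently non-dualizable, then attack $\mathbf F_m$ with Lemma~\ref{lem:IND}. But the proposal stops exactly where the proof starts. You declare condition~(1) ``the technical heart'' and then describe it only qualitatively; no construction is actually carried out, and the one concrete guess you make does not line up. With $u_n = q\ov rn$ the ghost element is $g = \const q$ (each $\ker(\pi_i)\rest{A_0}$ has its unique infinite block consisting of the $u_n$ with $n\ne i$, which all have value $q$ at coordinate $i$), not $\const r$; and, more importantly, $A_0$ together with $\const a$ gives you no mechanism to couple distinct indices. After one multiplication by $\const a$ you land on $r\ov 0n$, after two on $\const 0$, and there is nothing in the generated subalgebra that lets you trade a $\theta$-identification at coordinate $k$ for one at coordinate $j$.

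The paper's construction is built around that coupling. It takes $A_0 := \{\,0\ov r1\ov ri \mid i\ge 2\,\}$ with background $0$ (which is what makes $g = 0\ov r1\notin A$ easy to check), and uses as extra generators not $\const a$ but the two families $0\ov q1\ov qi\ov qj$ ($j>i\ge 2$) and $a\ov 0i$ ($i\ge 2$). The point is the identity
\[
0\ov r1\ov rj \;=\; 0\ov q1\ov qj\ov qk \cdot a\ov 0k
\]
which, once pigeonhole on $\{a\ov 0k\}$ yields $a\ov 0k\equiv_\theta a\ov 0\ell$, propagates to $0\ov r1\ov rj \equiv_\theta 0\ov r1\ov rj\ov rk$, and then by symmetry to $0\ov r1\ov rj\equiv_\theta 0\ov r1\ov rk$. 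This is the step your proposal gestures at but never produces; without a specific $A_0$, a specific generating set, and the specific algebraic identity that forces the merge, there is no proof. The gap is genuine.
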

\begin{proof}
Fix $m \in \N \cup \{0\}$. By Lemma~\ref{lem:wc}, (3)\,$\Rightarrow$\,(1), it
suffices to prove that the automatic algebra $\mathbf F_m = \langle Q \cup
\Sigma \cup \{0\}; \cdot \rangle$ in Figure~\ref{fig:fwc} is inherently
non-dualizable, where $Q = \{q, r, s_1, s_2, \dotsc, s_m \}$ and $\Sigma =
\{a\}$. (If $m = 0$, then $Q = \{q,r\}$.)

We will use Lemma~\ref{lem:IND} with $\mu \colon \N \to \N$
given by $\mu(n) := n$. Using Notation~\ref{not:seq}, define
$A_0, B \subseteq (F_m)^\N$ by
\[
A_0 := \{\, 0\ov r1\ov ri \mid i \ge 2 \,\} \quad\text{and} \quad B := \{\, 0\ov q1\ov qi\ov qj \mid j > i \ge 2 \,\} \cup \{\, a\ov 0i \mid i \ge 2 \,\},
\]
and define $A := \sg {(\mathbf F_m)^\N}{A_0 \cup B}$.
Condition~\ref{lem:IND}(2) holds, as $g = 0\ov r1$ and
\[
A \subseteq A_0 \cup B \cup \{\, 0\ov r1\ov ri\ov rj \mid j > i \ge 2 \,\} \cup \{0, s_1, s_2, \dotsc, s_m\}^\N.
\]
It remains to establish condition~\ref{lem:IND}(1).

Let $n \in \N$ and let $\theta$ be a congruence on $\A$ of
index at most~$n$. We want to show that $\theta \rest {A_0}$
has a unique block of size greater than~$n$. So consider
disjoint subsets $J$ and $K$ of $\N\comp \{1\}$ with $\abs J =
\abs K = n + 1$. Suppose that each of the two subsets $\{\,
0\ov r1\ov rj \mid j \in J \,\}$ and $\{\, 0\ov r1\ov rk \mid k
\in K \,\}$ of $A_0$ is contained in a block of~$\theta$. It
now suffices to prove that $\{\, 0\ov r1\ov ri \mid i \in J
\cup K \,\}$ is contained in a block of~$\theta$.

The subsets $\{\,a\ov 0j \mid j \in J\,\}$ and $\{\,a\ov 0k \mid k \in K\,\}$
of~$B$ each have size $n + 1$. Since $\theta$ is of index at most~$n$, there
must be distinct $i,j \in J$ and distinct $k,\ell \in K$ such that $a\ov 0i
\equiv_\theta a\ov 0j$ and $a\ov 0k \equiv_\theta a\ov 0\ell$. We now calculate
\[
0\ov r1\ov rj = 0\ov q1\ov qj\ov qk \cdot a\ov 0k
 \equiv_\theta 0\ov q1\ov qj\ov qk \cdot a\ov 0\ell
 = 0\ov r1\ov rj\ov rk
\]
in $\A$. By symmetry, we also have $0\ov r1\ov rk \equiv_\theta
0\ov r1\ov rj\ov rk$. Thus $0\ov r1\ov rj \equiv_\theta 0\ov
r1\ov rk$, and therefore the subset $\{\, 0\ov r1\ov ri \mid i
\in J \cup K \,\}$ is contained in a block of~$\theta$. We have
shown that condition~\ref{lem:IND}(1) holds. Hence $\mathbf
F_m$ is inherently non-dualizable.
\end{proof}

\begin{remark}
The automatic algebra $\mathbf F_0$ is a $3$-nilpotent semigroup, and is
therefore also covered by M.~Jackson's general result~\cite{J:sgps} that
all finite proper $3$-nilpotent semigroups are inherently non-dualizable.
\end{remark}

While having whiskery cycles is necessary for the dualizability of an
automatic algebra, we will see in Example~\ref{ex:rankill} that
it is not sufficient. However, we show in
Section~\ref{sec:class} that a finite automatic algebra with
$\abs \Sigma = 1$ is dualizable if and only if its single
letter acts as whiskery cycles.

The next theorem provides another general necessary condition
for dualizability, which will help with the classification of
$2$-state automatic algebras in Section~\ref{sec:class}.

\begin{theorem}\label{thm:pcomm}
If\/ a finite automatic algebra $\M$ fails the quasi-equation
\[ \tag*{$(\ast)_\phi$}
xy_1y_2\dots y_m
\approx 0 \implies xy_{\varphi(1)}y_{\varphi(2)}\dots
y_{\varphi(m)} \approx 0,
\]
for some $m \in \N$ and some permutation~$\varphi$ of\/
$\{1,2,\dotsc,m\}$, then\/ $\M$ is inherently non-dualizable.
\end{theorem}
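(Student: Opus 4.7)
The plan is to mimic the proof of Theorem~\ref{thm:wc} by applying the Inherent Non-dualizability Lemma. Fix a witness to the failure: $q \in Q$, letters $a_1, \ldots, a_m \in \Sigma$, and a nonzero state $p \in Q$ with $qa_1 \cdots a_m = 0$ and $qa_{\phi(1)} \cdots a_{\phi(m)} = p$. (Since the right-hand evaluation is nonzero, $q$ must lie in $Q$ and every $a_i$ in $\Sigma$.) I would choose this witness with $m$ as small as possible across all failing $(\ast)_\psi$; this guarantees that every prefix $r_k := qa_{\phi(1)} \cdots a_{\phi(k)}$ along the ``good'' path $q \to r_1 \to \cdots \to r_m = p$ is a nonzero state, and will be exploited at the end to rule out accidental shortcuts.

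For the construction, take $I := \N$ and $\mu(n) := n$. Set $A_0 := \set{0 \ov p 1 \ov p i}{$i \ge 2$}$ and let $\A := \sg{\M^\N}{B}$, where the generating set $B$ comprises state-skeletons $\psi_{j,k} := 0 \ov q 1 \ov q j \ov q k$ for distinct $j, k \ge 2$, letter-modulators $\chi_\ell := a_{\phi(1)} \ov 0 \ell$ for $\ell \ge 2$, and the constants $\const{a_{\phi(2)}}, \const{a_{\phi(3)}}, \ldots, \const{a_{\phi(m)}}$. A coordinate-wise check gives the key identity
\[
\psi_{j,k} \cdot \chi_\ell \cdot \const{a_{\phi(2)}} \cdots \const{a_{\phi(m)}} =
\begin{cases}
 0 \ov p 1 \ov p j & \text{if $\ell = k$,}\\
 0 \ov p 1 \ov p k & \text{if $\ell = j$,}\\
 0 \ov p 1 \ov p j \ov p k & \text{if $\ell \notin \{1, j, k\}$,}
\end{cases}
\]
since at each position in $\{1, j, k\} \comp \{\ell\}$ the sequence of letters applied to $q$ is exactly $a_{\phi(1)}, a_{\phi(2)}, \ldots, a_{\phi(m)}$, which evaluates to $p$. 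The first two cases show that $A_0 \subseteq A$. For condition~(1) of the lemma, given a congruence $\theta$ of index at most $n$ and disjoint $J, K \subseteq \N \comp \{1\}$ with $\abs J = \abs K = n+1$, pigeonhole on $\set{\chi_j}{$j \in J$}$ produces distinct $j_1, j_2 \in J$ with $\chi_{j_1} \equiv_\theta \chi_{j_2}$, and similarly distinct $k_1, k_2 \in K$ with $\chi_{k_1} \equiv_\theta \chi_{k_2}$; applying the identity with the skeleton $\psi_{j_1, k_1}$ yields the chain $0\ov p1\ov p{j_1} \equiv_\theta 0\ov p1\ov p{j_1}\ov p{k_1} \equiv_\theta 0\ov p1\ov p{k_1}$, forcing the $J$- and $K$-blocks within $A_0$ to collapse as in Theorem~\ref{thm:wc}.

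The main obstacle is condition~(2): showing that the ghost $g := 0 \ov p 1$ is not in $\A$. Because any groupoid term not bracketed from the left is constantly zero in an automatic algebra, every non-$\const 0$ element of $\A$ is either a generator or a left-bracketed product $\psi_{j,k} \cdot y_1 \cdots y_n$ with each $y_i \in \set{\chi_\ell}{$\ell \ge 2$} \cup \{\const{a_{\phi(2)}}, \ldots, \const{a_{\phi(m)}}\}$; such a product has support contained in $\{1, j, k\}$ and, at each supported coordinate, evaluates $q$ along some word in $\{a_{\phi(1)}, a_{\phi(2)}, \ldots, a_{\phi(m)}, 0\}^*$. The danger is that combining several $\chi$-modulators could trace a non-canonical word to $p$ at coordinate~$1$ while hitting $0$ at coordinates $j$ and $k$; the prototypical bad case is $r_1 = qa_{\phi(1)}$ admitting an $a_{\phi(1)}$-loop, so that $qa_{\phi(1)}^2 a_{\phi(2)} \cdots a_{\phi(m)} = p$. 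The key remaining step is to exploit the minimality of~$m$ to rule out such accidental shortcuts in the $\phi$-path, after which a coordinate-wise case analysis shows that every product in $\A$ taking value $p$ at coordinate~$1$ also takes value $p$ at $j$ or $k$, giving $g \notin \A$.
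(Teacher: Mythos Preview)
Your approach has a genuine gap in condition~(2): minimality of~$m$ does \emph{not} rule out the ``accidental shortcuts'' you identify.  Here is a concrete counterexample.  Take $Q=\{q,r,s,p\}$, $\Sigma=\{a,b\}$, with
\[
qa=s,\quad qb=r,\quad ra=p,\quad rb=r,\quad sa=s,\quad sb=0,\quad pa=p,\quad pb=0.
\]
Then $qab=0$ and $qba=p$, so $(\ast)_\phi$ fails with $m=2$ and $\phi=(1\ 2)$; since $m=1$ can never fail, this $m$ is minimal.  In your construction, $a_{\phi(1)}=b$, $a_{\phi(2)}=a$, and
\[
\psi_{j,k}\cdot\chi_j\cdot\chi_k\cdot\const a
\]
evaluates at coordinate~$1$ to $q\cdot b\cdot b\cdot a = r\cdot a = p$, and at coordinates $j,k$ to~$0$.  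Hence the ghost $g=0\ov p1$ lies in~$\A$, and your argument collapses.  (Note that this $\M$ even has whiskery cycles, so that assumption alone would not save you.)

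The paper's proof is organised quite differently and sidesteps this difficulty entirely.  First it invokes Theorem~\ref{thm:wc} to assume whiskery cycles, and it argues by induction on~$m$: one assumes that $C_{m+1}$ holds and $C_{m+2}$ fails, then reduces to the transposition $\phi=(1\ 2)$.  The whiskery-cycles hypothesis and the inductive hypothesis $C_{m+1}$ together yield a power $p\in\N$ and a state~$s$ satisfying three carefully chosen equations.  Crucially, instead of taking $\A$ to be a generated subalgebra, the paper defines
\[
A:=\{\,v\in M^{\N}\mid (\exists i)\ v(i)=0\,\}\cup\Sigma^{\N},
\]
which is visibly a subuniverse of~$\M^{\N}$; the ghost is $g=\const r$ and $g\notin A$ is then immediate, with no delicate analysis of products required.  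The three equations are used only in the congruence calculation for condition~(1), with $\mu(n)=n^2$.  If you want to repair your argument, you will need either a direct description of a subalgebra containing~$A_0$ but not~$g$, or an inductive set-up of this kind; minimality of a single witness is not enough.
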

\goodbreak
\begin{proof}
For each $m \in \N$, define the condition $C_m$ on $\M$ as
follows:
\begin{itemize}
 \item
the quasi-equation $(\ast)_\phi$ holds in~$\M$, for all permutations~$\varphi$ of $\{1,2,\dotsc,m\}$.
\end{itemize}
Then $C_1$ holds trivially. Now let $m \in \N \cup \{0\}$ and assume that
$C_{m+1}$ holds but $C_{m+2}$ fails. We will prove that $\M$ is inherently
non-dualizable. By Theorem~\ref{thm:wc}, we can assume that every letter of
$\M$ acts as whiskery cycles.

Each permutation of $\{1,2, \dotsc, m+2\}$ can be obtained via
composition from the transposition $(1 \ 2)$ and the cycle $(1
\ 2 \ \dots \ m+2)$. Since $C_{m+2}$ fails, it must
fail with $\varphi = (1 \ 2)$ or $\varphi = (1 \ 2 \ \dots \
m+2)$. We consider these two cases separately.

\smallskip
\emph{Case 1: }$\varphi = (1 \ 2)$. There exist $q \in Q$ and
$a,b,c_1,\dotsc,c_m \in \Sigma$ such that
\[
qabc_1\dotsm c_m = 0 \quad\text{and}\quad r := qbac_1\dotsm c_m \in Q.\label{eq:1}
\]
We start by finding $p \in \N$ and a state $s \in Q$ such that
\begin{enumerate}
 \item $qbb^pac_1\dotsm c_m = r$,
 \item $saa^pac_1\dotsm c_m = r$, and
 \item $qab^pac_1\dotsm c_m = 0$.
\end{enumerate}
We are assuming that each letter of $\M$ acts as whiskery
cycles. So we can fix  $p \in \N$ such that $qb = qbb^p$, and therefore (1)
holds. We must have $qba \in Q$, by the definition of~$r$.
Since $a$ acts as whiskery cycles, it follows that $qba =
saa^pa$, for some $s \in Q$. So (2) holds.

Now suppose, by way of contradiction, that (3) fails. Then
$qab^p \in Q$ and so we can define the states $s_0,s_1,\dotsc,
s_p \in Q$ by
\[
q \overset{a}\to s_0 \overset{b}\to s_1 \overset{b}\to s_2 \overset{b}\to \dotsb \overset{b}\to s_p.
\]
We have $s_pac_1\dotsm c_m = qab^pac_1\dotsm c_m \ne 0$. Since condition
$C_{m+1}$ holds, this implies that $s_pc_1\dotsm c_ma \ne 0$ and so
$s_pc_1\dotsm c_m \ne 0$. Therefore $s_{p-1}bc_1\dotsm c_m \ne 0$, and using
$C_{m+1}$ again it follows that $s_{p-1}c_1\dotsm c_m \ne 0$. Continuing to
argue in this way, we will get $s_1c_1\dotsm c_m \ne 0$. But $s_1 = qab$, by
definition, and so this contradicts our original assumption that $qabc_1\dots
c_m = 0$. Thus (3) holds.

We will prove that $\M$ is inherently non-dualizable using
Lemma~\ref{lem:IND} with the map $\mu \colon \N \to \N$ given
by $\mu(n) := n^2$. Define the sets
\[
A_0 := \{\, r\ov 0i \mid i \in \N \,\} \subseteq M^\N
 \quad\text{and}\quad
A := \{\, v \in M^\N \mid (\exists i) \ v(i) = 0\,\} \cup \Sigma^\N.
\]
Clearly $A$ is a subuniverse of $\M^\N$.
Condition~\ref{lem:IND}(2) holds, as $g = \const r \notin A$.

To check condition~\ref{lem:IND}(1), let $n \in \N$ and let $\theta$ be a
congruence on $\A$ of index at most~$n$. Let $J$ and $K$ be disjoint subsets of
$\N$ with $\abs J = \abs K = n^2 + 1$, and assume that each of the subsets
$\{\, r\ov 0j \mid j \in J \,\}$ and $\{\, r\ov 0k \mid k \in K \,\}$ of $A_0$
is contained in a block of~$\theta$. We want to prove that $\{\, r\ov 0i \mid i
\in J \cup K \,\}$ is contained in a block of~$\theta$.

We consider four subsets of $A$, each of size $n^2 + 1$:
\[
\{\,b\ov 0j \mid j \in J\,\},\quad \{\,b\ov aj \mid j \in J\,\},\quad
\{\,b\ov 0k \mid k \in K\,\},\quad \{\,b\ov ak \mid k \in K\,\}.
\]
(Note that the way $a$ and $b$ were originally chosen ensures they are
distinct.) As $\theta$ is of index at most~$n$, there are distinct $i,j \in J$
and distinct $k,\ell \in K$ such that the following relations hold:
\[
b\ov 0i\equiv_\theta b\ov 0j,\quad b\ov ai\equiv_\theta b\ov aj,\quad
b\ov 0k\equiv_\theta b\ov 0\ell,\quad b\ov ak\equiv_\theta b\ov a\ell.
\]

Define $t := sba^pac_1\dots c_m \in Q\cup \{0\}$. Using equations~(1)--(3), we
calculate
\begin{align*}
r \ov 0i \
 &=\phantom{_\theta} q\ov 0i\ov sk \cdot b\ov ak \cdot (b\ov ak)^p \cdot
  \const a \cdot \const c_1 \cdot \dots \cdot \const c_m\\
 &\equiv_\theta q\ov 0i\ov sk \cdot b\ov a\ell \cdot (b\ov ak)^p \cdot
  \const a \cdot \const c_1 \cdot \dots \cdot \const c_m
 = r \ov 0i\ov tk\ov 0\ell\\
 &=\phantom{_\theta} q\ov 0i\ov sk\ov 0\ell \cdot b\ov 0\ell \cdot (b\ov ak)^p \cdot \const a \cdot \const c_1 \cdot \dots \cdot \const c_m\\
 &\equiv_\theta q\ov 0i\ov sk\ov 0\ell \cdot b\ov 0k \cdot (b\ov ak)^p \cdot \const a \cdot \const c_1 \cdot \dots \cdot \const c_m
 = r \ov 0i\ov 0k\ov 0\ell\\
 &=\phantom{_\theta} q\ov 0i\ov 0k\ov 0\ell \cdot b\ov 0i \cdot (\const b)^p \cdot \const a \cdot \const c_1 \cdot \dots \cdot \const c_m\\
 &\equiv_\theta q\ov 0i\ov 0k\ov 0\ell \cdot b\ov 0j \cdot (\const b)^p \cdot \const a \cdot \const c_1 \cdot \dots \cdot \const c_m
 = r \ov 0i\ov 0j\ov 0k\ov 0\ell
\end{align*}
in $\A$. Using symmetry, we obtain $r \ov 0i \equiv_\theta r
\ov 0i\ov 0j\ov 0k\ov 0\ell \equiv_\theta r \ov 0k$. So
condition~\ref{lem:IND}(1) holds, whence $\M$ is inherently
non-dualizable.

\smallskip
\emph{Case 2: }$\varphi = (1 \ 2 \ \dots \ m+2)$. There are $q \in Q$ and
$a,c_0, c_1, \dotsc, c_m \in \Sigma$ such that $qac_0c_1 \dotsm c_m = 0$ and
$qc_0c_1 \dotsm c_ma \ne 0$. So $qc_0ac_1 \dotsm c_m \ne 0$, as $C_{m+1}$
holds. Thus $\M$ also fails $C_{m+2}$ via the transposition $(1 \ 2)$. So Case
1 applies, whence $\M$ is inherently non-dualizable.
\end{proof}

We can convert the syntactic condition of the previous result
into more concrete conditions. For an automatic algebra $\M$
and for $a \in \Sigma$, define the \emph{domain} of $a$ by
$\dom a := \{\, q \in Q \mid qa \ne 0\,\}$, define the
\emph{range} of $a$ by $\ran a := \{\,qa \mid q \in \dom a\,\}$
and define the set of \emph{kill states} for $a$ by $\ks a := Q
\comp \dom a$.

In the following result, we use the standard notation $\Sigma^*$ for the set
of all words $a_1a_2\dots a_n$ in the alphabet $\Sigma$, where $n \in \N \cup \{0\}$.

\begin{corollary}\label{lem:rankill}
A finite automatic algebra\/ $\M$ is inherently non-dualizable
if there exists $a \in \Sigma$ such that one of the following
conditions holds:
\begin{enumerate}
 \item there is a path from the kill states of\/ $a$ to the
     domain of\/~$a$, that is, there are\/ $q \in \ks a$
     and\/ $w \in \Sigma^*$ such that\/ $qw \in \dom a$;
 \item there is a path from the range of\/ $a$ to the kill
     states of\/~$a$, that is, there are\/ $q \in \ran a$
     and\/ $w \in \Sigma^*$ such that\/ $qw \in \ks a$.
\end{enumerate}
\end{corollary}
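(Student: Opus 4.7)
I would invoke Theorem~\ref{thm:pcomm} in each case by exhibiting an $m$ and a permutation $\varphi$ of $\{1,\ldots,m\}$ for which $(\ast)_\varphi$ fails in $\M$. For condition~(1) the witness is essentially free: writing $w = c_1 \cdots c_k$, I would take $x := q$ and $(y_1,\ldots,y_{k+1}) := (a,c_1,\ldots,c_k)$. Then $xy_1\cdots y_{k+1} = (qa)c_1\cdots c_k = 0$ since $q \in \ks a$, while under the cyclic shift $\varphi = (1\;2\;\cdots\;k+1)$ the permuted product is $qc_1\cdots c_k \cdot a = (qw)\cdot a \ne 0$ by hypothesis.

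For condition~(2) the plan is more delicate and will use Theorem~\ref{thm:wc} as a first reduction. Write $q = pa$ for some $p \in \dom a$ and $w = w_1 \cdots w_m$. If $a$ fails to act as whiskery cycles, Theorem~\ref{thm:wc} already gives inherent non-dualizability; otherwise fix $n \ge 1$ with $pa = pa^{n+1}$. This also forces $m \ge 1$, since $m = 0$ would give $pa \cdot a = 0$, incompatible with $pa = pa^{n+1} \ne 0$. Now I would set $x := p$ and pad $n$ extra copies of $a$ at the end:
\[
(y_1,y_2,\ldots,y_{m+n+1}) := \bigl(a,\,w_1,\ldots,w_m,\,\underbrace{a,a,\ldots,a}_{n}\bigr).
\]
Then $xy_1\cdots y_{m+n+1} = paw \cdot a^n = 0$ because $paw \in \ks a$. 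Let $\varphi$ be any permutation of $\{1,\ldots,m+n+1\}$ that moves all $n+1$ copies of $a$ to positions $1,\ldots,n+1$ while preserving the relative order of $w_1,\ldots,w_m$; then the permuted product is $p \cdot a^{n+1} \cdot w = pa \cdot w = paw \ne 0$, by the whiskery-cycle identity $pa^{n+1} = pa$. So $(\ast)_\varphi$ fails and Theorem~\ref{thm:pcomm} applies.

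The main ingenuity lies in the padding of $n$ extra $a$'s for~(2): without it the natural transposition attempts founder on the fact that quantities like $pa^2 w$ and $pwaa$ need not be nonzero. The whiskery-cycle exponent $n$ is exactly what is needed so that the $a$-cycle structure at $pa$ absorbs the extra $a$'s and returns the reading head to $pa$, recovering the nonzero state $paw$. The rest of the argument is mechanical bookkeeping around the bracket-from-the-left convention.
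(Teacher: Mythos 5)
Your proof is correct and takes essentially the same route as the paper: both cases reduce to Theorem~\ref{thm:pcomm} by exhibiting explicit failures of $(\ast)_\varphi$, with case (2) first invoking Theorem~\ref{thm:wc} to get the whiskery-cycle exponent used to pad with copies of $a$. The paper's version of (2) is marginally leaner---it works directly with $q = qa^n$ and the products $qwa^n = 0$ versus $qa^nw = qw \ne 0$, without introducing the preimage $p$ satisfying $q = pa$---but the key padding idea is identical.
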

\begin{proof}
(1): Assume that $q \in \ks a$ and $w \in \Sigma^*$ with $qw
\in \dom a$. Then $qaw = 0$ but $qwa \ne 0$. So $\M$ is
inherently non-dualizable by Theorem~\ref{thm:pcomm}.

(2): Assume $q \in \ran a$ and $w \in \Sigma^*$ with $qw \in \ks a$. Then $qw
\ne 0$ and $qwa = 0$. By Theorem~\ref{thm:wc}, we can assume $a$ acts as
whiskery cycles. As $q \in \ran a$, this implies that $q = qa^n$, for some $n
\in \N$. So $qa^nw = qw \ne 0$. But $qwa = 0$ and therefore $qwa^n = 0$. Thus
$\M$ is inherently non-dualizable, by Theorem~\ref{thm:pcomm}.
\end{proof}

\begin{chatexample}\label{ex:rankill}
Using the previous corollary, it is easy to check that the three automatic
algebras in Figure~\ref{fig:rankill} are inherently non-dualizable: both
$\mathbf N_1$ and $\mathbf N_2$ have $q \in \ks b$ but $qa \in \dom b$, and so
fail condition~\ref{lem:rankill}(1); the algebra $\mathbf N_3$ has $q \in \ran
b$ but $qa \in \ks b$, and so fails condition~\ref{lem:rankill}(2). We use
these examples in our classification of $2$-state automatic algebras in
Section~\ref{sec:class}.
\end{chatexample}

\begin{figure}
\begin{center}
\begin{tikzpicture}
   \begin{scope}
      \node at (-1,0) {$\mathbf N_1$};
      \node[state] (1) at (0,0) {$q$};
      \node[state] (2) at (1,0) {$r$};
      \path (1) edge [->, bend left] node {$a$} (2);
      \path (2) edge [loop above, very near end, right] node {$a$} (2);
      \path (2) edge [loop below, very near start, right] node {$b$} (2);
   \end{scope}
   \begin{scope}[xshift=4.5cm]
      \node at (-1,0) {$\mathbf N_2$};
      \node[state] (1) at (0,0) {$q$};
      \node[state] (2) at (1,0) {$r$};
      \path (1) edge [<->, bend left] node {$a$} (2);
      \path (2) edge [loop below, very near start, right] node {$b$} (2);
   \end{scope}
   \begin{scope}[xshift=9cm]
      \node at (-1,0) {$\mathbf N_3$};
      \node[state] (1) at (0,0) {$q$};
      \node[state] (2) at (1,0) {$r$};
      \path (1) edge [->, bend left] node {$a$} (2);
      \path (2) edge [loop above, very near end, right] node {$a$} (2);
      \path (1) edge [loop below, very near end, left] node {$b$} (1);
   \end{scope}
\end{tikzpicture}
\end{center}
\caption{Some non-dualizable $2$-state automatic algebras}\label{fig:rankill}
\end{figure}
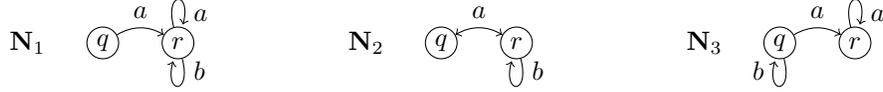

\begin{chatexample}\label{ex:all4}
We have now covered three of the four automatic algebras from the introduction:
the ones based on $B$ and $R$ are non-dualizable by Theorem~\ref{thm:wc};
the one based on $L_3^*$ is non-dualizable by Corollary~\ref{lem:rankill},
as there is a path from $s \in \ran b$ to $q \in \ks b$.
For completeness, we shall check that the automatic algebra based on $L$ is also non-dualizable.

%
%

Consider the subalgebra $\M = \langle \{ q, r, s \} \cup \{a, c \} \cup \{0\}; \cdot \rangle$ of Lyndon's automatic algebra.
We will use Lemma~\ref{lem:IND} with $\mu(n) := n$.
Define $A_0 \subseteq A \subseteq M^\N$ by
\[
A_0 := \{\, q\ov si \mid i \in \N \,\} \quad\text{and}\quad A:= (Q^{\N}\comp \{q,r\}^{\N}) \cup \Sigma^{\N} \cup \{\underline{0}\}.
\]
Then $A$ forms a subalgebra $\A$ of $\M^\N$, and condition~\ref{lem:IND}(2) holds as $g = \underline{q} \not\in A$.

To see that condition~\ref{lem:IND}(1) holds, let $n \in \N$ and let $\theta$
be a congruence on $\A$ of index at most~$n$. Let $J$ and $K$ be disjoint subsets
of $\N$ of size $n + 1$, and assume that the sets $\{\, q\ov
sj \mid j \in J \,\}$ and $\{\, q\ov sk \mid k \in K \,\}$ are
each contained in a block of~$\theta$. As $\theta$ is of index at most $n$, there are distinct $i,j \in J$ and distinct
$k,\ell \in K$ such that $c\ov ai \equiv_{\theta} c \ov aj$ and $c\ov ak
\equiv_{\theta} c \ov a\ell$. Therefore
\[
q \ov si = q \ov si \ov rk \cdot c \ov ak \equiv_{\theta} q \ov si \ov rk \cdot c \ov a\ell = q \ov si \ov sk.
\]
By symmetry, we get $q \ov si  \equiv_{\theta} q \ov si \ov sk \equiv_{\theta}
q \ov sk $. So $\M$ is inherently non-dualizable.
\end{chatexample}

\section{Dualizability toolkit}\label{sec:toolkit}

In this section, we give some general definitions and results that will be
helpful in our dualizability proofs in the following two sections. We do not
need to define \emph{dualizable} in full generality. Instead we define a
simpler sufficient condition.


\begin{definition}
Fix a finite algebra~$\M$. Consider a function $f \colon \hom(\A,\M) \to M$,
where $\A$ is any algebra of the same type as~$\M$.
\begin{itemize}
 \item The function $f$ is called an \emph{evaluation} if there exists $a
     \in A$ with $f(x) = x(a)$, for all $x \colon \A \to \M$.
 \item For $k \in \N$, the function $f$ is \emph{$k$-locally an evaluation}
     if its restriction $f \rest X$ agrees with an evaluation, for all $X
     \subseteq \hom(\A,\M)$ with $\abs X \le k$.
\end{itemize}
Now, for $k \in \N$, we say that $\M$ is
\emph{$k$-dualizable} provided the following holds:
\begin{itemize}
 \item for each finite algebra $\A \in \ISP\M$ and each
     function $f \colon \hom(\A,\M) \to M$, if $f$ is
     $k$-locally an evaluation, then $f$ is an evaluation.
\end{itemize}
\end{definition}

In fact, this definition uses the Duality Compactness
Theorem~\cite{dct,rank,CISSW}; see also~\cite[2.2.11]{NDftWA}. In this paper,
we always establish that a finite automatic algebra is dualizable by
showing that it is $k$-dualizable, for some $k \in \N$. But there are
dualizable algebras that are not $k$-dualizable, for any $k \in \N$~\cite{unc}.

\begin{definition}
Let $\M$ be an algebra and let $k \in \N$. A $k$-ary relation $r$ on $M$ is
\emph{compatible} with $\M$ if it is a subuniverse of $\M^k$. A partial
operation on $M$ is \emph{compatible} with $\M$ if its graph is a compatible
relation on~$\M$ (or, equivalently, its domain is a compatible relation and it
is a homomorphism).
\end{definition}

Note that relations on $M$ can be interpreted pointwise on the subset
$\hom(\A,\M)$ of~$M^A$, and $\hom(\A,\M)$ is closed under every compatible
partial operation on~$\M$. We require the following easy but useful lemma
(see~\cite[10.5.1]{NDftWA} or \cite[1.4.4]{DUAaB}).

\begin{lemma}[Preservation]\label{lem:pres}
Let $k \in \N$ and let $f \colon \hom(\A,\M) \to M$, where $\M$ is a finite algebra
and\/ $\A \in \ISP\M$. Then $f$ is $k$-locally an evaluation if and only if\/ $f$
preserves every $k$-ary compatible relation on\/~$\M$.
\end{lemma}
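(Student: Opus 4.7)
The plan is to unpack both sides of the equivalence in terms of a natural ``evaluation relation'' and then verify each direction by a one-line computation. Recall that a $k$-ary relation $r$ on $M$, interpreted pointwise on $\hom(\A,\M)$, holds of $x_1,\dots,x_k$ when $(x_1(a),\dots,x_k(a)) \in r$ for every $a \in A$; thus ``$f$ preserves $r$'' means that whenever this pointwise condition holds we also have $(f(x_1),\dots,f(x_k)) \in r$.

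For the forward direction, I would fix a compatible $k$-ary relation $r$ and homomorphisms $x_1,\dots,x_k \colon \A \to \M$ with $(x_1(a),\dots,x_k(a)) \in r$ for all $a \in A$. Applying the hypothesis that $f$ is $k$-locally an evaluation to the set $X := \{x_1,\dots,x_k\}$ produces some $a \in A$ with $f(x_i) = x_i(a)$ for every $i \le k$, and then $(f(x_1),\dots,f(x_k)) = (x_1(a),\dots,x_k(a)) \in r$. So $f$ preserves $r$.

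For the converse, given $X = \{x_1,\dots,x_k\} \subseteq \hom(\A,\M)$, the key object is the relation
\[
r := \{\,(x_1(a),\dots,x_k(a)) \mid a \in A\,\} \subseteq M^k.
\]
This is exactly the image of the homomorphism $\A \to \M^k$ defined by $a \mapsto (x_1(a),\dots,x_k(a))$, hence is a subuniverse of $\M^k$ and therefore a compatible $k$-ary relation on~$\M$. By construction $(x_1,\dots,x_k) \in r$ holds pointwise, so the preservation hypothesis yields $(f(x_1),\dots,f(x_k)) \in r$; unpacking, there exists $a \in A$ with $f(x_i) = x_i(a)$ for $i = 1,\dots,k$, which says precisely that $f\!\rest{X}$ agrees with evaluation at~$a$.

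The proof is largely definition-chasing, and I do not anticipate a genuine obstacle; the only subtlety worth highlighting is the construction in the converse, where one must recognize the image of the diagonal map $a \mapsto (x_1(a),\dots,x_k(a))$ as a compatible relation, so that the preservation hypothesis can be triggered to produce the required witness $a$.
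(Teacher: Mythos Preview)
Your argument is correct and is the standard proof of this lemma. The paper itself does not give a proof here; it simply cites the result as a well-known fact (referring to \cite[10.5.1]{NDftWA} and \cite[1.4.4]{DUAaB}), so your write-up in fact supplies what the paper omits.
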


We also use the fact that two different automatic algebras that generate the
same quasi-variety are either both dualizable or both not.

\begin{theorem}[Independence of the generator~\cite{DWind,Sind}]\label{thm:indep}
Let $\M$ and $\mathbf N$ be finite algebras and assume that
$\ISP\M = \ISP{\mathbf N}$. If\/ $\M$ is dualizable, then so
is~$\mathbf N$.
\end{theorem}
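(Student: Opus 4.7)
The plan is to recover this classical theorem using the preservation criterion (Lemma~\ref{lem:pres}) already in hand. Specifically, I would show that $k$-dualizability of $\M$ implies $k'$-dualizability of $\mathbf N$ for some $k'$ bounded in terms of $k$ and the sizes of certain mutual embeddings. Because both algebras are finite and $\ISP\M = \ISP{\mathbf N}$, standard quasi-variety arguments furnish finite-index embeddings $\iota \colon \mathbf N \hookrightarrow \M^I$ and $\jmath \colon \M \hookrightarrow \mathbf N^J$. Each such embedding converts compatible relations cleanly: a $k$-ary subalgebra of $\mathbf N^k$ becomes, via $\iota^k$, a $k\abs{I}$-ary subalgebra of $\M^{kI}$, and symmetrically for $\jmath$. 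In particular the images $\iota(N) \subseteq M^I$ and $\jmath(M) \subseteq N^J$, as well as the graph of the composite injection $\jmath^I \circ \iota \colon \mathbf N \hookrightarrow \mathbf N^{IJ}$, are themselves compatible relations on the respective algebras.

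Assume $\M$ is $k$-dualizable and let $\B \in \ISP{\mathbf N}$ be finite. Given $f \colon \hom(\B, \mathbf N) \to N$ that is $k'$-locally an evaluation, I would produce a companion map $\tilde f \colon \hom(\B, \M) \to M$ by the rule
\[
\jmath(\tilde f(y)) = \bigl(f(\pi_j \circ \jmath \circ y)\bigr)_{j \in J}.
\]
Well-definedness needs preservation by $f$ of the $\abs{J}$-ary compatible relation $\jmath(M)$; showing that $\tilde f$ is $k$-locally an evaluation on $\hom(\B, \M)$ uses preservation of the push-forward relations $\jmath^k(R) \subseteq N^{kJ}$ for each compatible $k$-ary $R$ on $\M$. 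The $k$-dualizability of $\M$ then delivers an element $b \in B$ with $\tilde f(y) = y(b)$ for every $\M$-homomorphism $y \colon \B \to \M$.

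The main obstacle is the final step: promoting this to $f(x) = x(b)$ for an arbitrary $x \colon \B \to \mathbf N$, since a generic $x$ need not arise as $\pi_j \circ \jmath \circ y$. Here I would feed the components $\pi_i \circ \iota \circ x \in \hom(\B, \M)$ into the formula to get $\tilde f(\pi_i \circ \iota \circ x) = \pi_i(\iota(x(b)))$, and then invoke preservation by $f$ of the $(1 + \abs{I}\abs{J})$-ary graph of $\jmath^I \circ \iota$; injectivity of this composite pins $f(x)$ down uniquely, forcing $f(x) = x(b)$. Choosing $k' := \max\bigl(\abs{J},\, k\abs{J},\, 1 + \abs{I}\abs{J}\bigr)$ ensures every preservation hypothesis invoked above falls within the single bound~$k'$. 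The remaining work is arity bookkeeping and keeping straight the interplay of the two embeddings, but no new conceptual idea is required beyond Lemma~\ref{lem:pres} and the observation that two finite algebras generating the same quasi-variety must embed into finite powers of each other.
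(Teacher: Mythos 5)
The paper cites this theorem from the literature (\cite{DWind,Sind}) without proof, so there is no in-paper argument to compare against; I can only assess your construction on its own terms. As far as it goes, it is correct: finite mutual embeddings $\iota \colon \mathbf N \hookrightarrow \M^I$ and $\jmath \colon \M \hookrightarrow \mathbf N^J$ do exist for finite algebras generating the same quasi-variety; the rule $\jmath(\tilde f(y)) = (f(\pi_j \circ \jmath \circ y))_{j \in J}$ gives a well-defined $\tilde f \colon \hom(\B,\M) \to M$ once $f$ preserves the $\abs J$-ary compatible relation $\jmath(M)$; pushing each $k$-ary compatible relation $R \le \M^k$ forward to the $k\abs J$-ary relation $\jmath^k(R) \le \mathbf N^{k\abs J}$ shows $\tilde f$ is $k$-locally an evaluation; and preservation of the graph of the injective homomorphism $\jmath^I \circ \iota \colon \mathbf N \to \mathbf N^{\abs I \abs J}$ indeed forces $f(x) = x(b)$. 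So you have genuinely proved: if $\M$ is $k$-dualizable then $\mathbf N$ is $k'$-dualizable with $k' = \max(k\abs J,\, 1 + \abs I \abs J)$.

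The gap is that this is not the stated theorem. The hypothesis is that $\M$ is \emph{dualizable}, and Section~\ref{sec:toolkit} explicitly warns (citing~\cite{unc}) that there exist dualizable finite algebras that are not $k$-dualizable for any $k$; the definition given in the paper is only a ``simpler sufficient condition.'' Your argument is therefore silent about a dualizable $\M$ that is not $k$-dualizable for any $k$, and there is no obvious way to patch it while staying inside the finite-arity preservation framework of Lemma~\ref{lem:pres}. The genuine proof in \cite{DWind} transfers an arbitrary dualizing alter ego on $\M$ (with no bound on the arity of its relations) to one on $\mathbf N$, using the same pair of embeddings but at the level of whole topological structures rather than individual finite-arity relations. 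That said, your special case covers every application of Theorem~\ref{thm:indep} made in this paper, since here dualizability is always established via $k$-dualizability; it just does not prove the theorem as stated.
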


\begin{remark}\label{rem:easy}
We can quickly eliminate some `trivial' cases from our study of automatic
algebras. If $Q = \emptyset$ or $\Sigma = \emptyset$, then the automatic
algebra $\M$ is a zero-semigroup and therefore dualizable (see~\cite[Exercise 3.7]{NDftWA}).
Also, since different automatic algebras that generate the same quasi-variety
are equivalent as far as dualizability is concerned, we can make the following
restrictions on the automatic algebras we consider.
\begin{enumerate}
 \item \emph{No `totally undefined' letters.} Assume $Q \ne \emptyset$ and
     there is $a \in \Sigma$ with $\dom a = \emptyset$. Then $\M$ generates
     the same quasi-variety as its subalgebra $\mathbf N$ with universe $N
     := M \comp \{a\}$. (To see this, choose $q \in Q$ and define the
     embedding $\phi \colon \M \to \mathbf N^2$ by $x \mapsto (x,0)$, for
     all $x \in N$, and $a \mapsto (0,q)$.)
 \item \emph{No `repeated' letters.} Assume there are distinct $a,b \in
     \Sigma$ such that $qa = qb$, for all $q \in Q$. Then $\M$ generates
     the same quasi-variety as its subalgebra on $N := M \comp \{a\}$.
     (Define $\phi \colon M \to N^2$ by $x \mapsto (x,0)$, for all $x \in
     N$, and $a \mapsto (b,b)$.)
 \item \emph{No `isolated' states.} Assume $\Sigma \ne \emptyset$ and $q
     \in Q$ with $q \notin \dom a \cup \ran a$, for all $a \in \Sigma$.
     Then $\M$ generates the same quasi-variety as its subalgebra on $N :=
     M \comp \{q\}$. (Choose $a \in \Sigma$ and define $\phi \colon M \to
     N^2$ by $x \mapsto (x,0)$, for all $x \in N$, and $q \mapsto (0,a)$.)
  \item \emph{No `redundant' states.} Assume there are distinct $q,r \in
      Q$ with $q \notin \ran a$ and $qa = ra$, for all $a \in \Sigma$.
      Then $\M$ generates the same quasi-variety as its subalgebra on $N :=
      M \comp \{q\}$. (Define $\phi \colon M \to N^2$ by $x \mapsto (x,0)$,
      for all $x \in N$, and $q \mapsto (r,r)$.)
\end{enumerate}
\end{remark}

Assume $\M$ is a finite automatic algebra with $M = Q \cup \Sigma \cup \{0\}$.
We say that a subset $C$ of $Q$ is a \emph{component} of $\M$ if it is a
connected component of the underlying graph of the partial automaton (that is, the
graph $\langle Q; \sim\rangle$ with $q \sim r$ if and only if $qa = r$ or $ra =
q$, for some $a \in \Sigma$). In this case, we call the subalgebra of $\M$ with
universe $C \cup \Sigma \cup \{0\}$ a \emph{component subalgebra} of $\M$. If
$\M$ has only one component, then we say that it is \emph{connected}.

The following easy fact will be useful in combination with
independence of the generator (Theorem~\ref{thm:indep}).

\begin{lemma} \label{lm:quasiv}
Let $\M$ and $\mathbf N$ be finite automatic algebras. Assume every
component subalgebra of $\M$ belongs to $\ISP{\mathbf N}$, and vice versa. Then
$\ISP\M = \ISP{\mathbf N}$.
\end{lemma}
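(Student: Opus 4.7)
The plan is to show $\M \in \ISP{\mathbf N}$; by symmetry one also gets $\mathbf N \in \ISP\M$, and the two inclusions yield $\ISP\M = \ISP{\mathbf N}$ by the usual monotonicity of $\ISP$. Since every component subalgebra of $\M$ already lies in $\ISP{\mathbf N}$ by hypothesis, and $\ISP{\mathbf N}$ is closed under products and subalgebras, it suffices to embed $\M$ into a direct product of its own component subalgebras.

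Let $C_1,\dotsc,C_k$ be the components of $\M$ and let $\M_1,\dotsc,\M_k$ be the corresponding component subalgebras, so that $M_i = C_i \cup \Sigma \cup \{0\}$. Define $\phi\colon M \to M_1 \times \dotsb \times M_k$ by setting $\phi(0)$ equal to the zero tuple, $\phi(a):=(a,\dotsc,a)$ for each $a \in \Sigma$, and, for each $q \in C_i$, taking $\phi(q)$ to be the tuple with $q$ in coordinate $i$ and $0$ elsewhere. Injectivity is immediate from the three distinct shapes of image tuples: the zero tuple for $0$; one nonzero coordinate for a state; nonzero in every coordinate for a letter.

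The verification that $\phi$ is a homomorphism is a short case-check. The only pair whose product in $\M$ is not automatically $0$ is a state $q \in C_i$ followed by a letter $a \in \Sigma$: here $qa$ stays in $C_i \cup \{0\}$, so $\phi(q)\cdot \phi(a)$ and $\phi(qa)$ agree coordinate-wise. Every other pair (two letters, two states, a letter followed by a state, or anything involving $0$) gives $0$ on both sides, since each $\phi(q)$ is non-zero in at most one coordinate and the coordinate products collapse to $0$ in $\M_i$. There is no genuine obstacle here; the only point worth flagging is the degenerate case $Q = \emptyset$, where $k = 0$ and the construction above is vacuous. In that case $\M$ is simply a zero-semigroup on $\Sigma \cup \{0\}$, and this case can be handled separately or ruled out by convention (cf.\ Remark~\ref{rem:easy}).
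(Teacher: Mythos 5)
Your proof is correct and is essentially the paper's own argument: the same embedding $\phi$ of $\M$ into the product of its component subalgebras (states to the tuple supported in one coordinate, letters and $0$ to the constant tuple), with the same reduction via symmetry and closure of $\ISP$. The only addition is your flagging of the $Q=\emptyset$ case, which the paper treats implicitly (it already disposes of $Q=\emptyset$ separately in Remark~\ref{rem:easy}).
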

\begin{proof}
Let $\M_1,\dots,\M_n$ be the component subalgebras of~$\M$. Using symmetry, it suffices to show that
$\M \in \ISP{\{\M_1,\dots,\M_n\}}$. For each $i \in \{1,\dots,n\}$, let $C_i$ denote the component of $\M$
corresponding to~$\M_i$. So $Q = C_1 \cup \dots \cup C_n$. Now define the map $\phi \colon M \to M_1 \times \dots
\times M_n$ by
\[
\phi(v) =
 \begin{cases}
 (0,\dots,0,\overset iv,0,\dots,0) &\text{if $v \in C_i$, for some $i \in \{1,\dots,n\}$,}\\
 (v,v,\dots,v) &\text{if $v \in \Sigma \cup\{0\}$.}
 \end{cases}
\]
Then $\phi$ is an embedding from $\M$ into $\M_1 \times \dots \times \M_n$.
\end{proof}

We now define some compatible operations and relations on automatic algebras
that will be used in the following two sections.

\begin{definition}\label{def:gst}
Let $\M$ be any automatic algebra. For all $u,v \in M$ such that $\{u,v\} \cap
\Sigma \ne \emptyset$, we can define the homomorphism $g_{u,v} \colon \M^2 \to
\M$ by
\[
g_{u,v}(x,y) :=
 \begin{cases}
 u &\text{if\/ $(x,y) = (u,v)$,}\\
 0 &\text{otherwise.}
 \end{cases}
\]
To check $g_{u,v}$ is a homomorphism, let $w,x,y,z \in M$. Then $g_{u,v}(w\cdot
x,y\cdot z) = 0$, as $w\cdot x,y\cdot z \in Q \cup \{0\}$, and $g_{u,v}(w,y)
\cdot g_{u,v}(x,z) = 0$, as $\{0,u\} \cdot \{0,u\} = \{0\}$.
\end{definition}

The following general lemma is an application of the `binary homomorphism' techniques
introduced in~\cite{binhoms}; see also~\cite[Section 2.2]{DUAaB}.

\begin{lemma}\label{lem:gst}
Let $\M$ be a finite algebra and let $f \colon \hom(\A,\M) \to M$,
for some finite $\A \in \ISP\M$.
Assume there exists\/ $u \in \ran f$ such that, for all\/ $v \in M$,
there is a homomorphism $g_{u,v} \colon \M^2 \to \M$ satisfying
\[
(\forall x,y \in M)\quad g_{u,v}(x,y) = u \iff (x,y) = (u,v).
\]
If\/ $f$ is $3$-locally an evaluation, then $f$ is an
evaluation.
\end{lemma}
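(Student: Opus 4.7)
The plan is to apply Lemma~\ref{lem:pres} to convert the $3$-local evaluation hypothesis into a preservation statement for each $g_{u,v}$, and then iterate that preservation to produce a single $a \in A$ that evaluates $f$ at every homomorphism $y \in \hom(\A,\M)$ at once.

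First I would fix $x_0 \in \hom(\A,\M)$ with $f(x_0) = u$, which exists by $u \in \ran f$. For each $v \in M$, the graph of $g_{u,v}$ is a subuniverse of $\M^3$ (since $g_{u,v}$ is given as a homomorphism $\M^2 \to \M$), and therefore a ternary compatible relation on $\M$; so by Lemma~\ref{lem:pres}, $f$ preserves it. Unwinding preservation, this is the identity
\[
f\bigl(g_{u,v} \circ \langle \alpha, \beta\rangle\bigr) = g_{u,v}\bigl(f(\alpha), f(\beta)\bigr)
\]
for all $\alpha, \beta \in \hom(\A,\M)$. Because $\A$ is finite, so is $\hom(\A,\M)$; I would enumerate it as $\{y_1, \ldots, y_n\}$ and inductively define $H_0 := x_0$ and, for $1 \le i \le n$,
\[
H_i := g_{u, f(y_i)} \circ \langle H_{i-1}, y_i\rangle,
\]
each of which is a homomorphism $\A \to \M$.

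Two short inductions on $i$ then finish the argument. The displayed preservation identity (with $v = f(y_i)$) together with $g_{u,v}(u,v) = u$ gives $f(H_i) = u$ for every $i$, while the defining property $g_{u,v}(x,y) = u \iff (x,y) = (u,v)$ gives, for every $b \in A$,
\[
H_i(b) = u \iff x_0(b) = u \text{ and } y_j(b) = f(y_j) \text{ for all } j \le i.
\]
Taking $i = n$: since $f(H_n) = u$ and $f$ is in particular $1$-locally an evaluation, some $a \in A$ satisfies $H_n(a) = u$; the characterization above then forces $y_j(a) = f(y_j)$ for every $j$, so $f$ is the evaluation at~$a$. The only step that calls for any care is recognizing the graph of each $g_{u,v}$ as a ternary compatible relation so that Lemma~\ref{lem:pres} applies; once that observation is in hand, the rest is a mechanical iteration and I foresee no serious obstacle.
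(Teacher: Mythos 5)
Your proof is correct, and it takes a genuinely different route from the paper's. The paper's argument is a two-step appeal to external machinery from Clark--Davey--Pitkethly~\cite{binhoms}: it first preserves $g_{u,u}$ to invoke the Strong Idempotents Lemma (giving an $a \in A$ that evaluates $f$ correctly on $f^{-1}(u)$), and then uses each $g_{u,v}$ with the First GST Lemma to propagate that evaluation to every other fibre $f^{-1}(v)$. You instead give a fully self-contained argument: after noting (exactly as the paper does) that $3$-locality forces $f$ to preserve the graph of each $g_{u,v}$, you enumerate the finite set $\hom(\A,\M)$, build the chain $H_0 = x_0$, $H_i = g_{u,f(y_i)} \circ \langle H_{i-1}, y_i \rangle$, and check by two easy inductions that $f(H_n) = u$ while $H_n^{-1}(u) = \{\, b \in A \mid x_0(b)=u \text{ and } y_j(b)=f(y_j) \text{ for all } j\,\}$; a single $1$-local evaluation of $f$ at $H_n$ then produces a global evaluating index $a$. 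Both proofs use finiteness of $\A$ essentially and both use exactly the $g_{u,v}$ hypotheses, so the two routes have the same footprint; what your version buys is transparency and independence from the ``binary homomorphism'' lemmas, at the cost of being a bit longer on the page. Everything checks out: the iterates $H_i$ are legitimate homomorphisms since each $g_{u,v}$ is one, the preservation identity is precisely what Lemma~\ref{lem:pres} supplies, and the iff characterization of $H_i^{-1}(u)$ follows cleanly from the defining equivalence for $g_{u,v}$.
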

\begin{proof}
Assume $f$ is $3$-locally an evaluation.
By Lemma~\ref{lem:pres}, the map $f$ preserves all ternary compatible relations
on $\M$ and therefore preserves~$g_{u,u}$. We have $g_{u,u}^{-1}(u) =
\{(u,u)\}$ and so, by the Strong Idempotents Lemma~\cite[Lemma 12]{binhoms},
the map $f$ agrees with evaluation at some $a \in A$ on $f^{-1}(u)$. Now let $v
\in M$. Using $g_{u,v}$ and the First GST Lemma~\cite[Lemma 17]{binhoms}, it
follows that $f$ also agrees with evaluation at $a$ on $f^{-1}(v)$. Thus $f$ is
evaluation at~$a$.
\end{proof}

The previous lemma and Definition~\ref{def:gst} yield the following corollary,
which will be used to cover one case in both of our main dualizability proofs.

\begin{corollary}\label{cor:gst}
Let $\M$ be a finite automatic algebra and let $f \colon \hom(\A,\M) \to M$,
for some finite $\A \in \ISP\M$, with $\ran f \cap \Sigma \ne \emptyset$.
If\/ $f$ is $3$-locally an evaluation, then $f$ is an
evaluation.
\end{corollary}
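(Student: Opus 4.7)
The plan is simply to apply Lemma~\ref{lem:gst} with a carefully chosen $u$, using the family of homomorphisms $g_{u,v}$ already constructed in Definition~\ref{def:gst}.

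First I would use the hypothesis $\ran f \cap \Sigma \ne \emptyset$ to pick some $u \in \ran f \cap \Sigma$. The key point is that with this choice of $u$, for \emph{every} $v \in M$ the pair $\{u,v\}$ automatically meets $\Sigma$ (because $u$ itself is a letter), so Definition~\ref{def:gst} supplies a homomorphism $g_{u,v} \colon \M^2 \to \M$ sending $(u,v)$ to $u$ and everything else to~$0$. By construction,
\[
g_{u,v}(x,y) = u \iff (x,y) = (u,v),
\]
which is precisely the hypothesis Lemma~\ref{lem:gst} requires.

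Then I would invoke Lemma~\ref{lem:gst}: since $f$ is $3$-locally an evaluation and the family $\{g_{u,v} : v \in M\}$ exists for our chosen $u \in \ran f$, the lemma immediately concludes that $f$ is an evaluation.

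There is essentially no obstacle here; the whole content of the corollary is in matching up the definition of $g_{u,v}$ (which demands $\{u,v\} \cap \Sigma \ne \emptyset$) with the availability of a value $u \in \ran f$ that lies in $\Sigma$, and the hypothesis $\ran f \cap \Sigma \ne \emptyset$ delivers exactly this. So the proof is a one-line reduction to Lemma~\ref{lem:gst}, with the only real work being to note the automatic verification of the $g_{u,v}$ hypothesis.
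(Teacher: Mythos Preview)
Your proposal is correct and is exactly the argument the paper has in mind: the paper simply states that the corollary follows from Lemma~\ref{lem:gst} together with Definition~\ref{def:gst}, and your write-up spells out precisely that reduction. The only point worth making explicit (which you do implicitly) is that $u \in \Sigma$ guarantees $u \ne 0$, so the ``iff'' condition on $g_{u,v}$ in Lemma~\ref{lem:gst} genuinely holds.
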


\begin{definition}\label{def:order}
Again, let $\M$ be any automatic algebra. We define an order on
$M$ by ${\sqsubseteq} := \Delta_M \cup (\{0\} \times Q)$; see
the diagram below.

\begin{center}
\begin{tikzpicture}
 \node at (0,0.5) {$\sqsubseteq$};
 \begin{scope}[xshift=2.25cm]
   \node[element] (0) at (0,0) {};
      \node at (0.3,0) {$0$};
   \node[element] (q) at (-1,0.9) {};
   \node[element] (r) at (-0.25,0.9) {};
   \node at (0.4,1) {$\dots$};
   \node[element] (s) at (1,0.9) {};
      \node at (0,1.2) {$\overbrace{\qquad\qquad\qquad}$};
      \node at (0,1.5) {$Q$};
   \path (q) edge (0);
   \path (r) edge (0);
   \path (s) edge (0);
 \end{scope}
 \begin{scope}[xshift=5cm]
   \node[element] (a) at (-1,0) {};
   \node[element] (b) at (-0.25,0) {};
   \node[element] (c) at (1,0) {};
   \node at (0.4,0) {$\dots$};
   \node at (0,0.3) {$\overbrace{\qquad\qquad\qquad}$};
   \node at (0,0.6) {$\Sigma$};
 \end{scope}
\end{tikzpicture}
\end{center}

\noindent The induced partial join operation is a homomorphism $\sqcup \colon
\mathbf D \to \M$, where the domain $\mathbf D$ is the subalgebra of $\M^2$
with universe $D := {\sqsubseteq} \cup {\sqsupseteq}$. To check this claim, it
suffices to show that $r:= \mathrm{graph}(\sqcup)$ is a subuniverse of $\M^3$.
Let $\vec x, \vec y \in r$. Since $\hat 0 \in r$, we can assume that $\vec x
\cdot \vec y \ne \hat 0$. So there must be $q \in Q$ and $a \in \Sigma$ such
that $\vec x \in \{ (0,q,q), (q,0,q), (q,q,q) \}$ and $\vec y = (a,a,a)$.
Therefore $\vec x \cdot \vec y \in r$, as required.
\end{definition}

\begin{definition}\label{def:quasi}
Now let $\M$ be a \emph{total automatic algebra} (that is, an automatic algebra
such that $\dom a = Q$, for every $a \in \Sigma$). Define a quasi-order on $M$
by ${\preccurlyeq} := Q^2 \cup \Sigma^2 \cup (\{0\} \times M)$; see the diagram
below.

\smallskip
\begin{center}
\begin{tikzpicture}
 \node at (0,0.5) {$\preccurlyeq$};
 \begin{scope}[xshift=2.5cm]
   \node[element] (0) at (0,0) {};
      \node at (0.3,0) {$0$};
   \node[ellipse,draw,densely dotted,minimum size=25pt] (Q) at (-0.8,1) {\phantom{ii}$Q$\phantom{ii}};
   \node[ellipse,draw,densely dotted,minimum size=25pt] (S) at (0.8,1) {\phantom{ii}$\Sigma$\phantom{ii}};
   \path (Q) edge (0);
   \path (S) edge (0);
 \end{scope}
\end{tikzpicture}
\end{center}

\noindent Then we can define an associated quasi-meet operation by
\[
u \curlywedge v :=
 \begin{cases}
 u &\text{if $(u, v) \in Q^2 \cup \Sigma^2$,}\\
 0 &\text{otherwise.}
 \end{cases}
\]
To see that $\curlywedge \colon \M^2 \to \M$ is a homomorphism, let $x,y,u,v
\in M$. We want to show that $(x\cdot u) \curlywedge (y \cdot v) =
(x\curlywedge y) \cdot (u \curlywedge v)$. We can assume that $x,y \in Q$ and
$u,v \in \Sigma$, since otherwise both sides evaluate to~$0$. As $\M$ is total,
we have $x\cdot u, y \cdot v \in Q$. So both sides evaluate to $x \cdot u$.
\end{definition}

\section{Letters acting as constants}\label{sec:easydual}

In this section, we show that a finite automatic algebra is dualizable if every
letter $a \in \Sigma$ acts as a constant unary operation on~$Q$. This result
will be used in Section~\ref{sec:class}, where we describe which $2$-state
automatic algebras are dualizable.

Note that, if every letter acts as a constant, then the automatic algebra
satisfies the equation $z  \cdot yx \approx z \cdot xyx$, and therefore has a
finitely based equational theory by Boozer~\cite[Theorem 1.16]{B:PhD}.

\begin{theorem}\label{lem:constants}
Let $\M$ be a finite total automatic algebra such that each letter is constant
on~$Q$. Then\/ $\M$ is dualizable.
\end{theorem}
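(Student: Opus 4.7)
The plan is to show that $\M$ is $3$-dualizable. Fix a finite $\A \in \ISP\M$ and a function $f \colon \hom(\A, \M) \to M$ that is $3$-locally an evaluation. By the Preservation Lemma, $f$ preserves every ternary compatible relation on $\M$. I argue $f$ is an evaluation by cases on $\ran f$.

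If $\ran f \cap \Sigma \neq \emptyset$, Corollary~\ref{cor:gst} concludes immediately. If $\ran f = \{0\}$, then $f = \mathrm{ev}_{\underline 0}$, noting that $\underline 0$ lies in $A$ because $\A$ is non-empty and $\alpha \cdot \alpha = \underline 0$ for every $\alpha \in A$.

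In the remaining case, $\ran f \subseteq Q \cup \{0\}$ with $\ran f \cap Q \neq \emptyset$, and I would subdivide according to whether some $q \in \ran f \cap Q$ lies outside $\bigcup_{a \in \Sigma} \ran a$. If such a $q$ exists, then for every $v \in M$ the formula $g_{q,v}(x,y) := q$ if $(x,y) = (q,v)$ and $0$ otherwise defines a homomorphism $\M^2 \to \M$: the hypothesis forces $wx \neq q$ for all $w, x \in M$, so both sides of the homomorphism identity for $g_{q,v}$ collapse to $0$. Applying Lemma~\ref{lem:gst} with $u = q$ then yields that $f$ is an evaluation.

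The hard subcase is when every $q \in \ran f \cap Q$ belongs to $\ran a$ for some $a \in \Sigma$; for each such $q$ pick $a_q \in \Sigma$ with $q \cdot a_q = q$. Motivated by this identity, I would construct $\alpha^* \in A$ as a product $\beta \cdot \gamma$, where $\gamma \in A$ realizes the letter $a_{f(x)}$ at each $x$ with $f(x) \in Q$ and takes a value outside $\Sigma$ at the remaining homomorphisms, and $\beta \in A$ satisfies $x(\beta) \in Q$ whenever $f(x) \in Q$. The existence of such $\beta, \gamma \in A$ would be extracted from the $3$-local hypothesis via preservation of the compatible quasi-meet $\curlywedge$ (available because $\M$ is total) together with the compatible binary homomorphisms $g_{a,q}$ of Definition~\ref{def:gst} for $a \in \Sigma$ and $q \in Q$. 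The principal obstacle is precisely this final subcase: assembling global witnesses $\beta, \gamma \in A$ from the $3$-local witnesses requires a delicate combinatorial argument exploiting the restricted form of non-trivial products in $\A \subseteq \M^I$, whose coordinates must all lie in $Q \cup \{0\}$.
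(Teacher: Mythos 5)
Your easy cases are correct: the $\ran f = \{0\}$ and $\ran f \cap \Sigma \neq \emptyset$ branches are handled exactly as in the paper, and your observation that $\underline{0} = \alpha\cdot\alpha \in A$ gives the evaluation point in the first branch is sound. Your additional subcase, where some $q \in \ran f \cap Q$ is not in $\bigcup_a \ran a$, is also handled correctly via Lemma~\ref{lem:gst} (though after the reductions of Remark~\ref{rem:easy}(4) such a $q$ would be a redundant state, so the paper simply eliminates this situation up front).

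The gap, however, is real and you identify it yourself: the entire content of the theorem sits in the remaining subcase, and your sketch of building a witness $\alpha^* = \beta\cdot\gamma$ is never assembled. There is no argument given that such $\beta,\gamma$ exist, and it is not at all clear that a direct-witness construction of this shape can be carried out; you would need to patch together coordinatewise data from $3$-local agreements into a single global element of $A$, and $A$ is only closed under the algebra operations and is otherwise unknown. The paper avoids this difficulty by going a different way: it first normalizes via Remark~\ref{rem:easy} so that $\Sigma = \{a_1,\dots,a_n\}$ with $a_i$ constant with value $q_i$, then introduces a linear order on $Q$ (and on $\Sigma$) whose induced meet $\wedge$ is a compatible \emph{total} binary operation. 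This makes $D(\A)$ a finite meet-semilattice on which $f$ is a semilattice homomorphism, hence $f^{-1}(q_1)$ has a least element $w$; automorphisms of $\M$ then pin down $f$ on $f^{-1}(Q)$ by evaluation at any $\sigma \in w^{-1}(q_1)$. Finally the paper argues by contradiction in two subcases (according to whether $w^{-1}(a_1)=\emptyset$), constructing an element $z$ with $f(z)=0$ but $z(w^{-1}(q_1)) = \{q_1\}$ using, in the first subcase, a compatible ternary \emph{partial} operation $h$, and in the second, the hat-map and the partial join $\sqcup$. You should also drop the claim of $3$-dualizability: the paper proves $4$-dualizability, and the $4$ is not cosmetic, since the ternary partial operation $h$ has a $4$-ary graph and preserving it requires $4$-local evaluation. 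There is no evidence $3$ would suffice.
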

\begin{proof}
We can assume that $Q = \{q_1,\dotsc,q_n\}$ and $\Sigma = \{a_1, \dotsc,
a_n\}$, for some $n \in \N$, where each letter $a_i$ is constant with
value~$q_i$. (Use (2) and (4) from Remark~\ref{rem:easy}.
In fact, we could restrict to the case $n \le 2$.)

Let $\A$ be a finite algebra in $\ISP\M$ and define
\[
D(\A) := \hom(\A,\M) \subseteq M^A.
\]
Assume that $f \colon D(\A) \to M$ is $4$-locally an evaluation. We aim to
prove that $f$ is an evaluation.

If $\ran f = \{0\}$, then $f$ is given by evaluation at~$0^\A$. If $\ran f \cap
\Sigma \ne \emptyset$, then $f$ is an evaluation, by Corollary~\ref{cor:gst}.
So we can assume that $\ran f \subseteq Q \cup \{0\}$
and, without loss of generality, that $q_1 \in \ran f$.

We claim that the meet operation shown below is a homomorphism $\wedge \colon
\M^2 \to \M$.

\begin{center}
\begin{tikzpicture}
 \node at (0,0.6) {$\wedge$};
 \begin{scope}[xshift=2.25cm]
   \node[element] (0) at (0,0) {};
      \node at (0.3,0) {$0$};
   \node[element] (1) at (-0.6,2.4) {};
      \node[standard label] at (-0.9,2.4) {$q_1$};
   \node[element] (2) at (-0.6,1.8) {};
      \node[standard label] at (-0.9,1.8) {$q_2$};
   \node[text height=2ex] (dL) at (-0.6,1.2) {$\vdots$};
   \node[element] (n) at (-0.6,0.6) {};
      \node[standard label] at (-0.9,0.6) {$q_n$};
   \node[element] (a1) at (0.6,2.4) {};
      \node[standard label] at (1,2.4) {$a_1$};
   \node[element] (a2) at (0.6,1.8) {};
      \node[standard label] at (1,1.8) {$a_2$};
   \node[text height=2ex] (dR) at (0.6,1.2) {$\vdots$};
   \node[element] (an) at (0.6,0.6) {};
      \node[standard label] at (1,0.6) {$a_n$};
   \path (1) edge (2);
   \path (2) edge (dL);
   \path (dL) edge (n);
   \path (n) edge (0);
   \path (a1) edge (a2);
   \path (a2) edge (dR);
   \path (dR) edge (an);
   \path (an) edge (0);
 \end{scope}
\end{tikzpicture}
\end{center}

\noindent To check this claim, let $x,y,u,v \in M$. We want to show that $(x
\cdot u) \wedge (y \cdot v) = (x \wedge y) \cdot (u \wedge v)$. We can assume
$x,y \in Q$ and $u,v \in \Sigma$, since otherwise both sides evaluate to~$0$.
It is now easy to check that both sides evaluate to~$q_m$, where $m$ is the
largest index such that $a_m \in \{u,v\}$.

So $D(\A)$ is a semilattice under the pointwise operation~$\wedge$ and the map
$f$ is a semilattice homomorphism (as $f$ is $3$-locally an evaluation). Since
$D(\A)$ is finite and $q_1 \in \ran f$, the set $f^{-1}(q_1)$ is a principal filter of $D(\A)$. Let
$w \colon \A \to \M$ denote the least element of $f^{-1}(q_1)$ and define
\[
A_1 := w^{-1}(q_1) \subseteq A.
\]
Since $f$ is $1$-locally an evaluation, we know that $A_1 \ne \emptyset$. We
will be needing the following fact about~$A_1$.

\medskip

\noindent \emph{Claim. $f(x) = x(\sigma)$, for all\/ $x \in f^{-1}(Q)$ and
all\/ $\newi \in A_1$.}

\smallskip

\noindent Let $x \in f^{-1}(Q) \subseteq D(\A)$ and let $\newi \in A_1 =
w^{-1}(q_1)$. Say that $f(x) = q_i$. There is an automorphism $\varphi$ of $\M$
such that $\varphi(q_i) = q_1$. Since $f$ is $2$-locally an evaluation, it
preserves~$\phi$. So $\varphi \circ x \in D(\A)$ with $f(\varphi \circ x) =
\varphi(f(x)) = q_1$. Thus $\varphi \circ x \ge w$ in the semilattice $D(\A)$.
It follows that $\varphi \circ x(\newi) \ge w(\newi) = q_1$ and therefore
$\varphi \circ x(\newi) = q_1$. Hence $x(\newi) = q_i$, as required.

\medskip

Now suppose, by way of contradiction, that $f$ is not an evaluation. We
consider two cases.

\medskip

\noindent \emph{Case 1: $w^{-1}(a_1) = \emptyset$.} Let $\vee \colon
\{0,q_1\}^2 \to \{0,q_1\}$ denote the join operation coming from the order $0 <
q_1$. Define the ternary partial operation $h$ on $M$ with domain $D := (M\comp
\{a_1\}) \times M^2$ by
\[
h(x,y,z) :=
 \begin{cases}
 y \vee z &\text{if $x = q_1$ and $y,z \in \{0,q_1\}$,}\\
 0 &\text{otherwise.}
 \end{cases}
\]
Then it is easy to check that $\mathbf D \le \M^3$ and that $h \colon \mathbf D
\to \M$ is a homomorphism.

Let $\newi \in A_1$. We are supposing that $f$ is not given by evaluation
at~$\newi$. Since $\ran f \subseteq Q \cup \{0\}$, it follows from the claim
above that there is $x_\newi \in f^{-1}(0)$ with $0 = f(x_\newi) \ne
x_\newi(\newi)$. We can assume that $x_\newi(\newi) \in Q$. (If $x_\newi(\newi)
= b \in \Sigma$, then use Definition~\ref{def:gst} and replace $x_\newi$ by
$g_{q_1,b}(w,x_\newi)$.) Using Definition~\ref{def:quasi}, set $y_\newi := w
\curlywedge x_\newi \in D(\A)$. Then $f(y_\newi) = f(w) \curlywedge f(x_\newi)
= 0$, with $y_\newi(\newi) = q_1$ and $y_\newi(A_1) \subseteq \{0,q_1\}$.

Now enumerate $A_1$ as $\newi_1, \newi_2, \dotsc, \newi_k$, where $k \in \N$.
Since $w^{-1}(a_1) = \emptyset$ by assumption in this case, we can define $z \in D(\A)$ by
\[
z := h(w,y_{\newi_1},h(w,y_{\newi_2},h(w,y_{\newi_3}, \dotsc h(w,y_{\newi_k}, y_{\newi_k}) \dotsc ))).
\]
We get $f(z) = 0$ and $z(A_1) = \{q_1\}$. But $f$ agrees with an evaluation on
$\{w,z\}$. So this is a contradiction.

\medskip

\noindent \emph{Case 2: $w^{-1}(a_1) \ne \emptyset$.} We can enumerate
$A_\Sigma := w^{-1}(\Sigma) = \{\newj_1,\newj_2,\dotsc,\newj_\ell\}$, where
$w(\newj_\ell) = a_1$. Now define the map $\widehat{\phantom{a}}\, \colon A_1
\to A_1$ by
\[
\widehat \newi := \newi \cdot \newj_1\newj_2 \dotsm \newj_\ell.
\]
This map is well defined because $w(\newj_\ell) = a_1$ and so, for all $\newi
\in A_1 = w^{-1}(q_1)$, we have $w(\widehat \newi) = w(\newi) \cdot
w(\newj_1)w(\newj_2) \dotsm w(\newj_\ell) = q_1$.

Now let $\newi \in A_1$. We are supposing that $f$ is not given by evaluation
at~$\widehat \newi \in A_1$. Using the claim, there is $x_\newi \in f^{-1}(0)$
such that $0=f(x_\newi) \ne x_\newi(\widehat
\newi)$. Since
\[
0 \ne x_\newi(\widehat \newi)
= x_\newi(\newi) \cdot x_\newi(\newj_1)x_\newi(\newj_2) \dotsm x_\newi(\newj_\ell),
\]
we have $x_\newi(\newi) \in Q$ and $x_\newi(A_\Sigma) \subseteq \Sigma$. Using
Definition~\ref{def:quasi}, set $y_\newi := w \curlywedge x_\newi$. Then
$f(y_\newi) = 0$ and $y_\newi(\newi) = q_1$. We will use the order
$\sqsubseteq$ and partial join $\sqcup$ from Definition~\ref{def:order}. Since
$x_\newi(A_\Sigma) \subseteq \Sigma$ and $y_\newi = w \curlywedge x_\newi$, it
follows that $y_\newi \sqsubseteq w$.

Again enumerate $A_1$ as $\newi_1, \newi_2, \dotsc, \newi_k$. Note that the
quasi-equation
\[
u_1 \sqsubseteq v \ \& \ u_2 \sqsubseteq v \implies u_1 \sqcup u_2 \sqsubseteq v
\]
holds on $M$ and therefore on $D(\A)$.
Since we have shown that $w$ is an upper bound for $y_{\newi_1}, y_{\newi_2}, \dotsc, y_{\newi_k}$ with
respect to~$\sqsubseteq$, it follows that we can define
$z := (\dotsb(( y_{\newi_1}\sqcup y_{\newi_2}) \sqcup
y_{\newi_3}) \dotsb ) \sqcup y_{\newi_k}$ in $D(\A)$.
We have $f(z) = 0$ and $z(A_1) = \{q_1\}$. But
$f$ agrees with an evaluation on $\{w,z\}$. So this is a contradiction.
\end{proof}

\begin{corollary}\label{cor:loops}
Let\/ $\M$ be a finite automatic algebra such that every edge is a loop
\textup(that is, such that\/ $qa \in \{q,0\}$, for all $q \in Q$ and\/ $a \in
\Sigma$\textup). Then\/ $\M$ is dualizable.
\end{corollary}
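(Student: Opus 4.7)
I plan to reduce to Theorem~\ref{lem:constants} via independence of the generator. Let $\mathbf L$ denote the three-element automatic algebra with one state $q$, one letter $a$, and $qa = q$; then $\mathbf L$ is total with its single letter acting constantly on its singleton state set, so Theorem~\ref{lem:constants} gives that $\mathbf L$ is dualizable. If $\M$ has no non-zero products at all, then $\M$ is a zero semigroup and hence dualizable (Remark~\ref{rem:easy} and \cite[Exercise 3.7]{NDftWA}); so from now on I fix a pair $q \in Q$ and $a \in \Sigma$ with $qa = q$ in $\M$, and aim to show $\ISP{\M} = \ISP{\mathbf L}$. Theorem~\ref{thm:indep} will then complete the proof.

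The crucial observation is that the hypothesis $qa \in \{q, 0\}$ forces any two distinct states to be non-adjacent in the underlying graph $\langle Q; \sim\rangle$, so each connected component of $\M$ is a singleton. I will apply Lemma~\ref{lm:quasiv} between $\M$ and $\mathbf L$. The fixed loop $qa = q$ already witnesses that $\{0, q, a\}$ is a subalgebra of $\M$ isomorphic to $\mathbf L$, so the sole component subalgebra of $\mathbf L$ lies in $\ISP{\M}$. Conversely, any component subalgebra $\mathbf C$ of $\M$ has the form $\langle \{q'\} \cup \Sigma \cup \{0\}; \cdot\rangle$ in which each letter either loops on $q'$ or kills it. I will iterate Remark~\ref{rem:easy}(1) to strip off the killing letters (each of which is totally undefined in $\mathbf C$), then iterate Remark~\ref{rem:easy}(2) to collapse the loop letters into a single representative (they all act identically on the unique state $q'$). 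The resulting reduced algebra is either $\mathbf L$ itself or, if no loop was present, the two-element zero semigroup on $\{0, q'\}$, which in turn embeds into $\mathbf L$ as the subalgebra $\{0, q\}$. Either way, $\mathbf C \in \ISP{\mathbf L}$.

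Assembling these pieces, Lemma~\ref{lm:quasiv} gives $\ISP{\M} = \ISP{\mathbf L}$, whence $\M$ is dualizable by Theorem~\ref{thm:indep}. The only mildly delicate point I anticipate is the bookkeeping for the successive reductions via Remark~\ref{rem:easy}: each step produces a new subalgebra generating the same quasi-variety as the previous one, so I need to chain these equalities together before concluding $\mathbf C \in \ISP{\mathbf L}$.
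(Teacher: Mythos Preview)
Your proposal is correct and takes essentially the same approach as the paper: use Lemma~\ref{lm:quasiv} and Remark~\ref{rem:easy} to reduce to a one-state loop algebra covered by Theorem~\ref{lem:constants}, then invoke independence of the generator. The only cosmetic difference is that the paper reduces $\M$ in place (citing Remark~\ref{rem:easy}(3) to discard isolated states, and then Remark~\ref{rem:easy}(2)) rather than fixing the concrete target $\mathbf L$ up front and citing Remark~\ref{rem:easy}(1),(2) on each component subalgebra as you do.
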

\begin{proof}
We use independence of the generator (Theorem~\ref{thm:indep}). Let $q \in Q$.
Then $\{q\}$ is a component of~$\M$. By Remark~\ref{rem:easy}(3), we can assume
$q$ is not isolated. So there is at least one $a \in \Sigma$ with $qa = q$. By
Lemma~\ref{lm:quasiv} and Remark~\ref{rem:easy}(2), we can assume every letter
in $\Sigma$ fixes~$q$. So now we can assume that every letter in $\Sigma$ acts
as the identity on~$Q$. By Lemma~\ref{lm:quasiv}, we can assume $\M$ has only
one state. Thus $\M$ is dualizable by Theorem~\ref{lem:constants}.
\end{proof}


\section{Letters acting as commuting permutations}\label{sec:trickydual}

The previous section gave a dualizability result for finite total automatic
algebras in which the range of each letter is as small as possible. In this
section we consider the opposite extreme, that is, where each letter acts as a
permutation. We are able to prove dualizability if we also assume that, on each
component, the set of permutations is a coset of a subgroup of an abelian
permutation group.

Note that, if the letters of an automatic algebra act as commuting permutations,
then the algebra satisfies the equations $z \cdot xy \approx z \cdot yx$ and
$z \cdot x^m \approx z\cdot x^n$, for some $m > n \ge 1$,
and so the algebra is finitely based by Boozer~\cite[Theorem 1.12]{B:PhD}.

\begin{definition}
Let $\M = \langle Q \cup \Sigma \cup \{0\}; \cdot \rangle$ be a finite
connected automatic algebra. We say that $\M$ is \emph{letter-affine} if
\begin{enumerate}
 \item each $a \in \Sigma$ acts as a permutation $\rho_a$ of~$Q$,
 \item the permutations in $\{\, \rho_a \mid a \in \Sigma \,\}$ commute,
     and
 \item for all $a,b,c \in \Sigma$ there exists $d \in \Sigma$ such that
     $\rho_a \circ \rho_b^{-1} \circ \rho_c = \rho_d$.
\end{enumerate}
A finite automatic algebra is \emph{letter-affine} if each of its component
subalgebras is letter-affine.
\end{definition}

The aim of this section is to prove the following.

\begin{theorem} \label{thm:affine}
Every letter-affine automatic algebra is dualizable.
\end{theorem}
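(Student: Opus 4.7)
The plan is to prove the theorem by showing $k$-dualizability for some small $k$. By Theorem~\ref{thm:indep} together with Lemma~\ref{lm:quasiv}, it suffices to handle the case when $\M$ is a connected letter-affine algebra. Applying Remark~\ref{rem:easy}(2),(4) we may further assume that $a \mapsto \rho_a$ is injective on $\Sigma$ (no repeated letters); the algebra is then total since each letter acts as a permutation. Write $G := \{\,\rho_a \mid a \in \Sigma\,\}$. Condition~(3) of the definition says $G$ is closed under the ternary operation $(x,y,z) \mapsto xy^{-1}z$; combined with~(2), this forces $G$ to be a coset of an abelian subgroup $H \le \operatorname{Sym}(Q)$ that commutes with $G$, so the group $K := \langle G\rangle$ is abelian. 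By connectedness $K$ acts regularly on $Q$, so fixing a basepoint $q_0 \in Q$ identifies $Q$ with $K$ in such a way that the action $q \cdot a = \rho_a(q)$ becomes an abelian group multiplication.

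I would then introduce the key compatible ternary partial Mal'cev operation $t \colon M^3 \to M$, defined by
\[
t(x_1,x_2,x_3) :=
 \begin{cases}
  x_1 x_2^{-1} x_3 & \text{if } x_1,x_2,x_3 \in Q,\\
  d & \text{if } x_1,x_2,x_3 \in \Sigma \text{ and } \rho_d = \rho_{x_1}\rho_{x_2}^{-1}\rho_{x_3},\\
  0 & \text{otherwise.}
 \end{cases}
\]
The middle case is well-defined as an element of $\Sigma$ by condition~(3) and the injectivity of $a \mapsto \rho_a$. A short case analysis using commutativity of $K$ throughout verifies the homomorphism identity $t\bigl((x_1,x_2,x_3)\cdot(y_1,y_2,y_3)\bigr) = t(x_1,x_2,x_3) \cdot t(y_1,y_2,y_3)$, so that $t \colon \M^3 \to \M$ is a homomorphism.

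Now let $\A \in \ISP\M$ be finite and let $f \colon \hom(\A,\M) \to M$ be $k$-locally an evaluation. I would split on $\ran f$: if $\ran f = \{0\}$ then $f$ evaluates at $0^{\A}$; if $\ran f \cap \Sigma \ne \emptyset$ then Corollary~\ref{cor:gst} applies directly. The main case is $\ran f \subseteq Q \cup \{0\}$ with some $q^\ast \in \ran f \cap Q$. Fixing $w \in f^{-1}(q^\ast)$, preservation of $t$ makes $f^{-1}(q^\ast)$ into a Mal'cev-structured set with $w$ playing the role of identity. The plan from here is to mimic the Case~1/Case~2 dichotomy used in the proof of Theorem~\ref{lem:constants}: use the quasi-meet $\curlywedge$ from Definition~\ref{def:quasi} to `reduce' any counterexample $x \in f^{-1}(0)$ so its nonzero coordinates land in $Q$; use $t$ with $w$ to transport such $x$ into $f^{-1}(q^\ast)$ at a chosen coordinate; then use the partial join $\sqcup$ from Definition~\ref{def:order} to glue the finitely many local witnesses supplied by the $k$-local evaluation hypothesis into a single element of $\A$ that witnesses $f$ globally.

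The main obstacle will be controlling how $0$ interacts with $t$. Unlike Theorem~\ref{lem:constants}, there is no canonical `smallest' state to retreat to, since every state of $Q$ is in the range of every letter, and $t$ vanishes whenever its arguments straddle $Q$, $\Sigma$, and~$0$. A typical homomorphism $x \colon \A \to \M$ mixes values from all three parts of $M$, so the transport step above has to be performed coordinate-block by coordinate-block, threading $t$, $g_{u,v}$ (from Definition~\ref{def:gst}), $\curlywedge$ and $\sqcup$ together carefully. Establishing the correct finite $k$ (plausibly $k = 3$, so that all the relevant ternary compatible operations are automatically preserved by Lemma~\ref{lem:pres}) and carrying out the resulting gluing argument is where I expect the real work to lie.
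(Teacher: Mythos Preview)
Your reduction to the connected case is not justified by the results you cite. Lemma~\ref{lm:quasiv} only tells you that two automatic algebras with mutually embedding component subalgebras generate the same quasi-variety; it does not say that dualizability of each component subalgebra implies dualizability of the whole algebra. There is no single connected letter-affine $\mathbf N$ that will generate $\ISP{\M}$ when $\M$ has components of genuinely different types, and the general fact ``a finite product of dualizable algebras is dualizable'' is not available here. The paper does not attempt this reduction: it works with all $n$ components at once and takes $k = \max(4,2n+1)$, so the bound itself depends on the number of components. The homomorphisms $w_1,\dots,w_\ell$ into the different components, the sets $Y_1,\dots,Y_\ell$, and the way they are woven together in the matrix $\scrbarM$ are not artifacts of presentation but are where the cross-component interaction is controlled.

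Even restricting to the connected case, your outline stops exactly where the difficulty begins. You correctly identify the Mal'cev partial operation $t$ (the paper's~$\overline p$), the quasi-meet~$\curlywedge$, and the partial join~$\sqcup$, and the case split on $\ran f$ is right. But ``glue the finitely many local witnesses via $\sqcup$'' does not work: different elements $y \in f^{-1}(e)$ can send a candidate index $\sigma$ to different elements of the subgroup $H$, and nothing in your toolkit forces these to be simultaneously $e$ at a single~$\sigma$. The paper resolves this by encoding the restrictions $y\rest C$ as rows of a matrix over $H$ (and then over $\Z_m$ via a carefully chosen character $\chi$ supplied by Proposition~\ref{HuC}), showing that the rows form a subgroup, the columns form a coset, and every row contains a~$0$; the punchline is the purely combinatorial Proposition~\ref{no-matrix}, which then forces some column to vanish identically, yielding the global evaluation point. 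That matrix argument, together with the compatible partial operation $\K$ and the extension lemmas for group endomorphisms, is the missing idea your plan does not anticipate.
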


As special cases, we will get the following two results.

\begin{corollary}  \label{thm:perm}
Let\/ $\M$ be a finite automatic algebra with $\Sigma=\{a\}$. If\/ $a$ acts as
a permutation of\/~$Q$, then $\M$ is dualizable.
\end{corollary}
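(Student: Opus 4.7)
The plan is to derive this corollary immediately from Theorem~\ref{thm:affine} by verifying that the hypotheses make $\M$ letter-affine in the sense of the definition just given. Since $\Sigma = \{a\}$ is a singleton, commutativity condition~(2) in the definition of letter-affine is vacuous (there is only one permutation, which commutes with itself), and condition~(3) reduces to the identity $\rho_a \circ \rho_a^{-1} \circ \rho_a = \rho_a$, which holds trivially with $d := a$. So the only nontrivial thing to check on each component subalgebra is condition~(1), namely that $a$ restricts to a permutation of the component's state set.

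To see this, recall that the components of $\M$ are the connected components of the graph $\langle Q; \sim\rangle$ with $q \sim r$ iff $qa = r$ or $ra = q$. Since by hypothesis $a$ acts as a permutation $\rho_a$ of $Q$, this graph is precisely the disjoint union of the orbits (cycles) of $\rho_a$. Each component is therefore $\rho_a$-invariant, and $\rho_a$ restricts to a permutation of it. Hence every component subalgebra satisfies conditions (1)--(3), so it is letter-affine; by definition $\M$ itself is letter-affine.

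Applying Theorem~\ref{thm:affine} then yields the dualizability of $\M$. There is essentially no obstacle here: the corollary is a direct specialization of the section's main theorem, and the argument amounts to unpacking the definition of letter-affine in the singleton-alphabet case. The real content, of course, lies in the proof of Theorem~\ref{thm:affine} itself, which this plan does not attempt to re-derive.
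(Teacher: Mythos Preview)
Your proposal is correct and matches the paper's approach: the corollary is stated immediately after Theorem~\ref{thm:affine} as a direct specialization, and your verification that a single-letter permutational automatic algebra is trivially letter-affine is exactly what is needed. The paper also remarks in passing (after Claim~\ref{cl:inHi}) that this particular corollary can be extracted earlier in the proof of Theorem~\ref{thm:affine}, since $|\Sigma|=1$ forces each $H_i$ to be trivial; but this is an aside, not the intended route.
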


\begin{corollary} \label{thm:dualabel}
Let\/ $\M$ be a finite automatic algebra.  If\/ $\Sigma$ acts as an abelian group of
permutations of\/~$Q$, then $\M$ is dualizable.
\end{corollary}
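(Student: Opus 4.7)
The plan is to show that every letter-affine automatic algebra $\M$ is $k$-dualizable for some $k$, by extending the strategy of the proof of Theorem~\ref{lem:constants}. First, Theorem~\ref{thm:indep} combined with Lemma~\ref{lm:quasiv} reduces the problem to the connected case. In that case, the commuting permutations $\{\rho_a : a \in \Sigma\}$ generate an abelian group $G$ that acts transitively on $Q$; since $G$ is abelian every point of $Q$ has the same stabiliser $N$, so $Q$ becomes a torsor for the finite abelian group $\bar G := G/N$. Condition~(3) in the definition then says that the image $\bar \Sigma$ of $\Sigma$ in $\bar G$ is a coset of some subgroup $\bar H \le \bar G$. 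Each element of $\bar G$ extends to an automorphism of $\M$ (fixing $\Sigma \cup \{0\}$ pointwise and acting by translation on $Q$), so these automorphisms are preserved by $f$ whenever $f$ is $2$-locally an evaluation.

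The compatible relations and operations I would use are those introduced in Section~\ref{sec:toolkit} --- the partial join $\sqcup$, the quasi-meet $\curlywedge$, and the binary homomorphisms $g_{u,v}$ --- augmented by the structure afforded by the torsor and the coset: a compatible partial Mal'cev operation $p$ on $Q$ coming from the affine structure $p(x,y,z) = x - y + z$, and a compatible partial ternary operation $d$ on $\Sigma$ given by $\bar \rho_{d(a,b,c)} = \bar\rho_a \bar\rho_b^{-1} \bar\rho_c$ (well-defined by condition~(3)). Both operations are compatible with $\M$ because $\bar G$ commutes with itself.

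Given a finite $\A \in \ISP\M$ and $f \colon \hom(\A,\M) \to M$ that is $k$-locally an evaluation for a suitable $k$, I would split on $\ran f$. The case $\ran f = \{0\}$ is immediate and $\ran f \cap \Sigma \ne \emptyset$ is closed by Corollary~\ref{cor:gst}. In the remaining main case, $\ran f \subseteq Q \cup \{0\}$ and $\ran f \cap Q \ne \emptyset$. Fix $q_0 \in Q \cap \ran f$, use compatibility with $\sqcup$ to locate the $\sqsubseteq$-minimum element $w$ of $f^{-1}(q_0)$, and aim to show that $f$ agrees with evaluation at some $\sigma \in A_1 := w^{-1}(q_0)$. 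Using the automorphism action of $\bar G$ (as in the claim inside the proof of Theorem~\ref{lem:constants}), the problem reduces to checking $f(x) = x(\sigma)$ on $f^{-1}(q_0)$ (immediate once $x \sqsupseteq w$) and on $f^{-1}(0)$, which I would handle by splitting into subcases depending on whether $w^{-1}(\Sigma)$ is empty.

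I expect the main obstacle to be the analogue of Case~2 of that proof, when $w^{-1}(\Sigma) \ne \emptyset$. In the constant-letter setting one simply concatenated the $\Sigma$-preimages of $w$ to return to $A_1$; here, since the letters act as non-trivial permutations of $Q$, I need to use the coset operation $d$ on $\Sigma$ to manufacture a compound element $\widehat\sigma$ whose total letter-action on $Q$ is trivial, while $w(\widehat\sigma) = q_0$. Controlling hypothetical $f^{-1}(0)$-counterexamples $x_\sigma$ then requires combining the Mal'cev operation $p$ on the $Q$-parts, the coset operation $d$ on the $\Sigma$-parts, the quasi-meet $\curlywedge$, and the order $\sqsubseteq$ with its partial join $\sqcup$ into a single argument. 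Packaging all of this together --- and in particular tracking displacements in the torsor rather than absolute values as in a group --- is where I anticipate the proof will become technically involved.
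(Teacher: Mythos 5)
Your proposal attempts to reprove the entire Theorem~\ref{thm:affine} from scratch, whereas the paper's own proof of Corollary~\ref{thm:dualabel} is a one-liner: when $\Sigma$ acts as an abelian group of permutations, conditions (1)--(3) of the letter-affine definition hold immediately on each component (in particular $\rho_a\rho_b^{-1}\rho_c$ lies in the group $\{\rho_a : a\in\Sigma\}$, so condition (3) is automatic), and Theorem~\ref{thm:affine} applies. Since you elect to reprove the theorem, the comparison must be against that proof, and there the proposal has two substantive problems.

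First, and most seriously, the proposal stops precisely where the paper's argument starts to do real work. You correctly identify the compatible Mal'cev operation on $Q$, the coset operation $d$ on $\Sigma$, the quasi-meet $\curlywedge$, the order $\sqsubseteq$ with partial join $\sqcup$, the filter point $w$, and the `hat' trick to neutralise the $\Sigma$-action, and you correctly route the easy cases through Corollary~\ref{cor:gst}. But you then say only that you ``anticipate the proof will become technically involved'' and never produce the argument that forces agreement with a single evaluation point. The paper's engine at this stage is genuinely combinatorial: it assembles the matrices $\scrM$ and $\scrbarM$ (rows indexed by the near-minimal elements $y\in Y_i$, columns by the filtered index set $C$), proves the rows form a subgroup of $H_i^C$ (Claim~\ref{cor:groupstuff}, via compatible extensions of group endomorphisms from Claim~\ref{cl:endextends}), proves the columns form a coset (Claim~\ref{cl:cols-coset}), projects to $\Z_m$ via characters chosen by Proposition~\ref{HuC}, and then closes with the purely combinatorial Proposition~\ref{no-matrix}. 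None of this machinery appears, even in outline, in your sketch; the operations you list do not by themselves yield the conclusion.

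Second, your opening reduction to the connected case via Theorem~\ref{thm:indep} and Lemma~\ref{lm:quasiv} is not sound as stated. Lemma~\ref{lm:quasiv} shows $\ISP\M=\ISP{\{\M_1,\dots,\M_n\}}$ for the component subalgebras $\M_i$, but Theorem~\ref{thm:indep} requires a single generating algebra. The natural candidate $\M_1\times\dots\times\M_n$ is not an automatic algebra, and dualizability of a product is not automatic from dualizability of the factors, so the reduction leaves a gap. The paper sidesteps this by working with all components $G_1,\dots,G_n$ simultaneously throughout the proof (this is exactly why the matrix is partitioned via $Y=Y_1\cup\dots\cup Y_\ell$); Remark~\ref{rem:general} uses Lemma~\ref{lm:quasiv} only as an afterthought to enlarge the scope of the already-proved theorem, not to shrink the proof to one component.
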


\begin{remark}\label{rem:general}
We can use independence of the generator to broaden the scope of
Theorem~\ref{thm:affine}. The letters in $\Sigma$ can act as partial
permutations of~$Q$ provided that, on each component of $\M$, each such partial
permutation is either totally defined or totally undefined. More precisely: a
finite automatic algebra $\M$ is dualizable if, for each component $C$ of~$\M$,
the subalgebra of $\M$ with universe $C \cup \Sigma_C \cup \{0\}$ is
letter-affine, where $\Sigma_C := \{\, a \in \Sigma \mid \dom a \cap C \ne
\emptyset\,\}$. (Use Theorem~\ref{thm:indep}, Lemma~\ref{lm:quasiv} and
Remark~\ref{rem:easy} (2), (3).)
\end{remark}

We shall say that an automatic algebra $\M$ is \emph{permutational} if every $a
\in \Sigma$ acts as a permutation of~$Q$, and that $\M$ has \emph{commuting
letters} if it satisfies the equation $x\cdot yz \approx x\cdot zy$. In
particular, every letter-affine automatic algebra is permutational and has
commuting letters.

For the remainder of this section, we consider a fixed finite automatic
algebra $\M = \langle Q \cup \Sigma \cup \{0\}; \cdot \rangle$ that is
permutational and has commuting letters. Our aim is to prove that, if $\M$ is
letter-affine, then it is dualizable. Because some parts of our argument may
have future use, we will not assume that $\M$ is letter-affine until that
assumption is needed.

Let $G_1,\dotsc,G_n$ be the components of $\M$, so that $Q = G_1 \cup \dots
\cup G_n$. We start by showing that each $G_i$ can be viewed as a finite abelian group.

\begin{claim}\label{cl:new}
For each $i \in \{1,\dotsc,n\}$, there is a binary operation $*$ on $G_i$ and
a map $-_{(i)} \colon \Sigma \to G_i$
such that
\begin{enumerate}
 \item $(G_i; *)$ is an abelian group with generating set\/~$\Sigma_{(i)}$, and
 \item for all\/ $q \in G_i$ and\/ $a \in \Sigma$, we have $q\cdot a =
     q * a_{(i)}$.
\end{enumerate}
\end{claim}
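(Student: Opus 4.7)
The plan is to exploit the fact that, since $\M$ is permutational and the components $G_i$ are $\rho_a$-invariant, each letter $a \in \Sigma$ restricts to a permutation of $G_i$, and these restrictions commute as a consequence of the equation $x \cdot yz \approx x \cdot zy$. Let $H_i$ denote the subgroup of $\mathrm{Sym}(G_i)$ generated by $\{\,\rho_a\rest{G_i} \mid a \in \Sigma\,\}$; since the generators commute, $H_i$ is abelian. My strategy is to show that $H_i$ acts \emph{regularly} on $G_i$ (that is, transitively and freely), and then transfer the group structure of $H_i$ to $G_i$ via an orbit map through any chosen base point.

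Transitivity would follow from connectedness: an undirected path in the component graph from $q$ to $r$ decomposes into a sequence of applications of $\rho_a^{\pm 1}$ for various $a \in \Sigma$, and the composition lies in $H_i$. Freeness is the standard fact that a transitive abelian action is automatically free: if $\sigma \in H_i$ fixes some $q \in G_i$, then for any $r = \tau(q)$ with $\tau \in H_i$ we get $\sigma(r) = \sigma\tau(q) = \tau\sigma(q) = \tau(q) = r$, so $\sigma$ fixes all of $G_i$ by transitivity and is therefore the identity.

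Once $G_i$ is identified as an $H_i$-torsor, I fix a base point $e_i \in G_i$, pull the operation of $H_i$ back along $\psi\colon \sigma \mapsto \sigma(e_i)$ to obtain $*$ on $G_i$, and define $a_{(i)} := \rho_a(e_i) = e_i \cdot a$. Condition~(2) then reduces to the identity $q \cdot a = q * (e_i \cdot a)$, which is a routine unwinding of $\psi(\tau \cdot \rho_a) = (\tau \circ \rho_a)(e_i)$ using that $\tau$ and $\rho_a$ commute in $H_i$. Condition~(1) is immediate, since $\Sigma_{(i)} = \psi(\{\,\rho_a\rest{G_i} \mid a \in \Sigma\,\})$ corresponds under $\psi$ to the generating set of $H_i$.

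The argument contains no real obstacle: it is essentially the classical observation that a transitive action of a commutative group is regular, repackaged in the language of automatic algebras. The only points requiring a brief verification are that each $\rho_a$ genuinely preserves each component (immediate from the definition of $\sim$) and that each restriction $\rho_a\rest{G_i}$ is actually a permutation of the finite set $G_i$.
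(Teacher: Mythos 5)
Your proposal is correct and follows essentially the same route as the paper: form the abelian permutation group generated by the letter actions on a component (the paper calls it $\Pi_i$, not $H_i$, reserving $H_i$ for a different subgroup later), observe that its action is transitive by connectedness and hence regular since it is abelian, and transfer the group structure to $G_i$ via the orbit map at a chosen base point $e_i$. The only difference is cosmetic notation, so there is nothing to add.
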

\begin{proof}
Since $\M$ is permutational, each letter $a \in \Sigma$ acts as a permutation
$\rho_{a,i}$ of~$G_i$. Define the permutation group
\[
\Pi_i := \langle \{\, \rho_{a,i} \mid a \in \Sigma \,\}\rangle \le S_{G_i}.
\]
Then $\Pi_i$ is abelian, as $\M$ has commuting letters. Note that, since $G_i$
is a component of~$\M$, the group $\Pi_i$ induces a transitive abelian group
action on~$G_i$.

Choose a state $e_i \in G_i$ and define the map $f \colon \Pi_i \to G_i$ by
$f(\phi) = \phi(e_i)$. Then $f$ is surjective, as $\Pi_i$ acts transitively
on~$G_i$. To check that $f$ is one-to-one, let $\phi, \psi \in \Pi_i$ with
$\phi(e_i) = \psi(e_i)$. Then it follows easily that $\phi = \psi$, since
$\Pi_i$ induces a transitive abelian group action on~$G_i$.

Using the bijection $f \colon \Pi_i \to G_i$, the abelian group operation
$\circ$ on $\Pi_i$ transfers to an abelian group operation $*$ on~$G_i$. Now
define the map $-_{(i)} \colon \Sigma \to G_i$ by
\[
a_{(i)} := e_i \cdot a = \rho_{a,i}(e_i) = f(\rho_{a,i}).
\]
Since $f \colon \Pi_i \to G_i$ is a group isomorphism and $\Pi_i$ is generated
by $\{\, \rho_{a,i} \mid a \in \Sigma \,\}$, it follows that $G_i$ is generated
by~$\Sigma_{(i)}$. So (1) holds.

Let $q \in G_i$ and let $a \in \Sigma$. Then $q = f(\phi) = \phi(e_i)$, for
some $\phi \in \Pi_i$. Since the permutations in $\Pi_i$ commute, we get
\begin{align*}
q \cdot a
 &= \rho_{a,i}(q)
 = \rho_{a,i} \circ \phi(e_i)
 = \phi \circ \rho_{a,i}(e_i)\\
 &= f(\phi \circ \rho_{a,i})
 = f(\phi) * f(\rho_{a,i})
 = q * a_{(i)}.
\end{align*}
So (2) holds.
\end{proof}

From now on, we use multiplicative notation for the groups $G_1, \dots, G_n$.

\begin{definition}\label{df:eH}
For each $i \in \{1,\dots,n\}$, let $e_i$ denote the identity element of the
group~$G_i$. Define the subgroup $H_i$ of $G_i$ by
\[
H_i := \langle \{g^{-1}h \mid g,h \in \Sigma_{(i)}\} \rangle \le G_i.
\]
Then $\Sigma_{(i)}$ is contained in a coset of~$H_i$. So, as $\Sigma_{(i)}$ is
a generating set for $G_i$, the group $G_i/H_i$ is cyclic.
\end{definition}

\begin{claim}\label{cl:SigmaCoset}
The automatic algebra $\M$ is letter-affine if and only if\/ $\Sigma_{(i)}$ is
a coset of\/~$H_i$ in~$G_i$, for each $i \in \{1,\dots,n\}$.
\end{claim}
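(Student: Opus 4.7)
The plan is to unwind the definitions to reduce the claim to a purely group-theoretic statement about subsets of $G_i$. Since $\M$ is permutational and has commuting letters, conditions (1) and (2) in the definition of letter-affine are automatic on every component, so letter-affineness reduces to condition (3): for all $a,b,c \in \Sigma$ and all $i \in \{1,\dots,n\}$, there exists $d \in \Sigma$ with $\rho_{a,i} \circ \rho_{b,i}^{-1} \circ \rho_{c,i} = \rho_{d,i}$. Transporting along the group isomorphism $f \colon \Pi_i \to G_i$ from the proof of Claim~\ref{cl:new}, and using that $a_{(i)} = f(\rho_{a,i})$, this becomes the equivalent statement
\[
(\dagger)\qquad a_{(i)} \cdot b_{(i)}^{-1} \cdot c_{(i)} \in \Sigma_{(i)} \quad \text{for all } a,b,c \in \Sigma.
\]
So I just have to show that $(\dagger)$ holds for every $i$ if and only if each $\Sigma_{(i)}$ is a full coset of $H_i$ in $G_i$.

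For the easy direction, assume each $\Sigma_{(i)}$ is a coset of $H_i$. Given $a,b,c \in \Sigma$, the elements $a_{(i)}$ and $b_{(i)}$ lie in the same coset of $H_i$, so $a_{(i)} b_{(i)}^{-1} \in H_i$. Then $a_{(i)} b_{(i)}^{-1} c_{(i)} \in H_i \cdot c_{(i)} = \Sigma_{(i)}$, which gives $(\dagger)$.

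For the other direction, assume $(\dagger)$. As noted in Definition~\ref{df:eH}, $\Sigma_{(i)}$ is contained in a coset of $H_i$, and that coset is $H_i \cdot c_{(i)}$ for any fixed $c \in \Sigma$. It remains to show the reverse inclusion $H_i \cdot c_{(i)} \subseteq \Sigma_{(i)}$. Every $h \in H_i$ is a product $h = (g_1^{-1} h_1)(g_2^{-1} h_2) \dotsm (g_k^{-1} h_k)$ of generators with $g_j, h_j \in \Sigma_{(i)}$, so I will prove by induction on $k$ that $h \cdot c_{(i)} \in \Sigma_{(i)}$. The base case $k = 0$ is immediate. For the inductive step, write $h = h' \cdot (g^{-1}k)$ with $g, k \in \Sigma_{(i)}$ and $h'$ a product of $k-1$ generators. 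Since $G_i$ is abelian,
\[
h \cdot c_{(i)} = h' \cdot (k \cdot g^{-1} \cdot c_{(i)}),
\]
and $(\dagger)$ (applied with the roles $a = k$, $b = g$, $c = c$) produces some $d \in \Sigma$ such that $k \cdot g^{-1} \cdot c_{(i)} = d_{(i)}$. The inductive hypothesis applied to $h'$ and $d$ then yields $h' \cdot d_{(i)} \in \Sigma_{(i)}$, completing the step.

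The only delicate point is the inductive step above: one must replace the leading $g^{-1}k$ pair by a single generator $d_{(i)} \in \Sigma_{(i)}$ before invoking the inductive hypothesis, and this is exactly what $(\dagger)$ provides (thanks to commutativity of $G_i$, which lets us rearrange $g^{-1} k c_{(i)}$ into the shape $k g^{-1} c_{(i)}$ needed for $(\dagger)$). I expect this to be the main substantive step; everything else is routine translation between the permutation and coset descriptions.
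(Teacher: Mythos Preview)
Your proof is correct. The paper takes a shorter route: after translating condition~(3) of the definition of letter-affine to $G_i$ via Claim~\ref{cl:new} (exactly as you do, obtaining your $(\dagger)$), it simply invokes the standard group-theoretic fact that a nonempty subset $S$ of an abelian group is a coset of a subgroup if and only if $S$ is closed under the Mal'cev operation $p(x,y,z)=xy^{-1}z$; the subgroup in question is then $\{g^{-1}h \mid g,h\in S\}$, which for $S=\Sigma_{(i)}$ is precisely~$H_i$. Your argument is essentially a self-contained proof of this characterization in the case at hand: your forward direction is the easy half, and your induction on word length in the generators of $H_i$ is a direct proof of the harder half. The paper's version is more concise and conceptual; yours avoids quoting the external lemma. (One cosmetic point: you use $k$ both for the induction index and for an element of $\Sigma_{(i)}$, which is momentarily confusing.)
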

\begin{proof}
A subset $S$ of $G_i$ is a coset of a subgroup of $G_i$ if and only if $S$ is
closed under the Mal'cev operation $p(x,y,z) = xy^{-1}z$.
By Claim~\ref{cl:new}, each letter $a \in \Sigma$ acts on the group~$G_i$ as right
multiplication by $a_{(i)}$.
So the claim now follows easily.
\end{proof}

We next introduce some helpful compatible operations on~$\M$.

\begin{definition}\label{df:TK}\
\begin{enumerate}
\item For $i \in \{1,\dots,n\}$ and $g \in G_i$, the  compatible
    unary operation $\lambda_g$ on $\M$ is given by
\[
\lambda_g(v) =
\begin{cases}
gv & \mbox{if $v \in G_i$,}\\
v & \mbox{otherwise.}
\end{cases}
\]
\item The   compatible binary partial operation $\K$ on $\M$ with domain
    $\big(\bigcup_{i=1}^n G_i^2\big) \cup \Sigma^2 \cup \{(0,0)\}$ is given
    by
\[
u\K v =
\begin{cases}
0 & \mbox{if $u,v \in G_i$ with $u^{-1}v \notin H_i$, for some $i \in \{1,\dots,n\}$,}\\
u & \mbox{otherwise.}
\end{cases}
\]
\end{enumerate}
\end{definition}

We now begin an argument which will ultimately prove that, if $\M$ is
letter-affine, then it is dualizable. Consider a finite algebra $\A \in
\ISP\M$. Define $D(\A) := \hom(\A,\M)$ and assume that $f \colon D(\A) \to M$
is $\max(4,2n+1)$-locally an evaluation. We aim to prove that $f$ is an
evaluation.

If $\ran f = \{0\}$, then $f$ is given by evaluation at~$0^\A$.
Using Corollary~\ref{cor:gst}, we can now assume that
$\ran f \subseteq Q \cup \{0\}$, with $\ran f \cap Q \ne \emptyset$. By re-indexing the components, we
can assume that $\ran f \cap G_i \ne\varnothing$, for $i \in
\{1,\dotsc,\ell\}$, and $\ran f \cap G_i =\varnothing$, for $i \in \{\ell + 1,
\dotsc, n\}$.

We shall use the quasi-order $\preccurlyeq$ on $M$ given by
Definition~\ref{def:quasi}.

\begin{claim}\label{cl:lambda}
The set\/ $f^{-1}(Q)$ is a `principal filter' of\/ $D(\A)$ under the
quasi-order~$\preccurlyeq$. More precisely, there exists $w \in D(\A)$ such
that, for all $x \in D(\A)$, we have $f(x) \in Q$ if and only if\/ $w \preccurlyeq x$.
Furthermore, for each $i \in \{1,\dotsc,\ell\}$, there exists $w_i \in
f^{-1}(e_i)$ with\/ $w \preccurlyeq w_i \preccurlyeq w$.
\end{claim}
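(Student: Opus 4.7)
The plan is to take $w$ as the iterated quasi-meet (under the compatible operation $\curlywedge$ from Definition~\ref{def:quasi}) of all the elements of $f^{-1}(Q)$. Enumerate $f^{-1}(Q) = \{x_1, \dotsc, x_k\}$; this set is finite and nonempty because $D(\A)$ is finite and $\ran f \cap Q \ne \emptyset$. Set $w := (\dotsb(x_1 \curlywedge x_2) \curlywedge \dotsb) \curlywedge x_k$. Since $\M$ is permutational it is total, so $\curlywedge \colon \M^2 \to \M$ is a homomorphism; its graph is a ternary compatible relation, hence is preserved by $f$ (by Lemma~\ref{lem:pres}, since $f$ is $\max(4,2n+1)$-locally an evaluation). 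Iterating gives $f(w) = f(x_1) \curlywedge \dotsb \curlywedge f(x_k)$, and as all of the $f(x_j)$ lie in the common $\preccurlyeq$-class $Q$, we obtain $f(w) = f(x_1) \in Q$.

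Next I verify the principal-filter property. Pointwise at $a \in A$: if all the $x_j(a)$ lie in a single $\preccurlyeq$-class (either $Q$ or $\Sigma$) then $w(a) = x_1(a)$ lies in that class, otherwise $w(a) = 0$. Either way $w(a) \preccurlyeq x_j(a)$, so $w \preccurlyeq x$ for every $x \in f^{-1}(Q)$. Conversely, if $x \in D(\A)$ satisfies $w \preccurlyeq x$, then a straightforward coordinate-wise case check shows $w \curlywedge x = w$ in $D(\A)$, so $f(w) = f(w) \curlywedge f(x)$; since $f(w) \in Q$ is nonzero, this forces $f(x)$ into the same $\preccurlyeq$-class as $f(w)$, and hence $f(x) \in Q$.

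For the `furthermore' part, fix $i \in \{1,\dotsc,\ell\}$. Because $\ran f \cap G_i \ne \emptyset$ there is $y \in D(\A)$ with $f(y) = g$ for some $g \in G_i$; the compatible unary operation $\lambda_{g^{-1}}$ from Definition~\ref{df:TK} is preserved by $f$, so $y' := \lambda_{g^{-1}}(y)$ lies in $D(\A)$ and $f(y') = g^{-1}g = e_i$. I then define $w_i := y' \curlywedge w$. A short computation gives $f(w_i) = e_i \curlywedge f(w) = e_i$, since $e_i$ and $f(w)$ both lie in the $\preccurlyeq$-class $Q$; pointwise, $w_i(a) \in \{y'(a),0\}$, and in the nonzero case $y'(a)$ lies in the same $\preccurlyeq$-class as $w(a)$, so $w_i \preccurlyeq w$. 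Finally, since $f(w_i) = e_i \in Q$ we have $w_i \in f^{-1}(Q)$, so the principal-filter property just established gives $w \preccurlyeq w_i$. I do not anticipate any serious obstacle: the only subtlety worth flagging is that the homomorphism $\curlywedge$ is defined only for total automatic algebras, which is harmless here because $\M$, being permutational, is total.
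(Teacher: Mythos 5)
Your proof is correct and follows essentially the same route as the paper: both take $w$ to be an iterated quasi-meet over $f^{-1}(Q)$ (the paper phrases this as repeatedly applying $\curlywedge$ to obtain a ``least'' element, you enumerate and compute the fold explicitly), both exploit that $f$ preserves $\curlywedge$, and both obtain $w_i$ as $\lambda_{g^{-1}}(x_i)\curlywedge w$. The only difference is presentational — you spell out the coordinate-wise case analysis that the paper leaves implicit.
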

\begin{proof}
The quasi-meet operation $\curlywedge \colon \M^2 \to \M$ from
Definition~\ref{def:quasi} is a homomorphism. So $D(\A) \subseteq M^A$ is
closed under~$\curlywedge$ and the map $f \colon D(\A) \to M$
preserves~$\curlywedge$ (as $f$ is $3$-locally an evaluation).

Let $x,y \in f^{-1}(Q)$. Say that $f(x) = q$ and $f(y) = r$. Then
\[
f(x \curlywedge y) = f(x) \curlywedge f(y) = q \curlywedge r = q \in Q.
\]
Thus $f^{-1}(Q)$ is also closed under~$\curlywedge$. Since $D(\A)$ is finite,
we can use $\curlywedge$ repeatedly to obtain a `least' element $w$ of
$f^{-1}(Q)$. It follows that $f(x) \in Q$ implies $w \preccurlyeq x$, for all
$x \in D(\A)$. Now assume that $x \in D(\A)$ with $w \preccurlyeq x$. Then $w
\curlywedge x = w$ and so $f(w) \curlywedge f(x) = f(w) \in Q$. This implies
that $f(x) \in Q$.

Fix $i \in \{1,\dotsc,\ell\}$ and choose $x_i \in f^{-1}(G_i)$. Say that
$f(x_i) = g \in G_i$. Then $y_i := \lambda_{g^{-1}} (x_i) \in D(\A)$ with
$f(y_i) = \lambda_{g^{-1}}(g) = e_i$, as $f$ preserves~$\lambda_{g^{-1}}$. Now
define $w_i:= y_i \curlywedge w$. Then $w_i \preccurlyeq w$ and $f(w_i) = f(y_i
\curlywedge w) = e_i \curlywedge f(w) = e_i \in Q$. So $w\preccurlyeq w_i$, by
the construction of~$w$.
\end{proof}

The homomorphism $w \colon \A \to \M$ from the claim above partitions the set $A$ into
three subsets:
\[
A_Q := w^{-1}(Q), \quad A_\Sigma := w^{-1}(\Sigma),\quad \text{and} \quad  A_0 := w^{-1}(0).
\]
If $f$ is an evaluation, then it must be given by evaluation at an element
of~$A_Q$, as $f(w) \in Q$. Since $w$ is a homomorphism and $\M$ is a total automatic algebra, it is
easy to see that $A_Q \cdot A_\Sigma \subseteq A_Q$ in~$\A$, and that all other
products in $\A$ belong to~$A_0$.

\begin{claim}\label{oneorb}
The set $A_Q$ is connected by $A_\Sigma$ in the following
sense:
\begin{quote}
For all\/ $\newi,\newip \in A_Q$, we have
     $\newi\cdot
\newj_1 \newj_2\dotsm \newj_j = \newip\cdot
\newjp_1 \newjp_2 \dotsm \newjp_k$ in~$\A$, for some $j,k \ge
0$ and some
$\newj_1,\newj_2,\dotsc,\newj_j,\newjp_1,\newjp_2,\dots,\newjp_k\in
A_\Sigma$.
\end{quote}
\end{claim}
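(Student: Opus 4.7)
My plan has two steps, both leveraging the $\curlywedge$-minimality of $w$ established in Claim~\ref{cl:lambda}.

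First I would show that $w(A_Q)$ is contained in a single component $G_{i_0}$ of $Q$, namely the one containing $f(w)$. Suppose instead that some $\tau \in A_Q$ has $w(\tau) \in Q \setminus G_{i_0}$. Define $\phi \colon A \to M$ by setting $\phi(x) := w(x)$ when $w(x) \in G_{i_0} \cup \Sigma \cup \{0\}$ and $\phi(x) := 0$ when $w(x) \in Q \setminus G_{i_0}$. A case analysis on the identity $w(x \cdot y) = w(x) \cdot w(y)$, using that $\Sigma$ acts on $Q$ by component-preserving permutations, shows $\phi \colon \A \to \M$ is a homomorphism. Since $f$ is $2$-locally an evaluation on $\{w,\phi\}$, there is $a_0 \in A$ with $f(w) = w(a_0)$ and $f(\phi) = \phi(a_0)$; as $f(w) \in G_{i_0}$, we get $w(a_0) \in G_{i_0}$, whence $\phi(a_0) = w(a_0) \in Q$, so $\phi \in f^{-1}(Q)$. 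But $(w \curlywedge \phi)(\tau) = w(\tau) \curlywedge 0 = 0 \ne w(\tau)$, giving $w \not\preccurlyeq w \curlywedge \phi$, which contradicts the minimality of $w$ in $f^{-1}(Q)$.

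With $w(A_Q) \subseteq G_{i_0}$ in hand, given $\sigma, \tau \in A_Q$ I would use the transitive abelian action of $\Sigma$ on $G_{i_0}$ provided by Claim~\ref{cl:new} to choose a word $u \in \Sigma^*$ with $w(\sigma) \cdot u = w(\tau)$, and then lift $u$ to a sequence $\vec\alpha \in A_\Sigma^*$. The equation $\sigma \cdot \vec\alpha = \tau$ holds under $w$ but may fail coordinate-wise in $\A \le \M^I$ at coordinates where $\sigma(i)$ or $\tau(i)$ strays from $G_{i_0}$. To fix this, augment both sides with high powers of letters in $A_\Sigma$, exploiting the identity $x y^{N+1} \approx x y$ of $\M$ (where $N$ is the exponent of the permutation group on $G_{i_0}$): such powers act as the identity on in-component coordinates and annihilate off-component coordinates, so the mismatches can be symmetrically cleared on both sides.

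The hard part will be the second step: one must ensure that every letter appearing in $u$ lifts to $A_\Sigma$, and that all coordinate-wise mismatches can be resolved simultaneously. I expect this to require a secondary minimality argument, in the spirit of the first step but applied to $w \restriction A_\Sigma$, perhaps together with the compatible partial operation $\K$ of Definition~\ref{df:TK} for finer control over coset structure within $G_{i_0}$.
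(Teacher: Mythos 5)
Your Step 1 (showing $w(A_Q)\subseteq G_{i_0}$, the component containing $f(w)$) is correct, and the homomorphism $\phi$ you construct by zeroing out the off-component part of $w$ does give a clean contradiction with the minimality of $w$ from Claim~\ref{cl:lambda}. However, that observation is not actually used by the paper, and your Step 2 — the step that is supposed to prove the claim — has a genuine gap that I do not think can be patched along the lines you propose. The difficulty is not merely the coordinate bookkeeping you flag. First, to lift the word $u\in\Sigma^*$ to a sequence in $A_\Sigma$ you would need every letter of $u$ to lie in $w(A_\Sigma)$, and you would need $w(A_\Sigma)$ to act transitively on $w(A_Q)$; nothing about the minimality of $w$ gives this, and in fact the transitivity of the induced action of $w(A_\Sigma)$ on $w(A_Q)$ is essentially equivalent to the statement you are trying to prove (it is a consequence of Claim~\ref{oneorb} applied through $w$), so there is a circularity. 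Second, and more fundamentally, even with a lift $\vec\alpha$ satisfying $w(\sigma\cdot\vec\alpha)=w(\tau)$, the required equation $\sigma\cdot\vec\alpha=\tau\cdot\vec\beta$ must hold as an identity between two elements of the abstract algebra $\A$. Appending powers of letters from $A_\Sigma$ does not ``annihilate'' coordinates where $\sigma$ and $\tau$ take different $Q$-values in different components of $\M$: those coordinate values just keep cycling in their own components and there is no reason they should ever coincide. Your proposed secondary minimality argument on $w\rest{A_\Sigma}$ is left entirely vague, and it is unclear to me what it would even assert, let alone how it would close this gap.

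The paper's proof is much shorter and takes a completely different route, avoiding any construction of an explicit connecting word. Let $\equiv$ be the relation on $A_Q$ that the claim asserts is total; the commuting-letters identity $x\cdot yz\approx x\cdot zy$ shows $\equiv$ is an equivalence relation. If $\equiv$ were not total, $A_Q$ would split into two nonempty disjoint unions of $\equiv$-classes $B$ and $C$, each closed under right multiplication by $A_\Sigma$. Then the two maps obtained from $w$ by zeroing it out on $B$ and on $C$ respectively are homomorphisms $x,y\in D(\A)$, and since neither is $\succcurlyeq w$ Claim~\ref{cl:lambda} forces $f(x)=f(y)=0$. But $f(w)\in Q$, so any evaluation point agreeing with $f$ on $w$ must lie in $A_Q=B\cup C$, and at every such point at least one of $x,y$ is nonzero — so $f$ cannot agree with an evaluation on $\{w,x,y\}$, contradicting $3$-locality. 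This is the argument you should aim for; your direct construction runs into obstacles that the contradiction argument simply sidesteps.
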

\begin{proof}
Define $\newi \equiv \newip$ to mean that the above relation holds. Then
$\equiv$ is an equivalence relation on~$A_Q$, as $\M$ satisfies $x\cdot yz
\approx x \cdot zy$. Suppose, by way of contradiction, that $\equiv$ is not the
total relation on~$A_Q$. Then we can partition $A_Q$ as $B \cup C$, where $B,C
\ne \emptyset$ and $B \cap C = \emptyset$, such that each of $B,C$ is a union
of $\equiv$-classes. It follows that $B \cdot A_\Sigma \subseteq B$ and $C
\cdot A_\Sigma \subseteq C$ in~$\A$.

We can now define $x, y \in D(\A)$ by
\[
x(v) =
\begin{cases}
w(v) & \text{if $v \notin B$,}\\
0 & \text{otherwise,}
\end{cases}
\quad\text{and}\quad
y(v) =
\begin{cases}
w(v) & \text{if $v \notin C$,}\\
0 & \text{otherwise.}
\end{cases}
\]
By Claim~\ref{cl:lambda}, we have $f(x)= 0 = f(y)$. So $f$ does not agree with
an evaluation on the subset $\{w,x,y\}$ of~$D(\A)$, which is a contradiction.
\end{proof}

\begin{claim} \label{cl:inGi}
Let $x \in D(\A)$ such that $f(x) \in G_i$, for some $i \in \{1,\dotsc,\ell\}$.
Then $x(A_Q) \subseteq G_i$.
\end{claim}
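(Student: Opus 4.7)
The plan is to establish the claim in three steps: first, that $x(A_Q) \subseteq Q$; second, that all values $x(\sigma)$ for $\sigma \in A_Q$ lie in a common component of~$\M$; third, that this component is~$G_i$.

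For the first step, since $f(x) \in G_i \subseteq Q$, Claim~\ref{cl:lambda} yields $w \preccurlyeq x$ (interpreted pointwise on~$A$). For any $\sigma \in A_Q$, the fact that $w(\sigma) \in Q$ then forces $x(\sigma) \in Q$ by the definition of~$\preccurlyeq$.

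For the second step, given $\sigma, \tau \in A_Q$, I would apply Claim~\ref{oneorb} to produce $\alpha_1,\dots,\alpha_j,\beta_1,\dots,\beta_k \in A_\Sigma$ with $\sigma\cdot\alpha_1\cdots\alpha_j = \tau\cdot\beta_1\cdots\beta_k$ in~$\A$, and then apply the homomorphism~$x$. Each left-to-right partial product lies in $A_Q$ (using $A_Q\cdot A_\Sigma \subseteq A_Q$) and is therefore sent by~$x$ into~$Q$ by the first step. Because $\M$ is total and permutational, this forces $x(\alpha_m), x(\beta_\ell) \in \Sigma$ (otherwise the chain would collapse to~$0$), and each such letter then acts on~$Q$ as a permutation preserving components. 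Both chains therefore stay in the component of~$x(\sigma)$ (respectively $x(\tau)$), and equality of the chains forces these components to agree.

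For the third step, since $f$ is at least $2$-locally an evaluation, it agrees with evaluation at some $a \in A$ on the pair $\{w,x\}$. From $w(a) = f(w) \in Q$ we obtain $a \in A_Q$, while $x(a) = f(x) \in G_i$; combining this with the second step yields $x(\sigma) \in G_i$ for every $\sigma \in A_Q$. No step looks especially hard, since all the machinery needed—the pointwise quasi-order from Claim~\ref{cl:lambda}, the connectedness of~$A_Q$ from Claim~\ref{oneorb}, and low-order local evaluation—is already in hand; the only real subtlety is the observation in the second step that each $x(\alpha_m)$ must lie in~$\Sigma$, which is immediate from totality combined with the first step.
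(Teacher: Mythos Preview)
Your proposal is correct and follows essentially the same route as the paper: use $w \preccurlyeq x$ to place $x(A_Q)$ in~$Q$, use $2$-local evaluation on $\{w,x\}$ to anchor one value in~$G_i$, and use Claim~\ref{oneorb} to propagate the component. The only cosmetic difference is that the paper obtains $x(A_\Sigma)\subseteq\Sigma$ directly from $w\preccurlyeq x$ rather than via your totality argument, which slightly streamlines the second step.
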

\begin{proof}
By Claim~\ref{cl:lambda}, we have $w \preccurlyeq x$ and therefore $x(A_Q)
\subseteq Q$ and $x(A_\Sigma) \subseteq \Sigma$. Since $f$ is $2$-locally an
evaluation, it agrees with an evaluation on $\{w,x\}$. So there exists $\newi
\in A_Q$ such that $x(\newi) = f(x) \in G_i$.

Now let $\newip \in A_Q$. By Claim~\ref{oneorb}, we have $\newi\cdot \newj_1
\newj_2\dotsm \newj_j = \newip\cdot \newjp_1 \newjp_2 \dotsm \newjp_k$
 in $\A$,
for some $j,k \ge 0$ and some
$\newj_1,\newj_2,\dotsc,\newj_j,\newjp_1,\newjp_2,\dots,\newjp_k\in A_\Sigma$.
So
\[
x(\newi)\cdot x(\newj_1) x(\newj_2)\dotsm x(\newj_j)
 = x(\newip)\cdot x(\newjp_1) x(\newjp_2) \dotsm x(\newjp_k) \text{ in } \M.
\]
Since $x(\newi), x(\newip) \in x(A_Q) \subseteq Q$ and $x(A_\Sigma) \subseteq
\Sigma$, the states $x(\newi)$ and $x(\newip)$ must belong to the same
connected component of~$\M$. Hence $x(\newip) \in G_i$.
\end{proof}

\begin{claim}
If $A_\Sigma = \emptyset$, then\/ $f$ is an evaluation.
\end{claim}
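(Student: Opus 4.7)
The hypothesis $A_\Sigma = \emptyset$ interacts with Claim~\ref{oneorb} to collapse $A_Q$ to a singleton, from which the conclusion follows by $2$-local evaluation. First I would observe that $f(w) \in Q$ by Claim~\ref{cl:lambda}, and since $f$ is at least $1$-locally an evaluation, there is some $\newi \in A$ with $f(w) = w(\newi)$. Hence $w(\newi) \in Q$, giving $\newi \in A_Q$, so $A_Q \neq \emptyset$.

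Next I would bound $|A_Q| \le 1$ using Claim~\ref{oneorb}. For any $\newi, \newip \in A_Q$, the claim supplies elements $\newj_1, \dotsc, \newj_j, \newjp_1, \dotsc, \newjp_k \in A_\Sigma$ with $\newi \cdot \newj_1 \dotsm \newj_j = \newip \cdot \newjp_1 \dotsm \newjp_k$ in~$\A$. Since $A_\Sigma = \emptyset$, the only possibility is $j = k = 0$, forcing $\newi = \newip$. Therefore $A_Q = \{\newi_0\}$ for a unique element~$\newi_0$.

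Finally, to show that $f$ is evaluation at $\newi_0$, I would fix an arbitrary $x \in D(\A)$ and apply $2$-local evaluation at the pair $\{w, x\}$ to obtain some $\newi \in A$ with $w(\newi) = f(w)$ and $x(\newi) = f(x)$. As $f(w) \in Q$, we have $\newi \in A_Q = \{\newi_0\}$, and therefore $f(x) = x(\newi_0)$. Since $x$ was arbitrary, $f$ is evaluation at~$\newi_0$, completing the proof. The step expected to carry the weight is Claim~\ref{oneorb}, but under the hypothesis $A_\Sigma = \emptyset$ its conclusion is essentially vacuous, so I anticipate no serious obstacle in this case.
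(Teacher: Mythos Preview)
Your proof is correct and follows essentially the same approach as the paper: use Claim~\ref{oneorb} with $A_\Sigma=\emptyset$ to force $\lvert A_Q\rvert=1$, then apply $2$-local evaluation on $\{w,x\}$ together with $f(w)\in Q$ to conclude that $f$ is evaluation at the unique element of~$A_Q$. Your explicit verification that $A_Q\neq\emptyset$ is a small bonus; the paper leaves this implicit.
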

\begin{proof}
Assume $A_\Sigma = \emptyset$. Then Claim~\ref{oneorb} gives $\abs {A_Q} = 1$.
Say that $A_Q = \{\newi\}$. We will check that $f$ is given by evaluation
at~$\newi$. Let $x \in D(\A)$. Then $f$ agrees with an evaluation on $\{w,x\}$.
But this must be evaluation at~$\newi$, since we have $f(w) \in Q$ and
$w^{-1}(Q) = A_Q = \{\newi\}$.
\end{proof}

By the previous claim, we can assume that $A_\Sigma \ne \emptyset$. Enumerate
the set $A_\Sigma$ as $\gamma_1, \gamma_2, \dotsc, \gamma_\kappa$. Since the
groups $G_1,\dotsc,G_n$ are finite, we can choose $m \in \N$ so that these
groups all have exponent dividing~$m$ (that is, they all satisfy the equation $x^m
\approx e$).
Now define the map $\widehat{\phantom{a}}\, \colon A_Q \to A_Q$ by
\[
\widehat \newi := \newi\cdot (\gamma_1)^m \dotsm (\gamma_\kappa)^m
\]
and define $\widehat A_Q := \{\, \widehat \newi \mid \newi \in A_Q \,\}$.

\begin{claim} \label{cl:hat}\
\begin{enumerate}
 \item We have $\widehat A_Q \cdot A_\Sigma \subseteq  \widehat A_Q$ in
     $\A$.
 \item Let\/ $X \subseteq D(\A)$ and let\/ $\newi \in A_Q$. If\/ $f \rest
     X$ agrees with evaluation at\/~$\newi$, then $f \rest X$ also agrees
     with evaluation at\/ $\widehat\newi$.
\end{enumerate}
\end{claim}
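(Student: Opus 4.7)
My overall plan is to handle the two parts in sequence: part (1) is an algebraic manipulation using the commuting-letters identity, and part (2) is a two-case argument on the value of $f(x)$ that exploits the choice of $m$ as a common exponent for the groups $G_1,\dotsc,G_n$.

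For part (1), the key observation is that the identity $x \cdot yz \approx x \cdot zy$ holds in $\M$ (since $\M$ has commuting letters) and therefore in $\A \in \ISP\M$. This lets me slide any $\newj \in A_\Sigma$ past the block of $(\gamma_k)^m$ factors, giving
\[
\widehat\newi \cdot \newj \;=\; \newi \cdot (\gamma_1)^m \dotsm (\gamma_\kappa)^m \cdot \newj \;=\; (\newi \cdot \newj) \cdot (\gamma_1)^m \dotsm (\gamma_\kappa)^m.
\]
Since $A_Q \cdot A_\Sigma \subseteq A_Q$ (observed just after Claim~\ref{cl:lambda}), $\newi \cdot \newj$ lies in $A_Q$, and the right-hand side is precisely $\widehat{(\newi \cdot \newj)} \in \widehat A_Q$.

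For part (2), fix $x \in X$. The hypothesis says $f(x) = x(\newi)$, and in $\M$ we compute
\[
x(\widehat\newi) \;=\; x(\newi) \cdot x(\gamma_1)^m \dotsm x(\gamma_\kappa)^m,
\]
so the goal is to show this equals $x(\newi)$. I will split on whether $f(x) = 0$ or $f(x) \in G_i$ for some $i \in \{1,\dotsc,\ell\}$; by the earlier reduction, $\ran f \subseteq \{0\} \cup \bigcup_{i=1}^\ell G_i$, so these are the only possibilities. If $f(x) = 0$ then $x(\newi) = 0$ and the whole product collapses to $0$. Otherwise $x(\newi) \in G_i$, and Claim~\ref{cl:inGi} gives $x(A_Q) \subseteq G_i$ while Claim~\ref{cl:lambda} gives $w \preccurlyeq x$, whence $x(A_\Sigma) \subseteq \Sigma$. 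By Claim~\ref{cl:new}(2) each $x(\gamma_k) \in \Sigma$ acts on $G_i$ as right multiplication by $x(\gamma_k)_{(i)} \in G_i$, so $x(\gamma_k)^m$ acts as multiplication by $(x(\gamma_k)_{(i)})^m = e_i$ because $G_i$ has exponent dividing $m$. Iterating across $k = 1,\dotsc,\kappa$ yields $x(\widehat\newi) = x(\newi) = f(x)$, as required.

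Neither part looks like a genuine obstacle: both are packaging of earlier preparation. The one place to be careful is the case split in part (2), where the reduction $\ran f \subseteq \{0\} \cup \bigcup_{i=1}^\ell G_i$ and the verification that $x(A_\Sigma) \subseteq \Sigma$ (via $w \preccurlyeq x$) are essential for the group-exponent argument to apply. Without the deliberate choice of $m$ as a common exponent, the $(\gamma_k)^m$-factors would not vanish and part (2) would fail outright; this is the content being exploited.
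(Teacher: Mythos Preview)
Your proposal is correct and follows essentially the same approach as the paper: part~(1) via the commuting-letters identity $x\cdot yz \approx x\cdot zy$ together with $A_Q \cdot A_\Sigma \subseteq A_Q$, and part~(2) by the two-case split on $f(x)=0$ versus $f(x)\in Q$ using $w\preccurlyeq x$ and the choice of~$m$. The only minor deviation is that you invoke Claim~\ref{cl:inGi} and the restriction $i\le \ell$, neither of which is actually needed (it suffices that $x(\newi)=f(x)\in G_i$ for \emph{some}~$i$ and that $x(A_\Sigma)\subseteq \Sigma$), but this does not affect correctness.
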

\begin{proof}
Part (1) follows because $A_Q \cdot A_\Sigma \subseteq A_Q$ and $\M$ satisfies
$x \cdot yz \approx x \cdot zy$. For part (2), assume $f \rest X$ agrees with
evaluation at~$\newi$ and let $x \in X$. First assume  $f(x) = 0$. Then
$x(\newi) = f(x) = 0$ and it follows easily that $x(\widehat\newi) = 0 = f(x)$.
Now assume $f(x) \ne 0$. Then $w \preccurlyeq x$. So $x(\newi) \in Q$ and
$x(A_\Sigma) \subseteq \Sigma$. Say that $x(\newi) \in G_i$. Since the exponent of $G_i$
divides~$m$, it follows by Claim~\ref{cl:new}(2) that $x(\widehat\newi) =
x(\newi) = f(x)$.
\end{proof}

\begin{claim}\label{trans2}
The set $A_\Sigma$ acts transitively on $\widehat A_Q$ in the
following sense:
\begin{quote}
For all\/ $\newi,\newip \in \widehat A_Q$, we have
     $\newi\cdot
\newj_1\newj_2\dotsm \newj_k = \newip$ in~$\A$, for some $k \ge
0$ and some $\newj_1,\newj_2,\dotsc,\newj_k \in A_\Sigma$.
\end{quote}
\end{claim}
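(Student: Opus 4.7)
The plan is to combine Claim~\ref{oneorb} with the commuting-letters identity to reduce the problem to showing that $v^m$ acts as the identity on elements of $\widehat A_Q$, where $m$ is the common group exponent chosen earlier.

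Applying Claim~\ref{oneorb} to $\newi,\newip \in \widehat A_Q \subseteq A_Q$ gives words $u = \newj_1 \dotsm \newj_j$ and $v = \newjp_1 \dotsm \newjp_k$ in $A_\Sigma^*$ with $\newi\cdot u = \newip \cdot v$ in $\A$. I propose that the word $u\,v^{m-1} \in A_\Sigma^*$ connects $\newi$ to $\newip$. Using the bracket-from-the-left convention and the oneorb identity,
\[
\newi \cdot u\,v^{m-1} = (\newi\cdot u) \cdot v^{m-1} = (\newip \cdot v) \cdot v^{m-1} = \newip \cdot v^m,
\]
so it suffices to show $\newip \cdot v^m = \newip$ in $\A$. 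Since $\M$ has commuting letters, the equation $x\cdot yz \approx x \cdot zy$ holds in $\A$ and lets me reorder the $mk$ letters following $\newip$, giving $\newip \cdot v^m = \newip \cdot \newjp_1^m \newjp_2^m \dotsm \newjp_k^m$ in $\A$.

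I then verify $\newip \cdot \newjp_1^m \dotsm \newjp_k^m = \newip$ coordinatewise in $\A \le \M^I$. If $\newip(i) = 0$ then both sides equal $0$. If $\newip(i) \in Q$ then the key claim is that $\newjp_\ell(i) \in \Sigma$ for every $\ell$. This is the one essential use of $\newip \in \widehat A_Q$: writing $\newip = \widehat{\xi} = \xi \cdot \gamma_1^m \dotsm \gamma_\kappa^m$ for some $\xi \in A_Q$, the coordinate value $\xi(i) \cdot \gamma_1(i)^m \dotsm \gamma_\kappa(i)^m$ can land in $Q$ only if $\xi(i) \in Q$ and $\gamma_{j'}(i) \in \Sigma$ for every $j'$ (otherwise some intermediate product equals $0$ in~$\M$ and the whole product vanishes). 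Since $\{\newjp_1,\dotsc,\newjp_k\} \subseteq A_\Sigma = \{\gamma_1,\dotsc,\gamma_\kappa\}$, each $\newjp_\ell(i) \in \Sigma$. With $\newip(i) \in Q$ and every $\newjp_\ell(i) \in \Sigma$, the permutational structure of $\M$ together with $G_{i'}$ having exponent dividing $m$ (Claim~\ref{cl:new}) yields $\newip(i) \cdot \newjp_\ell(i)^m = \newip(i)$ for each $\ell$, so iterating gives $\newip(i) \cdot \newjp_1(i)^m \dotsm \newjp_k(i)^m = \newip(i)$, as needed.

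The only delicate step is the coordinatewise characterization of $\widehat A_Q$ (namely, that $\newip(i) \in Q$ forces every $\gamma_{j'}(i)$ to be a letter); once this is in hand, the rest is bookkeeping with commutativity and the group-exponent identity.
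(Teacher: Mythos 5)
Your proposal is correct and follows essentially the same path as the paper's proof: apply Claim~\ref{oneorb} to get $\newi\cdot u = \newip\cdot v$, postmultiply by $v^{m-1}$, and show $\newip\cdot v^m = \newip$. The only difference is presentational: the paper disposes of the last step in one line by invoking the global identity $x\cdot y^{2m}\approx x\cdot y^m$ (which holds in $\M$ since it is permutational with commuting letters and all component groups have exponent dividing $m$), noting that $\newip\cdot\alpha^m=\newip$ for each $\alpha\in A_\Sigma$ because $\newip$ already carries an $\alpha^m$ factor from the definition of $\widehat{\phantom{a}}\,$; you instead verify $\newip\cdot v^m=\newip$ coordinatewise, using the same structural fact that $\newip=\widehat\xi$ forces $\gamma_{j'}(i)\in\Sigma$ whenever $\newip(i)\in Q$.
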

\begin{proof}
Let $\newi,\newip \in \widehat A_Q$. As $\M$ satisfies $x \cdot y^{2m} \approx
x \cdot y^m$, it follows that $\newip \cdot \alpha^m = \newip$, for all $\alpha
\in A_\Sigma$. By Claim~\ref{oneorb}, we have $\newi\cdot
\newj_1 \newj_2\dotsm \newj_j = \newip\cdot \newjp_1 \newjp_2 \dotsm \newjp_k$ in~$\A$.
So $\newi\cdot
\newj_1 \newj_2\dotsm \newj_j (\newjp_1 \newjp_2 \dotsm \newjp_k)^{m-1}
= \newip \cdot (\newjp_1 \newjp_2 \dotsm \newjp_k)^m = \newip$
in~$\A$, as required.
\end{proof}

Now define the subset $\Izero$ of $\widehat A_Q$ by
\[
\Izero := \set{\newi \in \widehat A_Q}{$(\exists x \in
f^{-1}(0))\ x(\newi)\ne 0$}.
\]
Note that $f$ cannot be evaluation at any element of~$\Izero$. We next
construct a single homomorphism $z_\Izero \in D(\A)$ to witness this fact.

\begin{claim}\label{cl:nonzero2}
There exists $z_\Izero \in D(\A)$ such that $f(z_\Izero) = 0$ and
$z_\Izero(\Izero) \subseteq Q$.
\end{claim}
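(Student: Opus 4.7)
The plan is to build $z_\Izero$ as an iterated partial join of modified witnesses, using the order $\sqsubseteq$ and partial join $\sqcup$ from Definition~\ref{def:order} together with the quasi-meet $\curlywedge$ from Definition~\ref{def:quasi}. For each $\sigma \in \Izero$ I would choose $x_\sigma \in f^{-1}(0)$ with $x_\sigma(\sigma) \ne 0$, and then replace it by a homomorphism that lies pointwise below $w$ in $\sqsubseteq$.

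The key observation is that $x_\sigma(\sigma) \ne 0$ is very restrictive. Writing $\sigma = \widehat\iota = \iota \cdot \gamma_1^m \dotsm \gamma_\kappa^m$ with $\iota \in A_Q$ and expanding the product in $\M$, the facts that a letter times anything is $0$ and a state times a state is $0$ force $x_\sigma(\iota) \in Q$ and $x_\sigma(\gamma_j) \in \Sigma$ for every $j$. Since $\M$ is permutational, this yields $x_\sigma(\sigma) \in Q$ and, crucially, $x_\sigma(A_\Sigma) \subseteq \Sigma$. Setting $y_\sigma := w \curlywedge x_\sigma \in D(\A)$, one then reads off that $y_\sigma(\tau) = w(\tau)$ for $\tau \in A_\Sigma$, $y_\sigma(\tau) \in \{0, w(\tau)\}$ for $\tau \in A_Q$, and $y_\sigma(\tau) = 0$ for $\tau \in A_0$; hence $y_\sigma \sqsubseteq w$ pointwise, $f(y_\sigma) = f(w) \curlywedge 0 = 0$, and $y_\sigma(\sigma) = w(\sigma) \in Q$.

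Enumerating $\Izero$ as $\sigma_1,\dotsc,\sigma_\ell$, I would then define
\[
z_\Izero := (\dotsb((y_{\sigma_1} \sqcup y_{\sigma_2}) \sqcup y_{\sigma_3}) \dotsb) \sqcup y_{\sigma_\ell}.
\]
Each intermediate result stays pointwise $\sqsubseteq w$, so every application of $\sqcup$ is defined; since $f$ is $3$-locally an evaluation it preserves the ternary graph of $\sqcup$, giving $f(z_\Izero) = 0 \sqcup \dotsb \sqcup 0 = 0$; and at each $\sigma = \sigma_j$ the join already reaches the common upper bound $w(\sigma_j) \in Q$, so $z_\Izero(\Izero) \subseteq Q$, as required.

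I expect the main obstacle to be verifying the pointwise comparability $y_\sigma \sqsubseteq w$, since $\sqsubseteq$ relates neither distinct letters nor $0$ to any letter. This is precisely where the padding by $\gamma_1^m \dotsm \gamma_\kappa^m$ built into $\widehat\iota$, combined with the permutational structure of $\M$, does essential work: it is the only guarantee that $x_\sigma$ sends all of $A_\Sigma$ into $\Sigma$, and without this the iterated partial join $\sqcup$ would simply fail to be defined.
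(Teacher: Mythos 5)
Your proposal matches the paper's proof essentially step for step: choose $x_\sigma \in f^{-1}(0)$ with $x_\sigma(\sigma) \ne 0$, use $\sigma \in \widehat A_Q$ to force $x_\sigma(\sigma) \in Q$ and $x_\sigma(A_\Sigma) \subseteq \Sigma$, truncate by $w \curlywedge x_\sigma$ to get something $\sqsubseteq w$ in the fibre of $0$ under $f$, and then take the iterated partial join. The extra detail you give (expanding $\widehat\iota = \iota\cdot\gamma_1^m\dotsm\gamma_\kappa^m$ to justify $x_\sigma(A_\Sigma)\subseteq\Sigma$) is precisely the ``it follows easily'' the paper glosses over, so this is a correct proof along the same route.
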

\begin{proof}
We use the order $\sqsubseteq$ and associated partial join $\sqcup$ from
Definition~\ref{def:order}. Note that $D(\A)$ is closed under~$\sqcup$ and that
$f$ preserves~$\sqcup$.

Fix $\newi \in \Izero$. We first show that there exists $z_\newi \in f^{-1}(0)$
with $z_\newi(\newi) \in Q$ and $z_\newi \sqsubseteq w$. By the definition of
$\Izero$, there exists $x_\newi \in f^{-1}(0)$ with $x_\newi(\newi)\ne 0$.
Since $\newi \in \widehat A_Q$, it follows easily that $x_\newi(\newi) \in Q$
and $x_\newi(A_\Sigma) \subseteq \Sigma$. Thus, if we put $z_\newi := w
\curlywedge x_\newi$, then $z_\newi(\newi)\in Q$ and $z_\newi \sqsubseteq w$.
Finally, since $f$ preserves~$\curlywedge$, we get $f(z_\newi) = f(w
\curlywedge x_\newi) = f(w) \curlywedge f(x_\newi) = f(w) \curlywedge 0 = 0$.

Now enumerate $\Izero = \{\newi_1,\dots,\newi_k\}$.  Because $z_{\sigma_i}
\sqsubseteq w$, for all $i \in \{1,\dotsc,k\}$, we can define $z_\Izero :=
(\dotsb(( z_{\newi_1}\sqcup z_{\newi_2}) \sqcup z_{\newi_3}) \dotsb ) \sqcup
z_{\newi_k}$ in $D(\A)$. Then $f(z_\Izero)=0$ since $f$ preserves~$\sqcup$, and
$z_\Izero(\Izero) \subseteq Q$ by construction.
\end{proof}

\begin{definition}
Using the homomorphisms $w_i$ from Claim~\ref{cl:lambda} and $z_\Izero$ from
Claim~\ref{cl:nonzero2}, we define the subset $C$ of $A_Q$ by
\[
\Iast := \widehat{A}_Q \cap \Big(\bigcap_{i=1}^\ell w_i^{-1}(e_i)\Big) \cap z_\Izero^{-1}(0).
\]
For $i \in \{1,\dotsc,\ell\}$, define
\[
Y_i = \set{y \in D(\A)}{$f(y)=e_i$ and $w \preccurlyeq y \preccurlyeq w$}.
\]
Let $Y := Y_1 \cup \dots \cup Y_\ell$.
\end{definition}

\begin{claim}\label{cl:eval}
If\/ $f\rest Y$ agrees with evaluation at some $\newi \in
\Iast$, then $f$ is an evaluation.
\end{claim}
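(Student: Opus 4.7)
The plan is to upgrade the hypothesis $f(y)=y(\newi)$ for $y\in Y$ to the same equality for every $x\in D(\A)$; this will show that $f$ is evaluation at~$\newi$. Since we have already reduced to $\ran f \subseteq Q\cup\{0\}$, it suffices to handle the two cases $f(x)=0$ and $f(x)\in Q$ separately.

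The zero case is essentially forced by the bookkeeping built into~$\Iast$. Because $\newi\in\Iast\subseteq z_\Izero^{-1}(0)$ whereas $z_\Izero(\Izero)\subseteq Q$ by Claim~\ref{cl:nonzero2}, we must have $\newi\notin\Izero$; unpacking the definition of $\Izero$ then gives $x(\newi)=0$ for every $x\in f^{-1}(0)$.

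The main work is the nonzero case. Suppose $f(x)=g\in G_i$ with $i\in\{1,\dotsc,\ell\}$. The strategy is to reshape $x$ into an element of the test set $Y_i$ so that the hypothesis on $f\rest Y$ pins down $x(\newi)$. Concretely, I would set $x':=\lambda_{g^{-1}}(x)$ and $y:=x'\curlywedge w$; both live in $D(\A)$ since $\lambda_{g^{-1}}$ and $\curlywedge$ are compatible (Definitions~\ref{df:TK} and~\ref{def:quasi}). Preservation of these operations by $f$ (Lemma~\ref{lem:pres}) yields $f(x')=e_i$ and then $f(y)=e_i\curlywedge f(w)=e_i$, using that $f(w)\in Q$. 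A type-by-type check on $A_Q$, $A_\Sigma$ and $A_0$ shows that $y$ agrees with $x'$ on $A_Q\cup A_\Sigma$ and vanishes on $A_0$, giving $w\preccurlyeq y\preccurlyeq w$; hence $y\in Y_i$. By Claim~\ref{cl:inGi} we have $x(A_Q)\subseteq G_i$, so $x'(\newi)=g^{-1}\cdot x(\newi)$, and since $\newi\in\widehat A_Q\subseteq A_Q$ we also get $y(\newi)=x'(\newi)$. Applying the hypothesis to $y$ produces
\[
g^{-1}\cdot x(\newi)=y(\newi)=f(y)=e_i,
\]
whence $x(\newi)=g=f(x)$, as desired.

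The main obstacle is spotting the correct construction of~$y$: translation by $\lambda_{g^{-1}}$ alone normalizes the $f$-value to $e_i$ but can leave $x'$ nonzero on $A_0$, while quasi-meeting with $w$ alone leaves the value at $\newi$ in the wrong coset. Composing the two compatible operations in this order produces an element whose $f$-value and `type-shape' simultaneously match those of the witness $w_i$, which is precisely what membership in $Y_i$ requires; once that is achieved, the hypothesis does the rest.
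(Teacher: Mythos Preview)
Your proof is correct and follows essentially the same route as the paper: the zero case via $\newi\notin\Izero$, and the nonzero case by constructing $y=\lambda_{g^{-1}}(x)\curlywedge w\in Y_i$ and applying the hypothesis to recover $x(\newi)=g$. Your type-by-type verification that $w\preccurlyeq y\preccurlyeq w$ is slightly more explicit than the paper's one-line justification, but the argument is the same.
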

\begin{proof}
Assume $f\rest Y$ is given by evaluation at~$\newi$, for some
$\newi \in \Iast$. Let $x \in D(\A)$. We will check that $f(x)
= x(\newi)$.

\smallskip

\emph{Case 1:} $f(x) = 0$. Since $\newi \in \Iast \subseteq z_\Izero^{-1}(0)$,
we have $\newi \notin \Izero$, by Claim~\ref{cl:nonzero2}. Since $\newi \in
\Iast \subseteq \widehat A_Q$, the definition of $\Izero$ ensures that
$x(\newi) = 0 = f(x)$.

\smallskip

\emph{Case 2:} $f(x) \in G_i$, for some $i \in \{1,\dotsc,\ell\}$. Say that
$f(x) = g \in G_i$. Define $y := \lambda_{g^{-1}}(x)\curlywedge w \preccurlyeq
w$. Then $f(y) = e_i \curlywedge f(w) = e_i$ and so $w \preccurlyeq y$. Thus $y
\in Y_i$, giving $e_i = f(y) = y(\newi)$. By Claim~\ref{cl:inGi}, we have
$x(\sigma) \in G_i$. Therefore
\[
e_i = y(\newi) = \lambda_{g^{-1}}(x(\sigma))\curlywedge w(\sigma) = g^{-1}x(\sigma),
\]
and so $f(x) = g = x(\newi)$, as required.
\end{proof}

\begin{claim}\label{cl:smallX}
Let\/ $X \subseteq D(\A)$ with $\abs X \le n$. Then there exists $\newi \in
\Iast$ such that $f\rest X$ agrees with evaluation at~$\newi$.
\end{claim}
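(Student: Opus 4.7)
The plan is to augment $X$ with a small fixed set of auxiliary homomorphisms, apply the hypothesis that $f$ is $(2n+1)$-locally an evaluation to the enlarged set, and then pass from the witness to its hat to land in $\Iast$.

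Concretely, I would set $X' := X \cup \{w_1,\dots,w_\ell,z_\Izero\}$. Since $\ell \le n$, we have $\abs{X'} \le n + \ell + 1 \le 2n+1$, so the hypothesis on $f$ yields some $a \in A$ with $f(x) = x(a)$ for every $x \in X'$. In particular $w_i(a) = e_i \in Q$ for each $i \in \{1,\dots,\ell\}$ and $z_\Izero(a) = 0$. Because $w \preccurlyeq w_i \preccurlyeq w$, the homomorphisms $w$ and $w_i$ have matching $\preccurlyeq$-type on each input, so $w_i(a) \in Q$ forces $w(a) \in Q$; thus $a \in A_Q$ and $\widehat a \in A_Q$ is well defined.

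Next I would verify $\widehat a \in \Iast$. Membership in $\widehat A_Q$ is immediate from the definition. For each $i \in \{1,\dots,\ell\}$, since $w_i$ sends $A_\Sigma$ into $\Sigma$ (same $\preccurlyeq$-type as $w$), Claim~\ref{cl:new}(2) gives
\[
w_i(\widehat a) = w_i(a) \cdot w_i(\gamma_1)^m \cdots w_i(\gamma_\kappa)^m = e_i \ast (w_i(\gamma_1)_{(i)})^m \ast \cdots \ast (w_i(\gamma_\kappa)_{(i)})^m = e_i,
\]
using that $G_i$ has exponent dividing $m$. A similar computation, together with $z_\Izero(a) = 0$, shows $z_\Izero(\widehat a) = 0$. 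Hence $\widehat a \in \Iast$.

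Finally, since $f \rest {X'}$ agrees with evaluation at $a$, so does $f \rest X$, and Claim~\ref{cl:hat}(2) then upgrades this to the statement that $f \rest X$ agrees with evaluation at $\widehat a \in \Iast$, finishing the proof. The only subtlety is the arithmetic bound $n + \ell + 1 \le 2n+1$: it is precisely this bound that forces the argument to avoid inserting $w$ itself into $X'$ and instead exploit the equivalence $w \preccurlyeq w_i \preccurlyeq w$ to transfer information about $a$ from the $w_i$'s back to $w$.
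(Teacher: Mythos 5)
Your proof is correct and follows essentially the same route as the paper's: both augment $X$ to $X' := X \cup \{w_1,\dots,w_\ell,z_\Izero\}$, invoke the $(2n+1)$-local evaluation hypothesis (with the bound $\ell \le n$), identify that the witness $a$ lies in $A_Q$ via the $\preccurlyeq$-equivalence of $w$ with the $w_i$, and then pass to $\widehat a$. The one cosmetic difference is that the paper first applies Claim~\ref{cl:hat}(2) to all of $X'$, and then simply reads off $w_i(\widehat a) = f(w_i) = e_i$ and $z_\Izero(\widehat a) = f(z_\Izero) = 0$ from the fact that $w_i, z_\Izero \in X'$, which gives $\widehat a \in \Iast$ with no further computation; you instead verify $w_i(\widehat a) = e_i$ and $z_\Izero(\widehat a) = 0$ by directly redoing the calculation that underlies Claim~\ref{cl:hat}(2), and only afterwards apply that claim to $X$. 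Both are valid, but the paper's ordering avoids the duplicated computation.
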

\begin{proof}
As $f$ is $(2n+1)$-locally an evaluation, there is $\newip\in A$ such that $f$
agrees with evaluation at $\newip$ on $X' := X \cup \{w_1,\dots,w_\ell,
z_\Izero\}$. Since $w_1(\newip) = f(w_1) = e_1 \in G_1$, we have $\newip \in
A_Q$. So $f$ also agrees with evaluation at $\newi := \widehat\newip$ on $X'$, by
Claim~\ref{cl:hat}. Because $e_i = f(w_i)=w_i(\newi)$, we get
$\newi\in w_i^{-1} (e_i)$. Because $0 = f(z_\Izero) =
z_\Izero(\newi)$, we get $\newi \in z_\Izero^{-1}(0)$. Thus
$\newi \in \Iast$.
\end{proof}

\begin{claim} \label{cl:inHi}
For all $i \in \{1,\dotsc,\ell\}$ and $y \in Y_i$, we have $y(\Iast) \subseteq
H_i$.
\end{claim}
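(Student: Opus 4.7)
The plan is to use the compatible binary partial operation $\K$ from Definition \ref{df:TK}(2) as an $H_i$-detector. Recall that, for $u,v$ in a common $G_j$, we have $u \K v = u$ when $u^{-1}v \in H_j$ and $u \K v = 0$ otherwise. So pointwise combining $w_i$ and $y$ via $\K$ should produce a homomorphism that equals $e_i$ at each $\sigma \in C$ with $y(\sigma) \in H_i$ and equals $0$ at the remaining $\sigma$; Claim \ref{cl:inGi} will then rule out the latter case.

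Fix $\sigma \in C$ and let $z := w_i \K y$ (pointwise on $A$). The first step is to verify $z \in D(\A)$. Since both $w_i$ and $y$ lie in $Y_i$, they satisfy $w \preccurlyeq w_i \preccurlyeq w$ and $w \preccurlyeq y \preccurlyeq w$, so they respect the partition $A = A_Q \du A_\Sigma \du A_0$. Applying Claim \ref{cl:inGi} separately to $w_i$ and to $y$ (each with $f$-value $e_i \in G_i$), both send $A_Q$ into $G_i$; they send $A_\Sigma$ into $\Sigma$ and $A_0$ to $0$. Hence each pair $(w_i(a), y(a))$ lies in $G_i^2 \cup \Sigma^2 \cup \{(0,0)\}$, which is the domain of $\K$, and compatibility of $\K$ then gives $z \in D(\A)$.

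Next, since $f$ is at least $2$-locally an evaluation, Lemma \ref{lem:pres} tells us it preserves $\K$, so
\[
f(z) = f(w_i) \K f(y) = e_i \K e_i = e_i \in G_i.
\]
Now Claim \ref{cl:inGi} applied to $z$ yields $z(A_Q) \subseteq G_i$, so in particular $z(\sigma) \ne 0$. But $\sigma \in C \subseteq w_i^{-1}(e_i)$, whence $z(\sigma) = e_i \K y(\sigma)$, which by definition of $\K$ is $e_i$ if $y(\sigma) \in H_i$ and $0$ otherwise. Non-vanishing therefore forces $y(\sigma) \in H_i$, as required. There is no real obstacle beyond spotting $\K$ as the right tool; the only subtle point is that $w_i$ and $y$ must land in the \emph{same} component group $G_i$ on $A_Q$, which is precisely the content of Claim \ref{cl:inGi}.
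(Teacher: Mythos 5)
Your proof is correct and takes essentially the same approach as the paper: use the compatible partial operation $\K$ to combine $y$ with $w_i$, apply Claim~\ref{cl:inGi} to conclude the result lands in $G_i$ at each $\sigma \in \Iast$, and read off $y(\sigma)\in H_i$ from the definition of $\K$ and $w_i(\sigma)=e_i$. The only differences are cosmetic — you write $w_i \K y$ where the paper writes $y \K w_i$, and your remark about ``$2$-locally'' should read ``$3$-locally'' (the graph of the binary partial operation $\K$ is a ternary relation), though this is harmless since the standing hypothesis is $\max(4,2n+1)$-local evaluation.
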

\begin{proof}
Fix $i \in \{1,\dotsc,\ell\}$ and $y \in Y_i$. Let $\newi \in \Iast$. We shall
use the  binary partial operation $\K$ from Definition~\ref{df:TK}. Since $y, w_i \in
Y_i$, we have $f(y), f(w_i) \in G_i$ and $w \preccurlyeq y, w_i \preccurlyeq w$.
It follows from Claim~\ref{cl:inGi}
that $(y, w_i) \in \dom {\K}$ in
$D(\A)$. So we can define $x := y \K w_i \in D(\A)$ with
\[
f(x) = f(y \K w_i) = f(y) \K f(w_i) = e_i \K e_i = e_i.
\]
As $\newi \in A_Q$, this implies that $x(\newi) \in G_i$, using
Claim~\ref{cl:inGi} again. As $\newi \in \Iast$, we have $w_i(\newi) = e_i$.
Therefore
\[
y(\newi) \K e_i = y(\newi) \K w_i(\newi) = x(\newi) \in G_i,
\]
whence $y(\newi) \in H_i$.
\end{proof}

We remark in passing that at this point we have already accumulated enough
information to prove Corollary~\ref{thm:perm}. (Assume $|\Sigma|=1$. For all $i
\in \{1,\dotsc,\ell\}$, we have $\abs{H_i}=1$ and so $y(\Iast)= \{e_i\}$, for
all $y \in Y_i$, by Claims~\ref{cl:smallX} and~\ref{cl:inHi}. Thus $f \rest Y$
agrees with evaluation at any $\sigma \in \Iast$, whence $f$ is an evaluation
by Claim~\ref{cl:eval}.)

\goodbreak

\begin{definition}\
\begin{enumerate}
\item Let $\scrM$ denote the $Y\times \Iast$ matrix over $Q$ whose entry at
    position $(y,\newi)$ is $y(\newi)$. For $i \in \{1,\dotsc,\ell\}$ and
    $y \in Y_i$, the row of $\scrM$ at position $y$ is $y\rest \Iast \in
    H_i^{\Iast}$, by Claim~\ref{cl:inHi}. For each $\newi \in \Iast$, the
    column of $\scrM$ at position $\newi$ belongs to $H_1^{Y_1} \times
    \cdots \times H_\ell^{Y_\ell}$.
\item Partition $\scrM$ via $Y=Y_1\cup\cdots\cup Y_\ell$, and let $\scrM_i$
    denote the corresponding $Y_i\times \Iast$ submatrix of~$\scrM$ with
    entries in $H_i$.
\end{enumerate}
\end{definition}

\begin{claim} \label{cl:cols-coset}
The columns of\/ $\scrM$ form a coset of a subgroup of $H_1^{Y_1}
     \times \cdots \times H_\ell^{Y_\ell}$.
\end{claim}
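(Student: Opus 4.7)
The plan is to invoke the standard fact that a nonempty subset $S$ of a group is a coset of a subgroup if and only if $S$ is closed under the Mal'cev operation $p(x,y,z) = x y^{-1} z$. The set of columns of $\scrM$ is nonempty since $\Iast$ is nonempty (apply Claim~\ref{cl:smallX} with $X = \emptyset$), and its columns already lie in $H_1^{Y_1} \times \cdots \times H_\ell^{Y_\ell}$ by Claim~\ref{cl:inHi}. So it will suffice to show that, given $\sigma_1, \sigma_2, \sigma_3 \in \Iast$, there exists $\sigma' \in \Iast$ such that
\[
y(\sigma') = y(\sigma_1) * y(\sigma_2)^{-1} * y(\sigma_3)
\]
in $G_i$, for every $i \in \{1,\dots,\ell\}$ and every $y \in Y_i$.

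To construct such a $\sigma'$, I would first use transitivity (Claim~\ref{trans2}) to pick $\beta_1,\dots,\beta_q \in A_\Sigma$ with $\sigma_2 \cdot \beta_1 \cdots \beta_q = \sigma_3$ in $\A$, and then set $\sigma' := \sigma_1 \cdot \beta_1 \cdots \beta_q$. For the column identity, note that any $y \in Y_i$ satisfies $y(A_\Sigma) \subseteq \Sigma$ (since $w \preccurlyeq y \preccurlyeq w$) and $y(A_Q) \subseteq G_i$ (Claim~\ref{cl:inGi}). Iterating Claim~\ref{cl:new}(2) along the word $\beta_1 \cdots \beta_q$ then gives
\[
y(\sigma_3) = y(\sigma_2) * y(\beta_1)_{(i)} * \cdots * y(\beta_q)_{(i)} \quad\text{and}\quad y(\sigma') = y(\sigma_1) * y(\beta_1)_{(i)} * \cdots * y(\beta_q)_{(i)}.
\]
Solving the first for the product of the $y(\beta_j)_{(i)}$ and substituting into the second yields the desired identity $y(\sigma') = y(\sigma_1) * y(\sigma_2)^{-1} * y(\sigma_3)$.

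It then remains to verify that $\sigma' \in \Iast$ by checking the three defining conditions. Membership in $\widehat A_Q$ is immediate from Claim~\ref{cl:hat}(1). Each equation $w_i(\sigma') = e_i$ follows from the column identity applied to $y = w_i \in Y_i$, since $w_i(\sigma_j) = e_i$ for $j = 1, 2, 3$ forces $w_i(\sigma') = e_i * e_i^{-1} * e_i = e_i$. Finally, $z_\Izero(\sigma') = 0$ because $z_\Izero(\sigma_1) = 0$ and the absorbing element $0$ of $\M$ then swallows the remaining factors $z_\Izero(\beta_j)$. The step I expect to be most delicate is the $w_i$-check: it is precisely where the abelian group structure on $G_i$ provided by Claim~\ref{cl:new} is needed to collapse the Mal'cev combination $e_i * e_i^{-1} * e_i$ back to $e_i$, pinning $\sigma'$ inside $\Iast$.
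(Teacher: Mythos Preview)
Your proof is correct and follows essentially the same approach as the paper: both use the Mal'cev characterization of cosets, use Claim~\ref{trans2} to transport one index to another via a word in $A_\Sigma$, and verify membership in $\Iast$ by checking the three defining conditions. The only cosmetic difference is ordering: the paper first checks $\newir \in \Iast$ (computing $w_i(\newir)=e_i$ directly from $w_i(\newip)=w_i(\newi)=e_i$) and then derives the column identity, whereas you derive the identity $y(\sigma')=y(\sigma_1)y(\sigma_2)^{-1}y(\sigma_3)$ first and then specialize it to $y=w_i$ to get $w_i(\sigma')=e_i$; both are equally easy and rest on the same abelian-group computation from Claim~\ref{cl:new}.
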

\begin{proof}
It suffices to prove that the set of columns is closed under the Mal'cev
operation of $H_1^{Y_1} \times \cdots \times H_\ell^{Y_\ell}$. So let
$\newi,\newip,\newiq \in \Iast$ and let $\col\newi$, $\col\newip$, $\col\newiq$
denote the associated columns of $\scrM$. We want to find $\newir \in \Iast$
such that $\col\newi\col\newip^{-1}\col\newiq = \col\newir$, computed in
$H_1^{Y_1}\times \cdots \times H_\ell^{Y_\ell}$.

By Claim~\ref{trans2}, we can find $\newj_1,\dots,\newj_k \in A_\Sigma$ such
that $\newi = \newip\cdot \newj_1\cdots
\newj_k$ in~$\A$.  Define $\newir :=
\newiq\cdot\newj_1\cdots \newj_k \in A$.  We will first show that $\theta \in \Iast$.
Clearly $\theta \in \widehat{A}_Q$, by Claim~\ref{cl:hat}(1), and
$z_\Izero(\newir) = z_\Izero(\newiq)\cdot z_\Izero(\newj_1)\cdots
z_\Izero(\newj_k) = 0$, as $\rho \in z_\Izero^{-1}(0)$. So we just need to
check that $w_i(\newir)=e_i$, for $i \in \{1,\dotsc,\ell\}$.

Let $i \in \{1,\dotsc,\ell\}$ and define $a_j := w_i(\newj_j) \in \Sigma$, for each $j \in \{1,\dotsc,k\}$.  Then
\[
w_i(\newir)
 = w_i(\newiq)\cdot a_1\dotsb a_k
 = e_i\cdot a_1 \dotsb a_k = w_i(\newip)\cdot a_1 \dotsb a_k = w_i(\newi) = e_i.
\]
Thus $\newir \in \Iast$.

Now let $y \in Y_i$, for some $i \in \{1,\dotsc,\ell\}$. It remains to check
that we have $y(\newi)y(\newip)^{-1}y(\newiq) = y(\newir)$ in the group~$G_i$.
Recall from Claim~\ref{cl:new} that the map $-_{(i)} \colon \Sigma \to G_i$
satisfies $g \cdot a = ga_{(i)}$, for all $g \in G_i$ and $a \in \Sigma$. (The
left side is evaluated in the automatic algebra $\M$ and the right side in the
group $G_i$.) For each $j \in \{1,\dotsc,k\}$, define $b_j := y(\newj_j) \in
\Sigma$ with $b_{j(i)} := (b_j)_{(i)} \in \Sigma_{(i)} \subseteq G_i$.
Calculating in the abelian group $G_i$, we get
\begin{align*}
y(\newi)y(\newip)^{-1}y(\newiq)
 &= y(\newip\cdot \newj_1\cdots \newj_k) y(\newip)^{-1}y(\newiq)\\
 &= y(\newip)b_{1(i)} \dotsb b_{k(i)}y(\newip)^{-1}y(\newiq)\\
 &= y(\newiq)b_{1(i)}\dotsb b_{k(i)} = y(\rho \cdot \newj_1\cdots \newj_k) = y(\newir),
\end{align*}
as required.
\end{proof}

Note that the foregoing analysis assumed only that $\M$ is permutational with
commuting letters. At this point we introduce the further assumption that $\M$
is letter-affine. So $\Sigma_{(i)}$ is a coset of $H_i$ in $G_i$, by
Claim~\ref{cl:SigmaCoset}. This means that $\Sigma_{(i)}$ is closed under the
Mal'cev operation $p_i$ on $G_i$ given by $p_i(x,y,z) = xy^{-1}z$. The next
general lemma shows that $p_i$ extends to a compatible partial operation
on~$\M$.

\begin{claim}\label{cl:extends}
Let\/ $i \in \{1,\dotsc,n\}$ and assume that\/ $\phi \colon (G_i)^k \to G_i$ is
a group homomorphism with $\phi\big((\Sigma_{(i)})^k\big) \subseteq
\Sigma_{(i)}$. Then $\phi$ extends to a compatible partial operation $\psi
\colon (G_i)^k \cup \Sigma^k \cup \{\hat 0\} \to M$ on~$\M$ with
$\psi(\Sigma^k) \subseteq \Sigma$.
\end{claim}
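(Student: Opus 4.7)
The plan is to construct $\psi$ piecewise on the three parts of its proposed domain and then verify compatibility by a short case analysis on coordinate-wise products in $\M^k$. On $G_i^k$, I would set $\psi := \phi$, and set $\psi(\hat 0) := 0$. For each $\vec a = (a_1,\dots,a_k) \in \Sigma^k$, the hypothesis $\phi\bigl((\Sigma_{(i)})^k\bigr) \subseteq \Sigma_{(i)}$ says that $\phi(a_{1(i)},\dots,a_{k(i)}) \in \Sigma_{(i)}$, and by definition $\Sigma_{(i)}$ is the image of the map ${-}_{(i)}\colon \Sigma \to G_i$ from Claim~\ref{cl:new}. So I can pick any letter $b \in \Sigma$ with $b_{(i)} = \phi(a_{1(i)},\dots,a_{k(i)})$ and declare $\psi(\vec a) := b$; this forces $\psi(\vec a) \in \Sigma$ and $\psi(\vec a)_{(i)} = \phi(a_{1(i)},\dots,a_{k(i)})$, which is the only property of the choice that I will use.

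Next I would check that the domain $D := G_i^k \cup \Sigma^k \cup \{\hat 0\}$ is closed under coordinate-wise multiplication in $\M^k$: the three pieces are pairwise disjoint, and a quick case analysis shows that products within $G_i^k$ or within $\Sigma^k$ give $\hat 0$, any product involving $\hat 0$ gives $\hat 0$, a product of $\vec x \in G_i^k$ with $\vec y \in \Sigma^k$ lies in $G_i^k$ (because every letter acts as a permutation on the component $G_i$), and the symmetric product $\Sigma^k \cdot G_i^k$ is $\hat 0$. I would then verify $\psi(\vec u \cdot \vec v) = \psi(\vec u) \cdot \psi(\vec v)$ on $D$. All cases except the mixed case $\vec x \in G_i^k$ and $\vec y \in \Sigma^k$ reduce to $0 = 0$ on both sides. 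For that mixed case, Claim~\ref{cl:new}(2) gives $\vec x \cdot \vec y = \vec x * \vec y_{(i)}$ in $G_i^k$, where $\vec y_{(i)} := (y_{1(i)},\dots,y_{k(i)})$; using that $\phi$ is a group homomorphism and that $\psi(\vec y)_{(i)} = \phi(\vec y_{(i)})$, one computes
\[
\psi(\vec x \cdot \vec y) = \phi(\vec x * \vec y_{(i)}) = \phi(\vec x) * \phi(\vec y_{(i)}) = \phi(\vec x) * \psi(\vec y)_{(i)} = \psi(\vec x) \cdot \psi(\vec y),
\]
invoking Claim~\ref{cl:new}(2) once more at the final step.

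There is no serious obstacle. The only delicate point is the definition of $\psi$ on $\Sigma^k$: one must choose a letter $b$ realising the right group element of $G_i$, and this is exactly what the assumption $\phi\bigl((\Sigma_{(i)})^k\bigr) \subseteq \Sigma_{(i)}$ guarantees. Everything else is bookkeeping, since the interactions between $G_i$, $\Sigma$, and $0$ in $\M$ are tightly constrained by the permutational structure of $\M$.
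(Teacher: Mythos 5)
Your proposal is correct and takes essentially the same approach as the paper: the paper's entire proof consists of the single sentence defining $\psi$ on $\Sigma^k$ by choosing any $b \in \Sigma$ with $b_{(i)} = \phi\bigl((a_1)_{(i)},\dots,(a_k)_{(i)}\bigr)$, leaving the extension on $G_i^k$ and $\{\hat 0\}$ and the compatibility verification implicit. Your write-up just spells out that verification, and the case analysis you give is accurate.
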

\begin{proof}
For $a_1,\dotsc,a_k \in \Sigma$, choose $\psi(a_1,\dotsc,a_k)$ to be any $b \in
\Sigma$ such that $b_{(i)} = \phi((a_1)_{(i)},\dotsc,(a_k)_{(i)})$.
\end{proof}

We shall also use this claim to extend certain group endomorphisms of $H_i$
to compatible partial operations on~$\M$. To this end, define $n_i := |G_i /
H_i|$, pick a fixed element $a_i \in \Sigma_{(i)}$ and define $u_i := a_i^{n_i}
\in H_i$.

\begin{claim}\label{cl:endextends}
Let\/ $\phi \in \End {H_i}$ with $\phi(u_i) = u_i$. Then $\phi$ extends to a
compatible partial operation $\psi \colon G_i \cup \Sigma \cup \{\hat 0\} \to
M$ on~$\M$ with $\psi(G_i) \subseteq G_i$ and $\psi(\Sigma) \subseteq \Sigma$.
\end{claim}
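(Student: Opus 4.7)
The plan is to reduce to Claim~\ref{cl:extends} by first extending $\phi$ from $H_i$ to a group endomorphism $\bar\phi$ of $G_i$ that maps $\Sigma_{(i)}$ into itself, and then applying that claim with $k=1$ to produce the desired compatible partial operation $\psi$ on $\M$.

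The first step is to fix a normal form for elements of $G_i$. Because the letter-affine assumption makes $\Sigma_{(i)} = a_i H_i$ (Claim~\ref{cl:SigmaCoset}) and $\Sigma_{(i)}$ generates $G_i$ (Claim~\ref{cl:new}(1)), the quotient $G_i/H_i$ is cyclic, generated by $a_i H_i$, and has order exactly $n_i$ by definition. Consequently every element of $G_i$ has a unique normal form $a_i^k h$ with $0 \le k < n_i$ and $h \in H_i$. I would then define $\bar\phi \colon G_i \to G_i$ by $\bar\phi(a_i^k h) := a_i^k \phi(h)$, which visibly extends $\phi$ (take $k=0$).

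Checking that $\bar\phi$ is a group homomorphism is routine except when the exponents carry: if $k + k' \ge n_i$ then one must rewrite $a_i^k h \cdot a_i^{k'} h' = a_i^{k+k'-n_i}(u_i h h')$ before passing to normal form, and the equality $\bar\phi(a_i^k h \cdot a_i^{k'} h') = \bar\phi(a_i^k h)\,\bar\phi(a_i^{k'} h')$ then reduces exactly to $\phi(u_i) = u_i$. So the hypothesis on $\phi$ is precisely what is needed, and nothing more. Coset-preservation is automatic: for $a_i h \in \Sigma_{(i)}$, one has $\bar\phi(a_i h) = a_i \phi(h) \in a_i H_i = \Sigma_{(i)}$.

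Finally, Claim~\ref{cl:extends} applied to $\bar\phi$ (with $k=1$) supplies a compatible partial operation $\psi \colon G_i \cup \Sigma \cup \{\hat 0\} \to M$ with $\psi \rest{G_i} = \bar\phi$ and $\psi(\Sigma) \subseteq \Sigma$. Since $\bar\phi$ extends $\phi$, so does $\psi$, and $\psi(G_i) = \bar\phi(G_i) \subseteq G_i$, completing the proof. The only real obstacle is conceptual: spotting that the single condition $\phi(u_i) = u_i$ is exactly the compatibility needed for the normal-form formula to define a group endomorphism of $G_i$ over $H_i$ subject to the relation $a_i^{n_i} = u_i$. Once this is noticed, everything reduces to Claim~\ref{cl:extends}.
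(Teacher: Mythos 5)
Your proof is correct and essentially identical to the paper's. Both reduce to Claim~\ref{cl:extends} via an endomorphism $\xi$ of $G_i$ defined by $\xi(a_i^t h) := a_i^t\phi(h)$ and verify that the single hypothesis $\phi(u_i)=u_i$ is exactly what makes this work (the paper phrases it as a well-definedness check over all $t\in\Z$, you as a homomorphism check with carries on the unique normal form $0\le k<n_i$ — the same computation either way), concluding with $\xi(\Sigma_{(i)})\subseteq\Sigma_{(i)}$ from $\Sigma_{(i)}=a_iH_i$.
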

\begin{proof}
Using Claim~\ref{cl:extends}, it suffices to show that $\phi$ extends to an
endomorphism $\xi$ of $G_i$ with $\xi(\Sigma_{(i)}) \subseteq \Sigma_{(i)}$. We
shall check that we can take $\xi(g) := a_i^t\phi(h)$, where $t \in \Z$ and $h
\in H_i$ are such that $g = a_i^th$.

We observed in Definition~\ref{df:eH} that the group $G_i/H_i$ is cyclic. Since
$\Sigma_{(i)}$ is a generating set for $G_i$ and $a_i \in \Sigma_{(i)}$, it
follows that $a_iH_i$ is a generator of $G_i/H_i$. So $G_i = \bigcup_{t\in
\mathbb Z} a_i^tH_i$, where $a_i^t \in H_i$ if and only if $n_i \mid t$.

To see that $\xi$ is well defined, let $s,t \in \mathbb Z$ and $h, k \in H_i$
with $a_i^sh=a_i^tk$.  Then $a_i^{s-t}=kh^{-1}\in H_i$, so $n_i \mid (s-t)$.
Say that $s = n_iq + t$. Then $a_i^s\phi(h) = u_i^q a_i^t \phi(h) =
a_i^t\phi(u_i^q h) = a_i^t\varphi(k)$. Thus $\xi$ is well defined.

It is easy to check that $\xi$ is an endomorphism of the group~$G_i$. Since
$\Sigma_{(i)}$ is a coset of~$H_i$ in~$G_i$, we have $\Sigma_{(i)} = a_iH_i$.
It follows that $\xi(\Sigma_{(i)}) \subseteq \Sigma_{(i)}$.
\end{proof}

\begin{claim} \label{cor:groupstuff}
For each $i \in \{1,\dotsc,\ell\}$,
\begin{enumerate}
\item the rows of\/ $\scrM_i$ form a subgroup of
    $H_i^{\Iast}$, and
\item the rows of\/ $\scrM_i$ are closed under each $\varphi \in \End{H_i}$
    such that $\varphi(u_i)=u_i$.
\end{enumerate}
\end{claim}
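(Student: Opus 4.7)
The plan is to transport the algebraic closure properties of $H_i$ to the row set of $\scrM_i$ through the preservation machinery: since $f$ is $4$-locally an evaluation, Lemma~\ref{lem:pres} ensures that $f$ preserves the graph of every compatible partial operation on $\M$ of arity at most three, and Claims~\ref{cl:extends} and~\ref{cl:endextends} supply enough such operations extending maps on the groups $G_i$ and $H_i$.

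For part~(1), I would begin by observing that, because $G_i$ is abelian, the Mal'cev operation $p_i(x,y,z) := xy^{-1}z$ is a group homomorphism $(G_i)^3 \to G_i$, and that $p_i\big((\Sigma_{(i)})^3\big) \subseteq \Sigma_{(i)}$ because the letter-affineness of $\M$ forces $\Sigma_{(i)}$ to be a coset of $H_i$ (Claim~\ref{cl:SigmaCoset}). Claim~\ref{cl:extends} then produces a compatible ternary partial operation $\psi$ on~$\M$ extending $p_i$, with $\psi(\Sigma^3)\subseteq \Sigma$ and (forced by compatibility) $\psi(\hat 0) = 0$. The key verification is that for any $y_1, y_2, y_3 \in Y_i$ the triple $(y_1,y_2,y_3)$ lies pointwise in $\dom\psi$: on $A_Q$ the values lie in $G_i$ by Claim~\ref{cl:inGi}, on $A_\Sigma$ in $\Sigma$, and on $A_0$ they are all forced to $0$ by $y_j \preccurlyeq w$. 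Preservation then yields $\psi \circ (y_1,y_2,y_3) \in D(\A)$ with $f$-image $p_i(e_i,e_i,e_i)=e_i$, and the same trichotomy gives $w \preccurlyeq \psi \circ (y_1,y_2,y_3) \preccurlyeq w$, so $\psi \circ (y_1,y_2,y_3) \in Y_i$. Restricting to $\Iast$, where the entries live in $H_i$ by Claim~\ref{cl:inHi}, the resulting row is precisely the pointwise Mal'cev of the rows of $y_1, y_2, y_3$. Thus the set of rows of $\scrM_i$ is closed under the Mal'cev operation of $H_i^{\Iast}$, and it contains the identity row $w_i\rest\Iast$ (constantly $e_i$ by definition of $\Iast$); a non-empty Mal'cev-closed subset of a group containing the identity is a subgroup.

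For part~(2), given $\varphi \in \End{H_i}$ with $\varphi(u_i) = u_i$, Claim~\ref{cl:endextends} produces a compatible unary partial operation $\psi$ on $\M$ extending $\varphi$, with $\psi(G_i)\subseteq G_i$, $\psi(\Sigma)\subseteq \Sigma$, and $\psi(0) = 0$. For $y \in Y_i$ the composite $\psi \circ y$ again respects the partition $A_Q \cup A_\Sigma \cup A_0$, so it lies in $D(\A)$ and satisfies $w \preccurlyeq \psi \circ y \preccurlyeq w$. Preservation gives $f(\psi \circ y) = \psi(e_i) = \varphi(e_i) = e_i$, placing $\psi \circ y$ in $Y_i$. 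Since $y(\Iast) \subseteq H_i$ by Claim~\ref{cl:inHi}, the restriction $(\psi \circ y)\rest\Iast$ coincides with $\varphi \circ (y\rest\Iast)$, exhibiting this as a row of $\scrM_i$.

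I do not anticipate a serious obstacle: the claim is essentially a transport of compatibility through the preservation machinery. The one point that requires care in both parts is verifying that the composite homomorphism still lies in $Y_i$, which boils down to checking that the extended partial operations respect the trichotomy $G_i \,/\, \Sigma \,/\, \{0\}$ — and this is exactly what Claims~\ref{cl:extends} and~\ref{cl:endextends} are designed to guarantee.
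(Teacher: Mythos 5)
Your proposal is correct and follows essentially the same route as the paper: both invoke Claim~\ref{cl:extends} to lift the Mal'cev operation to a compatible ternary partial operation, use $4$-local evaluation to transport it to $D(\A)$ and verify the image lands in $Y_i$, then restrict to $\Iast$. The only cosmetic difference is that the paper applies $\overline p$ to the specific triple $(y, w_i, z)$ so that (since $w_i$ is constantly $e_i$ on $\Iast$) the result is the pointwise \emph{product} of the two rows, whereas you apply it to an arbitrary triple and then appeal to the fact that a nonempty Mal'cev-closed subset containing the identity is a subgroup; both closures are established by the same domain-and-trichotomy check.
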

\begin{proof}
(1): Let $y,z \in Y_i$. We want to check that $(y \rest \Iast)(z \rest \Iast)
\in Y_i \rest \Iast$, where the multiplication is computed in the group
$H_i^{\Iast}$.

As $G_i$ is abelian, the Mal'cev operation $p \colon (G_i)^3 \to G_i$ is a
group homomorphism. As $\M$ is letter-affine, we have
$p\big((\Sigma_{(i)})^3\big) \subseteq \Sigma_{(i)}$. Thus $p$ extends to a
compatible partial operation $\overline p \colon (G_i)^3 \cup \Sigma^3 \cup
\{\hat 0\} \to M$ on~$\M$ with $\overline p(\Sigma^3) \subseteq \Sigma$, by
Claim~\ref{cl:extends}.

Since $y,z \in Y_i$, it follows using Claim~\ref{cl:inGi} that $(y, w_i, z) \in
\dom{\overline p}$ in $D(\A)$. Since $f$ is $4$-locally an evaluation, it is
easy to check that  $x := \overline p(y, w_i, z) \in Y_i$. Finally, since
$w_i(\Iast) \subseteq \{e_i\}$, we get $x(\newi) = y(\newi)z(\newi)$, for all
$\newi \in \Iast$.

(2): This part follows similarly using Claim~\ref{cl:endextends}.
\end{proof}

We need the following result about finite abelian groups, whose
proof is in the appendix.

\begin{proposition} \label{HuC}
Let $H$ be a finite abelian group with exponent dividing $m$ and let $u \in H$. Then there
is a homomorphism $\chi \colon H \to \Z_m$ such that, for all $h \in H\setminus
\{e\}$, there exists $\varphi\in \End H$ with $\varphi(u)=u$ and\/
$\chi(\varphi(h)) \ne 0$.
\end{proposition}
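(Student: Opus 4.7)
The plan is to reduce to the prime-power case via the primary decomposition of $H$, and then to construct $\chi$ explicitly using a direct-summand decomposition of $H$ adapted to $u$.

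First, I decompose $H = \bigoplus_p H_p$ and $\Z_m = \bigoplus_p \Z_{m_p}$, where $m_p$ is the $p$-part of $m$. Since $\End{H} = \prod_p \End{H_p}$, an endomorphism fixes $u$ iff each component $\varphi_p$ fixes $u_p$, and any homomorphism $\chi\colon H \to \Z_m$ splits as $\chi = \sum_p \chi_p$. Given $h \ne e$, some $h_p$ is nontrivial, and combining a $p$-local $\varphi_p$ that fixes $u_p$ and satisfies $\chi_p(\varphi_p(h_p)) \ne 0$ with the identity endomorphism on each $H_q$ ($q \ne p$) produces a global $\varphi$ witnessing the conclusion. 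So I may assume $H$ is a finite abelian $p$-group of exponent dividing $p^e := m_p$.

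For such $H$ with $u \ne e$, I use the classical fact that any height-$0$ element $v \in H \setminus H^p$ generates a cyclic direct summand. Writing $u = v^{p^c}$ with $c$ the $p$-height of $u$ and $v \notin H^p$, I obtain $H = C \times H'$ with $C = \langle v \rangle \cong \Z_{p^f}$ and $u \in C$. I then define
\[
\chi(xy) := \chi_1(x) + \chi_2(y) \qquad (x \in C,\ y \in H'),
\]
where $\chi_1\colon C \to \Z_{p^e}$ is the injection sending $v \mapsto p^{e-f}$ (well defined since $f \le e$), and $\chi_2\colon H' \to \Z_{p^e}$ is produced by applying the $u = e$ sub-construction below to $H'$. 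To verify the property for $h = xy \ne e$: if $x \ne e$, I take $\varphi$ equal to the identity on $C$ and zero on $H'$, which fixes $u \in C$ and gives $\chi(\varphi(h)) = \chi_1(x) \ne 0$ by injectivity of $\chi_1$; if $x = e$ (so $y \ne e$), I use the $\chi_2$-property to find $D \in \End{H'}$ with $\chi_2(D(y)) \ne 0$ and take $\varphi$ equal to the identity on $C$ and to $D$ on $H'$, giving $\chi(\varphi(h)) = \chi_2(D(y)) \ne 0$.

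The $u = e$ sub-construction (which also handles the original $u = e$ case) takes $\chi$ to be projection onto a maximum-order cyclic direct summand $C_1 \cong \Z_{p^{e'}}$ of the ambient group, composed with the injection $\Z_{p^{e'}} \hookrightarrow \Z_{p^e}$ given by multiplication by $p^{e-e'}$. For each $h \ne e$ of $p$-height $k$, I factor $h = (h')^{p^k}$ with $h' \notin H^p$ and use the splitting $H = \langle h' \rangle \times K$ to define $\varphi$ sending $h'$ to $p^{e'-1-k}$ times a generator of $C_1$ and killing $K$; this is compatible with the order of $h'$ precisely because $h \ne e$, and it yields $\varphi(h)$ equal to $p^{e'-1}$ times the generator of $C_1$, so $\chi(\varphi(h)) = p^{e-1} \ne 0$ in $\Z_{p^e}$. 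The main technical ingredient is the classical ``height-$0$ implies direct summand'' lemma, which I use both for $\langle v \rangle$ in the main construction and for $\langle h' \rangle$ in the sub-construction; once these splittings are in hand, the block-diagonal structure of the endomorphisms I employ makes the remaining verifications essentially mechanical.
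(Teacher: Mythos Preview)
Your reduction to the $p$-group case via primary decomposition is correct and matches the paper. The gap is in the $p$-group step: the ``classical fact'' you invoke---that a height-$0$ element of a finite abelian $p$-group generates a cyclic direct summand---is false. A cyclic subgroup $\langle v\rangle$ is a direct summand if and only if it is \emph{pure}, and height~$0$ for $v$ says nothing about the heights of $v^p,v^{p^2},\dots$. Concretely, take $H=\Z_{p^3}\times\Z_p$ and $u=(p,1)$. Then $u\notin pH=p\Z_{p^3}\times\{0\}$, so $u$ has height~$0$ (hence $c=0$ and $v=u$ in your notation), yet $u^p=(p^2,0)$ has height~$2$ in $H$ but only height~$1$ in $\langle u\rangle$; thus $\langle u\rangle$ is not pure and not a summand. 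Worse, $u$ has order $p^2$, while the only cyclic direct summands of $H$ have order $p$ or $p^3$, so $u$ lies in \emph{no} cyclic direct summand of $H$ and the decomposition $H=C\times H'$ with $u\in C$ simply does not exist. The same false splitting claim reappears in your $u=e$ sub-construction when you write $H=\langle h'\rangle\times K$.

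The paper's proof avoids this by never trying to isolate $u$ in a single cyclic factor. It keeps a full decomposition $H=\Z_{p^{n_1}}\times\cdots\times\Z_{p^{n_k}}$ with $n_1\le\cdots\le n_k$, applies an automorphism so that the $k$th coordinate of $u$ has maximal order among all coordinates, and takes $\chi$ to be the $k$th projection. The required endomorphism $\varphi$ for a given $h$ with $h_k=0$ is then built from an explicit two-factor calculation on $\Z_{p^{n_i}}\times\Z_{p^{n_k}}$ (where $h_i\ne 0$), of the shape $\varphi(x,y)=(x,\mu(x)-cy)$ with $\mu$ the natural injection and $c$ chosen so that $\varphi$ fixes $(u_i,u_k)$. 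Your approach could be repaired along similar lines, but not by insisting on $u\in C$.
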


For each $i \in \{1,\dotsc,\ell\}$, we can use this proposition to choose a
homomorphism $\chi_i \colon H_i \to \Z_m$ such that, for all $h \in
H_i\setminus \{e_i\}$, there is $\varphi\in \End {H_i}$ with $\varphi(u_i)=u_i$
and $\chi_i(\varphi(h)) \ne 0$.

\begin{definition}
Define $\barY := Y_1 \times \dots \times Y_\ell$.  Let $\scrbarM$ denote the
$\barY\times \Iast$ matrix over $\mathbb Z_m$ whose entry at position
$((y_1,\dots,y_\ell),\newi)$ is $\sum_{i=1}^\ell \chi_i(y_i(\newi))$.
\end{definition}

\begin{claim} \label{big-claim}\
\begin{enumerate}
 \item The columns of\/ $\scrbarM$ form a coset of a
     subgroup of\/ $(\mathbb Z_m)^{\barY}$.
 \item The rows of\/ $\scrbarM$ form a subgroup of\/
     $(\mathbb Z_m)^{\Iast}$.
 \item Every row of\/ $\scrbarM$ contains at least one~$0$.
 \item If\/  $\scrbarM$ has a column that is constantly~$0$, then $f$ is an
     evaluation.
\end{enumerate}
\end{claim}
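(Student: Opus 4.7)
The plan is to handle the four parts in order, with (4) carrying almost all of the content; parts (1)--(3) will reduce to routine bookkeeping, exploiting the fact that the entries of $\scrbarM$ are obtained from those of $\scrM$ by applying $\chi_1,\dots,\chi_\ell$ coordinate-wise and summing.

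For (1) I will introduce the group homomorphism
\[
\Phi \colon H_1^{Y_1}\times\dots\times H_\ell^{Y_\ell} \to (\Zn m)^{\barY},
\qquad \Phi(h_1,\dots,h_\ell)(y_1,\dots,y_\ell) := \sum_{i=1}^\ell \chi_i(h_i(y_i)),
\]
observe that the $\newi$-column of $\scrbarM$ is just $\Phi$ applied to the $\newi$-column of $\scrM$, and appeal to Claim~\ref{cl:cols-coset} together with the fact that homomorphic images of cosets are cosets. For (2), Claim~\ref{cor:groupstuff}(1) makes each $Y_i\rest\Iast$ a subgroup of $H_i^{\Iast}$, so its coordinate-wise image $\chi_i(Y_i\rest\Iast)$ is a subgroup of $(\Zn m)^{\Iast}$; the row of $\scrbarM$ at $(y_1,\dots,y_\ell)$ equals $\sum_{i=1}^\ell \chi_i\circ(y_i\rest\Iast)$, so the set of rows is the internal sum in $(\Zn m)^{\Iast}$ of $\ell$ such subgroups and hence a subgroup. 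For (3) I will fix $\bar y = (y_1,\dots,y_\ell)\in\barY$ and apply Claim~\ref{cl:smallX} to $X := \{y_1,\dots,y_\ell\}$, whose size is at most $\ell\le n$, obtaining $\newi\in\Iast$ with $y_i(\newi)=f(y_i)=e_i$ for every $i$; every summand $\chi_i(y_i(\newi))$ is then $0$.

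The substantive part is (4). Assume the column of $\scrbarM$ at some $\newi\in\Iast$ is identically zero. I aim to show that $y_i(\newi)=e_i$ for every $i\in\{1,\dots,\ell\}$ and every $y_i\in Y_i$: once this is established, $f\rest Y$ will agree with evaluation at $\newi$, and Claim~\ref{cl:eval} will finish. To this end I will fix $i$ and $y_i\in Y_i$ and substitute the tuple $\bar y := (w_1,\dots,w_{i-1},y_i,w_{i+1},\dots,w_\ell)\in\barY$. Since $w_j(\newi)=e_j$ for $j\ne i$ by the very definition of $\Iast$, the vanishing of the $(\bar y,\newi)$-entry collapses to $\chi_i(y_i(\newi))=0$. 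Set $h:=y_i(\newi)$, which lies in $H_i$ by Claim~\ref{cl:inHi}. If $h\ne e_i$ then, by the choice of $\chi_i$ via Proposition~\ref{HuC}, there is $\varphi\in\End{H_i}$ with $\varphi(u_i)=u_i$ and $\chi_i(\varphi(h))\ne 0$; Claim~\ref{cor:groupstuff}(2) then supplies $y_i'\in Y_i$ with $y_i'\rest\Iast=\varphi\circ(y_i\rest\Iast)$, so swapping $y_i$ for $y_i'$ in $\bar y$ produces a nonzero $\newi$-entry, contradicting the hypothesis.

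The main obstacle I anticipate is precisely this last step of (4): lining up the endomorphism-closure of the rows of $\scrM_i$ provided by Claim~\ref{cor:groupstuff}(2) with the separating endomorphisms supplied by Proposition~\ref{HuC}, so that a single zero column is actually strong enough to force every individual value $y_i(\newi)$ to be trivial. The other ingredients (Claims~\ref{cl:cols-coset}, \ref{cor:groupstuff}(1), \ref{cl:smallX}, \ref{cl:inHi}, \ref{cl:eval}) are already available, so once this contradiction argument is in place the rest of (4) is routine.
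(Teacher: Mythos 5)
Your argument is correct and follows essentially the same path as the paper. Part (1) packages the paper's Mal'cev-closure check as the image of a coset under the group homomorphism $\Phi$ (a cosmetic simplification), and parts (2)--(4) match the paper's proof step by step, with (4) relying on the same interplay of Claim~\ref{cor:groupstuff}(2), Proposition~\ref{HuC} and the substituted tuple $(w_1,\dots,w_{i-1},y_i',w_{i+1},\dots,w_\ell)$.
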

\begin{proof}
(1): Choose $\newi,\newip,\newiq \in \Iast$. Let $\Cbar_\newi,
\Cbar_\newip,\Cbar_\newiq$ be the associated columns of~$\scrbarM$, and let
$\col\newi,\col\newip,\col\newiq$ be the associated columns of~$\scrM$. By
Claim~\ref{cl:cols-coset} there exists $\newir\in\Iast$ such that $\col\newi
\col\newip^{-1} \col\newiq = \col\newir$; using the fact that each $\chi_i$ is
a group homomorphism, it is easy to show that $\Cbar_\newi -\Cbar_\newip
+\Cbar_\newiq = \Cbar_\newir$, which suffices.

(2): This part follows from Claim~\ref{cor:groupstuff}(1).

(3): Let $(y_1,\dotsc,y_\ell) \in \barY$. By Claim~\ref{cl:smallX}, there is
$\newi \in \Iast$ such that $y_i(\newi) = e_i$, for all $i \in
\{1,\dotsc,\ell\}$. So the row at $(y_1,\dotsc,y_\ell)$ has a $0$ in the
$\newi$ position.

(4): Assume that the $\newi$-column of $\scrbarM$ is constantly~$0$, for some
$\newi \in \Iast$.  Now let $j \in \{1,\dots,\ell\}$ and $y \in Y_j$. We shall
check that $y(\newi) = e_j$. It will then follow by Claim~\ref{cl:eval} that
$f$ is an evaluation.

Let $\varphi\in \End {H_j}$ with $\varphi(u_j)=u_j$. It suffices to show that
$\chi_j(\varphi(y(\newi))) = 0$. By Claim~\ref{cor:groupstuff}(2), there is
some $z \in Y_j$ such that $z \rest \Iast = \varphi(y \rest \Iast)$. Now
consider $(w_1,\dots,w_{j-1},z,w_{j+1},\dots,w_\ell) \in \barY$. As the
$\newi$-column of $\scrbarM$ is constantly~$0$, we get
\[
0 = \chi_j(z(\newi)) + \sum_{i\ne j}\chi_i(w_i(\newi))
= \chi_j(z(\newi)) + \sum_{i\ne j}\chi_i(e_i) = \chi_j(z(\newi))
\]
and so $\chi_j(\varphi(y(\newi))) = \chi_j(z(\newi)) = 0$, as required.
\end{proof}

The proof of the next result is in the appendix.

\begin{proposition} \label{no-matrix}
Assume $M$ is a $j \times k$ matrix over $\mathbb Z_m$ whose rows form a
subgroup of $(\mathbb Z_m)^k$, whose columns form a coset of a subgroup of
$(\mathbb Z_m)^j$, and which is such that every row contains at least one~$0$.
Then some column is constantly~$0$.
\end{proposition}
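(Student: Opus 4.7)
The plan is to reformulate the problem in terms of Pontryagin duality for finite abelian groups and then apply the annihilator correspondence. First I would perform a routine reduction: by selecting a system of distinct row representatives I may assume the rows are indexed by the row-subgroup $R \le \Z_m^k$ itself, with each element of $R$ appearing exactly once. (The column-coset hypothesis survives this reduction because the restriction map $\Z_m^I \to \Z_m^R$ is a group homomorphism and sends cosets to cosets, and the zero-in-every-row hypothesis is unaffected.) With this indexing, the $l$-th column is literally the projection $\pi_l \colon R \to \Z_m$, $\pi_l(r) = r(l)$. Since $R$ has exponent dividing $m$, each $\pi_l$ is a group homomorphism, so every column lies in the dual group $R^* := \operatorname{Hom}(R, \Z_m)$.

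Next, I would fix any index $l_0$ and set $P := \{\pi_l - \pi_{l_0} : l\}$. The column-coset hypothesis asserts precisely that $P$ is a subgroup of $\Z_m^R$; and since differences of homomorphisms are homomorphisms, in fact $P \le R^*$. Unwinding definitions,
\[
P^\perp = \{\, r \in R : \pi_l(r) = \pi_{l_0}(r) \text{ for all } l \,\} = \{\, r \in R : r \text{ is a constant row}\,\}.
\]
This is where the zero-in-every-row hypothesis enters decisively: any constant row that contains a $0$ entry must be the zero row, so $P^\perp = \{0\}$.

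To finish, I would invoke the standard duality for finite abelian groups of exponent dividing $m$, namely that the evaluation pairing $R^* \times R \to \Z_m$ is nondegenerate and satisfies $|P| \cdot |P^\perp| = |R^*|$ for every subgroup $P \le R^*$. This forces $|P| = |R^*|$, hence $P = R^*$, and therefore
\[
\{\pi_l : l \in J\} = \pi_{l_0} + R^* = R^*,
\]
which contains the zero homomorphism. The column at that index is then identically zero.

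The main obstacle is conceptual rather than technical: one has to recognise that the column condition is really a statement about a coset of characters of $R$, and that Pontryagin duality is the natural tool. Once that is set up, the two hypotheses fit together very cleanly, the column condition furnishing a subgroup of $R^*$ and the row condition collapsing its annihilator to $\{0\}$. The duality facts used for finite abelian groups of exponent dividing $m$ (specifically $R^* \cong R$ and the annihilator identity) are entirely standard.
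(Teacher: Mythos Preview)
Your proof is correct and takes a genuinely different route from the paper's. The paper first establishes a lemma (via the self-duality of $\mathbb Z_m$) that every subgroup of $(\mathbb Z_m)^k$ is the solution set of a system of homogeneous $\mathbb Z$-linear equations. It then lets $E \subseteq \mathbb Z^k$ be the set of coefficient vectors of equations satisfied by the row group, sets $d := \gcd\{\sum_i c_i : \mathbf c \in E\}$, and argues by cases: if $d>1$, then $(m/p,\dots,m/p)$ (for a prime $p\mid d$) satisfies every equation and hence is a row with no zero entry, a contradiction; if $d=1$, then some $\mathbf c \in E$ has $\sum_i c_i = 1$, and the corresponding affine combination $\sum_i c_i C_i$ of columns is both a column (by the coset hypothesis) and identically zero (since $\mathbf c \in E$).

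Your argument is more conceptual: you recognise the columns, after a harmless reduction to distinct rows, as elements of the character group $R^* = \operatorname{Hom}(R,\mathbb Z_m)$, observe that the zero-in-every-row hypothesis forces the annihilator of the column-difference group $P$ to be trivial, and then invoke the standard annihilator identity $|P|\cdot|P^\perp| = |R^*|$ to conclude $P = R^*$, so that the column set is all of $R^*$ and in particular contains~$0$. Both proofs are ultimately duality arguments, but the paper's is more computational and constructive (it explicitly produces the zero column as an integer combination of the given columns), while yours is shorter and makes the role of Pontryagin duality transparent. A minor cosmetic point: in your last displayed equation the index set should be $\{1,\dots,k\}$ rather than~$J$.
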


Using this proposition and Claim~\ref{big-claim}, it follows that $f$ is an
evaluation. Hence we have proved that $\M$ is dualizable if it is
letter-affine.


\section{Two classification results}\label{sec:class}

In this section, we characterize dualizability within two special classes of
finite automatic algebras: $\abs \Sigma = 1$ and $\abs Q = 2$.

Recall that the term `whiskery cycles' was introduced in
Definition~\ref{def:whiskcyc}.

\begin{lemma}\label{lem:onewhiskcyc}
Let\/ $\M$ be a finite automatic algebra with $\Sigma = \{a\}$.
If the letter $a$ acts as whiskery cycles, then\/ $\M$ is
dualizable.
\end{lemma}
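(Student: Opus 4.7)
The plan is to apply the reductions in Remark~\ref{rem:easy}, together with independence of the generator (Theorem~\ref{thm:indep}), to strip $\M$ down to a finite automatic algebra on which the single letter $a$ acts as a permutation. Dualizability then follows from Corollary~\ref{thm:perm}.

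First I would eliminate the kill states. Suppose $q \in \ks a$, so $qa = 0$ while $q \ne 0$. If some $r \in Q$ satisfied $ra = q$, then choosing $n \in \N$ with $ra = ra^{n+1}$ (which exists since $a$ acts as whiskery cycles) would give
\[
q \;=\; ra \;=\; ra^{n+1} \;=\; (ra)\cdot a^n \;=\; q \cdot a^n \;=\; 0,
\]
a contradiction. Hence $q \notin \ran a$; combined with $q \notin \dom a$ and $\Sigma = \{a\} \ne \emptyset$, Remark~\ref{rem:easy}(3) lets me pass to the subalgebra on $M \comp \{q\}$ without changing $\ISP\M$. Iterating, I may assume $\dom a = Q$.

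With no kill states remaining, every $q \in Q$ satisfies $qa = qa^{n+1}$ with $qa \ne 0$, so $qa$ lies on some $a$-cycle, and $\ran a$ coincides with the union of those cycles. Call $q \in Q$ a \emph{whisker} if $q$ is not itself on an $a$-cycle. For any whisker $q$ with $qa = r'$, the cycle through $r'$ has a predecessor $r$ with $ra = r'$; then $r \ne q$ (since $r$ lies on a cycle while $q$ does not), $qa = ra$, and $q \notin \ran a$. So Remark~\ref{rem:easy}(4) lets me discard $q$ without changing $\ISP\M$. Since removing $q$ alters none of the surviving $a$-actions, the whiskery-cycles property is preserved, and I can iterate until no whiskers remain.

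At this point $a$ acts as a permutation on the surviving state set, so Corollary~\ref{thm:perm} yields dualizability of the reduced algebra; by Theorem~\ref{thm:indep}, $\M$ itself is dualizable. I do not anticipate any substantive obstacle: the content of the argument is just the verification of the redundancy and isolation conditions, each of which falls out directly from the whiskery-cycles hypothesis.
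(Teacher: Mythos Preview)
Your proof is correct and follows essentially the same route as the paper: reduce via Remark~\ref{rem:easy} (isolated states for kill states, redundant states for whiskers) until $a$ acts as a permutation, then invoke Corollary~\ref{thm:perm}. The paper's version is terser, merely asserting that type-(3) states are isolated and type-(2) states are redundant, while you supply the verifications explicitly.
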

\begin{proof}
Assume $a$ acts as whiskery cycles. Each state of $\M$ is (1)~in an $a$-cycle,
(2)~only one step away from an $a$-cycle, or (3)~not in the domain of~$a$.
Using Remark~\ref{rem:easy}, we can assume that $\M$ has no redundant or
isolated states. But the states satisfying (2) are redundant, and the states
satisfying (3) are isolated. Thus we can assume that $a$ acts as a permutation
of~$Q$, and so $\M$ is dualizable by Corollary~\ref{thm:perm}.
\end{proof}

\begin{theorem}[Classification for $\abs\Sigma = 1$]\label{thm:class1}
Let\/ $\M = \langle Q \cup \Sigma \cup \{0\}; \cdot \rangle$ be a finite
automatic algebra with $\abs \Sigma = 1$. Then $\M$ is dualizable if and only
if the letter acts as whiskery cycles \textup(i.e., $\M$ satisfies $vxx \approx
wxx \implies vx \approx wx$\textup).
\end{theorem}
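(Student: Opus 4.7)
The plan is to observe that both directions of this classification are essentially immediate consequences of results already established earlier in the paper, so the proof reduces to a clean combination of Theorem~\ref{thm:wc} and Lemma~\ref{lem:onewhiskcyc}.

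For the forward direction (dualizable $\Rightarrow$ whiskery cycles), I would argue by contrapositive. Suppose the unique letter $a \in \Sigma$ does not act as whiskery cycles. Then by Theorem~\ref{thm:wc}, $\M$ is inherently non-dualizable, hence in particular non-dualizable. The equivalence with the quasi-equation $vxx \approx wxx \implies vx \approx wx$ is Lemma~\ref{lem:wc}(1)$\Leftrightarrow$(2), which is what lets us restate the condition as a quasi-equation in the statement of the theorem.

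For the reverse direction (whiskery cycles $\Rightarrow$ dualizable), this is exactly the content of Lemma~\ref{lem:onewhiskcyc} just proved above, which reduces (via the trivial-case cleanup of Remark~\ref{rem:easy} together with independence of the generator) to the case that $a$ acts as a permutation, and then invokes Corollary~\ref{thm:perm} (the $|\Sigma|=1$ case of Theorem~\ref{thm:affine}).

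There is no real obstacle here; the theorem is simply the packaging of the two preceding pieces into a single ``if and only if'' statement. The only thing to mention is that the parenthetical quasi-equation in the theorem statement should be justified via Lemma~\ref{lem:wc}, which provides the syntactic reformulation of the whiskery-cycles condition. So the complete proof is a one- or two-sentence assembly: combine Theorem~\ref{thm:wc}, Lemma~\ref{lem:wc}, and Lemma~\ref{lem:onewhiskcyc}.
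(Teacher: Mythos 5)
Your proposal is correct and matches the paper's own proof exactly: the paper also deduces the theorem directly from Theorem~\ref{thm:wc} (for the ``only if'' direction, via the contrapositive) and Lemma~\ref{lem:onewhiskcyc} (for the ``if'' direction), with Lemma~\ref{lem:wc} supplying the quasi-equational restatement. There is nothing to add.
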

\begin{proof}
This follows directly from Theorem~\ref{thm:wc} and
Lemma~\ref{lem:onewhiskcyc}.
\end{proof}

We next complete the classification for $2$-state automatic algebras. The
following two algebras are not covered by any of the results we have proved so
far.

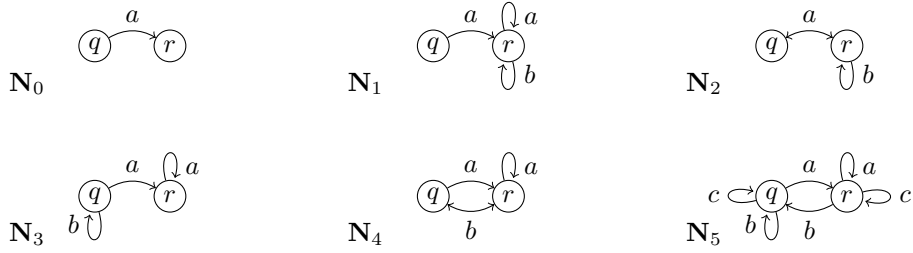
\begin{figure}
\begin{center}
\begin{tikzpicture}
   \begin{scope}
      \node at (-0.9,-0.5) {$\mathbf N_0$};
      \node[state] (1) at (0,0) {$q$};
      \node[state] (2) at (1,0) {$r$};
      \path (1) edge [->, bend left] node {$a$} (2);
   \end{scope}
   \begin{scope}[xshift=4.5cm]
      \node at (-0.9,-0.5) {$\mathbf N_1$};
      \node[state] (1) at (0,0) {$q$};
      \node[state] (2) at (1,0) {$r$};
      \path (1) edge [->, bend left] node {$a$} (2);
      \path (2) edge [loop above, very near end, right] node {$a$} (2);
      \path (2) edge [loop below, very near start, right] node {$b$} (2);
   \end{scope}
   \begin{scope}[xshift=9cm]
      \node at (-0.9,-0.5) {$\mathbf N_2$};
      \node[state] (1) at (0,0) {$q$};
      \node[state] (2) at (1,0) {$r$};
      \path (1) edge [<->, bend left] node {$a$} (2);
      \path (2) edge [loop below, very near start, right] node {$b$} (2);
   \end{scope}
   \begin{scope}[yshift=-2cm]
      \node at (-0.9,-0.5) {$\mathbf N_3$};
      \node[state] (1) at (0,0) {$q$};
      \node[state] (2) at (1,0) {$r$};
      \path (1) edge [->, bend left] node {$a$} (2);
      \path (2) edge [loop above, very near end, right] node {$a$} (2);
      \path (1) edge [loop below, very near end, left] node {$b$} (1);
   \end{scope}
   \begin{scope}[yshift=-2cm,xshift=4.5cm]
      \node at (-0.9,-0.5) {$\mathbf N_4$};
      \node[state] (1) at (0,0) {$q$};
      \node[state] (2) at (1,0) {$r$};
      \path (1) edge [->, bend left] node {$a$} (2);
      \path (2) edge [loop above, very near end, right] node {$a$} (2);
      \path (1) edge [<->, bend right, swap] node {$b$} (2);
   \end{scope}
   \begin{scope}[yshift=-2cm,xshift=9cm]
      \node at (-0.9,-0.5) {$\mathbf N_5$};
      \node[state] (1) at (0,0) {$q$};
      \node[state] (2) at (1,0) {$r$};
      \path (1) edge [->, bend left] node {$a$} (2);
      \path (2) edge [loop above, very near end, right] node {$a$} (2);
      \path (2) edge [->, bend left] node {$b$} (1);
      \path (1) edge [loop below, very near end, left] node {$b$} (1);
      \path (1) edge [loop left] node {$c$} (1);
      \path (2) edge [loop right] node {$c$} (2);
   \end{scope}
\end{tikzpicture}
\end{center}
\caption{The minimal non-dualizable $2$-state automatic algebras}\label{fig:2nd}
\end{figure}

\begin{lemma}\label{lem:2state2}
The $2$-state automatic algebra $\mathbf N_4$ from Figure~\ref{fig:2nd} is
inherently non-dualizable.
\end{lemma}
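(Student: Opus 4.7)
The plan is to apply the Inherent Non-dualizability Lemma (Lemma~\ref{lem:IND}) with $I := \N$ and $\mu(n) := n$. I would take the subalgebra $\A$ of $\mathbf N_4^\N$ generated by $A_0 \cup B$, where
\[
A_0 := \{\, q \ov r i \mid i \in \N \,\} \quad\text{and}\quad B := \{\, a \ov b i \mid i \in \N \,\}.
\]
Since the $i$-th coordinate of $q \ov r j$ equals $r$ exactly when $j = i$, the unique infinite $\ker(\pi_i) \rest {A_0}$-block consists of all $q \ov r j$ with $j \ne i$, each having $i$-th coordinate $q$; hence the witness element $g$ of Lemma~\ref{lem:IND}(2) is $\const q$.

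To verify condition~(2), I would show $\const q \notin A$ by induction on term depth. The only nonzero products available in $\A$ arise from multiplying a state-valued element by a letter-valued generator $a \ov b j$, and the multiplication rules of $\mathbf N_4$ (namely $q \cdot a = q \cdot b = r$, $r \cdot a = r$, and $r \cdot b = q$) force such a product to alternate between ``almost $q$'' (i.e., finite, non-empty $r$-support) and ``almost $r$'' (i.e., cofinite $r$-support). Since $\const q$ has empty $r$-support, it belongs to neither class and hence cannot lie in $A$.

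For condition~(1), a standard pigeonhole argument against the at-most-$n$ classes of $\theta \rest{A_0}$ already forces some block to be infinite (in particular, of size greater than~$n$). To establish uniqueness, I would suppose that two distinct blocks $P$ and $Q$ both have size greater than~$n$ and choose $J \subseteq P$, $K \subseteq Q$ of size $n+1$ with $J \cap K = \varnothing$. A second pigeonhole applied to $\{\,a \ov b j \mid j \in J\,\}$ and $\{\,a \ov b k \mid k \in K\,\}$ then yields distinct $i, j \in J$ and distinct $k, \ell \in K$ with $a \ov b i \equiv_\theta a \ov b j$ and $a \ov b k \equiv_\theta a \ov b \ell$. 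Direct computation in $\mathbf N_4$ gives $(q \ov r j)(a \ov b i)(a \ov b k) = q \ov r j \ov r k$ and $(q \ov r j)(a \ov b j)(a \ov b k) = q \ov r k$, so congruence-chasing yields $q \ov r j \ov r k \equiv_\theta q \ov r k$; the symmetric computation on the $K$-side gives $q \ov r j \ov r k \equiv_\theta q \ov r j$. Combining, $q \ov r j \equiv_\theta q \ov r k$, contradicting $P \ne Q$.

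The main obstacle I expect is the careful verification of condition~(2): one must exhaustively track how products in $\A$ can be formed to confirm that the ``finite-or-cofinite $r$-support'' invariant is genuinely preserved throughout and, in particular, to rule out any accidental iterated construction of $\const q$ from the generators.
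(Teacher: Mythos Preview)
Your overall strategy matches the paper's, but the choice $B=\{\,a\ov bi\mid i\in\N\,\}$ is wrong and breaks the congruence argument. With the rules you quote ($q\cdot a=q\cdot b=r$, $r\cdot a=r$, $r\cdot b=q$), the letter $a$ sends every state to~$r$, so multiplying by an element whose default letter is $a$ collapses almost all coordinates to~$r$. Concretely, for distinct $i,j,k$ the generic coordinate of $(q\ov rj)(a\ov bi)(a\ov bk)$ is $q\cdot a\cdot a=r$, and a full check gives $(q\ov rj)(a\ov bi)(a\ov bk)=r\ov qk$, not $q\ov rj\ov rk$. Likewise $(q\ov rj)(a\ov bj)(a\ov bk)=r\ov qk$, not $q\ov rk$. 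Both products are literally the same element, so the relation $a\ov bi\equiv_\theta a\ov bj$ yields no information, and no product built from your generators ever lands back in~$A_0$. Condition~(1) therefore cannot be verified with this~$B$.

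The fix is to swap the roles of $a$ and $b$ in $B$: take $B:=\{\,b\ov ai\mid i\in\N\,\}$, so that the default letter is the transposition~$b$. Then $q\ov rk\cdot b\ov ak\cdot b\ov aj=q\ov rj$ (generic coordinate $q\cdot b\cdot b=q$), while replacing $b\ov ak$ by a $\theta$-related $b\ov a\ell$ gives $q\ov rj\ov rk$; by symmetry one obtains $q\ov rj\equiv_\theta q\ov rk$, exactly as you intended. This is precisely the computation in the paper. With the corrected $B$, condition~(2) is also simpler: the paper avoids your inductive ``support'' analysis by taking the explicit subalgebra $A:=(Q^{\N}\setminus\{\underline q\})\cup(\Sigma^{\N}\setminus\{\underline b\})\cup\{\underline 0\}$ and checking directly that it is closed under the operation and contains $A_0\cup B$.
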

\begin{proof}
We will use Lemma~\ref{lem:IND} with the map $\mu \colon \N \to \N$ given by
$\mu(n) := n$. Define $A_0, A \subseteq M^\N$ by
\[
 A_0 := \{\, q\ov ri  \mid i \in \N \,\}, \quad
 A:=\left(Q^{\N}\comp \{\underline{q}\}\right) \cup \left( \Sigma^{\N}\comp \{\underline{b}\}\right)
    \cup \{\underline{0}\}.
\]
It is straightforward to check that $A$ is the universe of a subalgebra $\A$ of
$\M^\N$. Condition~\ref{lem:IND}(2) holds, as $g = \underline{q} \not\in A$.

To see that condition~\ref{lem:IND}(1) holds, let $n \in \N$ and let $\theta$
be a congruence on $\A$ of index at most~$n$. Consider two subsets $\{\, q\ov
rj \mid j \in J \,\}$ and $\{\, q\ov rk \mid k \in K \,\}$ of $A_0$ that are
each contained in a block of~$\theta$, where $J$ and $K$ are disjoint subsets
of $\N$ with $\abs J = \abs K = n + 1$. We want to show that $\{\, q\ov ri
\mid i \in J \cup K \,\}$ is contained in a block of~$\theta$.

As the sets $\{\, b\ov aj \mid j \in J \,\}$ and $\{\, b\ov ak \mid k \in K
\,\}$ each have $n+1$ elements, there must be distinct $i,j \in J$ and distinct
$k,\ell \in K$ such that $b\ov ai \equiv_{\theta} b \ov aj$ and $b\ov ak
\equiv_{\theta} b \ov a\ell$. We have
\[
q\ov rj = q\ov rk \cdot b\ov ak \cdot b\ov aj \equiv_{\theta} q\ov rk \cdot b\ov a\ell \cdot b\ov aj = q\ov rj \ov rk.
\]
A symmetric argument shows that $q\ov rk \equiv_{\theta} q\ov rj \ov rk$ and
hence $q\ov rj \equiv_{\theta} q\ov rk$. So condition~\ref{lem:IND}(1) holds
and $\M$ is inherently non-dualizable.
\end{proof}

\begin{lemma}\label{lem:2state3}
The $2$-state automatic algebra $\mathbf N_5$ from Figure~\ref{fig:2nd} is
inherently non-dualizable.
\end{lemma}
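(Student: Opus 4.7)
The plan is to mimic the proof of Lemma~\ref{lem:2state2} by applying the Inherent Non-dualizability Lemma with $\mu(n) := n$, indexing set $\N$, and
\[
A_0 := \{q\ov ri \mid i \in \N\},\qquad A := (Q^\N \comp \{\underline q\}) \cup L \cup \{\underline 0\},
\]
where $L := \{\gamma \in \Sigma^\N : \gamma(p) = a \text{ for some } p\}$. Closure of $A$ under the coordinatewise multiplication of $\mathbf N_5^\N$ is straightforward: state--state and letter--letter products land in $\{\underline 0\}$, while for $x \in Q^\N$ and $\gamma \in L$ with $\gamma(p) = a$ one has $(x \cdot \gamma)(p) = x(p) \cdot a = r$ (using $qa = ra = r$), so $x \cdot \gamma \ne \underline q$. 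Thus $A$ is a subuniverse with $A_0 \subseteq A$, and condition~\ref{lem:IND}(2) holds with $g = \underline q$.

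For condition~\ref{lem:IND}(1), I would fix a congruence $\theta$ on $\A$ of index at most $n$ together with disjoint $J, K \subseteq \N$ of size $n+1$ such that $\{q\ov rj : j \in J\}$ and $\{q\ov rk : k \in K\}$ each lie in a single $\theta$-block, aiming to show that these blocks coincide. The main pigeon step is applied to the $(n+1)^2$-element family
\[
\{b\ov aj\ov ak \mid j \in J,\ k \in K\} \subseteq L.
\]
Some $\theta$-block must contain more than $n+1$ of these sequences, and since a subset of the bipartite edge set $J \times K$ of size larger than $\max(|J|,|K|)$ cannot be a star it must contain two edges with pairwise disjoint endpoints, yielding $j_1 \ne j_2$ in $J$ and $k_1 \ne k_2$ in $K$ with $b\ov a{j_1}\ov a{k_1} \equiv_\theta b\ov a{j_2}\ov a{k_2}$. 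A direct coordinate check gives $(q\ov rm) \cdot (b\ov aj\ov ak) = q\ov r{\{j,k\}}$ for every $m \in \N$, so multiplying both sides by any $q\ov rm$ produces $q\ov r{\{j_1,k_1\}} \equiv_\theta q\ov r{\{j_2,k_2\}}$.

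The bridging step exploits that $c$ acts as the identity on $Q$, via the sequences $b\ov am\ov c{m'} \in L$ (with $m \ne m'$), for which another coordinate computation gives
\[
(q\ov rI) \cdot (b\ov am\ov c{m'}) = \begin{cases} q\ov r{\{m,m'\}} & \text{if } m' \in I, \\ q\ov rm & \text{if } m' \notin I, \end{cases}
\]
for any finite nonempty $I \subseteq \N$. Applying this to the congruence with $m' = j_1$ (which lies in $\{j_1,k_1\}$ but not in $\{j_2,k_2\}$) produces $q\ov r{\{m,j_1\}} \equiv_\theta q\ov rm$ for every $m \ne j_1$; specializing $m = k_1$ gives $q\ov r{\{j_1,k_1\}} \equiv_\theta q\ov r{k_1}$. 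Symmetrically, taking $m' = k_1$ and $m = j_1$ yields $q\ov r{\{j_1,k_1\}} \equiv_\theta q\ov r{j_1}$. Transitivity then forces $q\ov r{j_1} \equiv_\theta q\ov r{k_1}$, so the $J$- and $K$-blocks coincide and condition~\ref{lem:IND}(1) is established.

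The main conceptual obstacle is discovering the bridging family $b\ov am\ov c{m'}$: these sequences must remain in $A$ (hence the mandatory $a$-entry at $m$) while exploiting the identity action of $c$ at $m'$ to probe whether $m'$ lies in the support $I$, which is exactly what converts a congruence between 2-support elements into a congruence between singletons in $A_0$. Once this template is found, the remaining work is routine coordinate bookkeeping, mirroring the structure of the proof of Lemma~\ref{lem:2state2}.
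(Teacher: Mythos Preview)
Your proof is correct, and in fact your subalgebra $\A$ is exactly the one the paper uses (your $L$ is precisely $\Sigma^\N \setminus \{b,c\}^\N$). The difference is that the paper takes $\mu(n) := 1$ rather than $\mu(n) := n$, which lets it bypass the pigeonhole step entirely: given any $q\ov ri \equiv_\theta q\ov rj$ and $q\ov rk \equiv_\theta q\ov r\ell$ in $A_0$, one simply multiplies the first relation by the single letter-sequence $b\ov ci\ov ak$ to obtain $q\ov ri\ov rk \equiv_\theta q\ov rk$, and symmetrically $q\ov ri \equiv_\theta q\ov ri\ov rk$, finishing in one line. Your bridging family $b\ov am\ov c{m'}$ is essentially this same sequence, but you only deploy it after a detour through the $(n+1)^2$-element family $\{b\ov aj\ov ak\}$ and a bipartite matching argument. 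So both routes work; the paper's is shorter because it exploits from the outset the observation that the identity action of $c$ lets one test membership in the support directly against elements of~$A_0$, making the stronger bound $\mu(n)=1$ available.
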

\begin{proof}
We use Lemma~\ref{lem:IND} with $\mu \colon \N \to \N$ given by $\mu(n) := 1$.
Let $A_0, A \subseteq M^\N$ be
\[
A_0 := \{\, q\ov ri \mid i \in \N \,\}, \quad A:=\left(Q^{\N}\comp \{\underline{q}\}\right) \cup \left( \Sigma^{\N}\comp \{b,c\}^{\N}\right) \cup \{\underline{0}\}.
\]
Note that $A$ is the universe of a subalgebra $\A$ of $\M^\N$, and that
condition~\ref{lem:IND}(2) holds, as $g = \underline{q} \not\in A$.

For condition~\ref{lem:IND}(1),  let $\theta$ be a congruence on $\A$. We need
to show that $\theta\rest{A_0}$ has a unique non-trivial block. So assume that
$q \ov ri \equiv_{\theta} q\ov rj$ and $q \ov rk \equiv_{\theta} q\ov r\ell$,
for distinct $i,j,k,\ell \in \N$. It suffices to show that  $q \ov ri
\equiv_{\theta} q\ov rk$, which follows as
\[
q \ov rk = q \ov rj \cdot b \ov ci \ov ak \equiv_{\theta} q \ov ri \cdot b \ov ci \ov ak = q \ov ri \ov rk
\]
and, by symmetry, $q \ov ri \equiv_{\theta} q \ov ri \ov rk$.
\end{proof}

\goodbreak

\begin{theorem}[Classification for $\abs Q = 2$]\label{thm:class2}
Let\/ $\M = \langle Q \cup \Sigma \cup \{0\}; \cdot \rangle$ be a finite
automatic algebra with $\abs Q = 2$. Then the following are equivalent:
\begin{enumerate}
 \item $\M$ is dualizable;
 \item $\M$ satisfies the equations $xy \approx xyyy$ and $wxyz \approx
     wyxz$;
 \item none of the six automatic algebras in Figure~\ref{fig:2nd} embeds
     into~$\M$.
\end{enumerate}
\end{theorem}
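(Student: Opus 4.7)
The plan is to prove the three implications $(1)\Rightarrow(3)$, $(3)\Rightarrow(2)$, and $(2)\Rightarrow(1)$ separately, drawing on the results already in hand.

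For $(1)\Rightarrow(3)$, nothing remains but an invocation: each of the six algebras in Figure~\ref{fig:2nd} has already been shown to be inherently non-dualizable ($\mathbf N_0 = \mathbf F_0$ by Theorem~\ref{thm:wc}, the algebras $\mathbf N_1,\mathbf N_2,\mathbf N_3$ in Example~\ref{ex:rankill} via Corollary~\ref{lem:rankill}, and $\mathbf N_4,\mathbf N_5$ by Lemmas~\ref{lem:2state2} and~\ref{lem:2state3}), so none of them can embed into a dualizable $\M$.

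For $(3)\Rightarrow(2)$, I would argue by contrapositive and split on which equation fails. If $xy\approx xyyy$ fails in $\M$, then some letter $a$ violates $qa = qa^3$ at some $q\in Q$; since every $a$-cycle in a two-state automaton has length $1$ or $2$, this already means $a$ fails to act as whiskery cycles, so Lemma~\ref{lem:wc} exhibits a copy of some $\mathbf F_m$ in $\M$, and $\abs Q = 2$ forces $m=0$, so $\mathbf N_0$ embeds. If instead $xy\approx xyyy$ holds but $wxyz\approx wyxz$ fails, then after normalising with Remark~\ref{rem:easy} each letter acts as whiskery cycles on $Q$, leaving only six possible letter-types (identity, swap, the two constants, and the two partial whiskery maps). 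I would fix witnesses $w,x,y,z$ with $wxyz\neq wyxz$, observe that $w\in Q$, $x,y,z\in\Sigma$, and $wxy\neq wyx$, and then go through the finite list of unordered type-pairs $\{x,y\}$ that admit $wxy\neq wyx$, using the separating letter $z$ to pinpoint which of $\mathbf N_1,\dots,\mathbf N_5$ is realised as a subalgebra of $\sg{\M}{\{w,x,y,z\}}$.

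For $(2)\Rightarrow(1)$, the two equations combined with Remark~\ref{rem:easy} severely restrict the structure of each component subalgebra. Each letter still acts as whiskery cycles on the (at most) two states of its component, and the equation $wxyz\approx wyxz$ rules out the coexistence of the non-commuting type-pairs identified above. A routine enumeration then shows that, after stripping letters whose domain misses the component via Remark~\ref{rem:easy}(1), every component subalgebra of $\M$ falls into one of two cases: either every letter acts as a constant on the component (so Theorem~\ref{lem:constants} applies), or every letter acts as a permutation of the component and these permutations commute (so Corollary~\ref{thm:dualabel} applies). Independence of the generator (Theorem~\ref{thm:indep}) and Lemma~\ref{lm:quasiv} then lift dualizability from the component subalgebras to $\M$. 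The main obstacle lies in the case analysis of $(3)\Rightarrow(2)$: the subtlety is that $wxy\neq wyx$ does not by itself produce $wxyz\neq wyxz$ unless some letter $z$ of $\M$ separates the states $wxy$ and $wyx$, and it is precisely the combination of the types of $x,y$ with this separating $z$ that dictates which forbidden $\mathbf N_i$ must appear.
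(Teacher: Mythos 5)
Your decomposition of the implications differs from the paper's: the paper proves the \emph{easy} direction $(2)\Rightarrow(3)$ (each $\mathbf N_i$ visibly fails one of the two equations, and subalgebras inherit equational failures) and then folds the verification of $(2)$ into the structure enumeration used for $(3)\Rightarrow(1)$. You instead propose to prove $(3)\Rightarrow(2)$ directly by contrapositive, which is a genuinely harder exercise (you must \emph{locate} a forbidden subalgebra from an equational failure), and then to derive $(1)$ from $(2)$ alone. That alternate route is in principle workable, and your $(1)\Rightarrow(3)$ and your sketch of $(3)\Rightarrow(2)$ look sound — the whiskery-cycles reduction to $\mathbf F_0=\mathbf N_0$ and the case split on letter-types with a separating $z$ are the right ingredients.

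However, there is a concrete gap in your $(2)\Rightarrow(1)$ argument. You assert that, after the normalisations of Remark~\ref{rem:easy}, every component subalgebra of $\M$ is either all-constant (Theorem~\ref{lem:constants}) or a set of commuting permutations (Corollary~\ref{thm:dualabel}). This dichotomy is false. Consider the two-state algebra with $\Sigma=\{a,b\}$ where $a$ is the constant to $q$ (so $qa=ra=q$) and $b$ acts as the identity on $Q$. This algebra satisfies both equations of $(2)$: $xy\approx xyyy$ is immediate since $a$ and $b$ are idempotent on $Q$, and $wxyz\approx wyxz$ holds because $wab=wba=q$ for all $w\in Q$. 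It is connected (the edge $r\overset{a}{\to}q$ joins the two states), so Lemma~\ref{lm:quasiv} gives no further decomposition, and none of the normalisations in Remark~\ref{rem:easy} removes $a$, $b$, $q$ or $r$. Yet $a$ is not a permutation of $Q$ and $b$ is not constant, so neither Theorem~\ref{lem:constants} nor Corollary~\ref{thm:dualabel} applies to this component. The paper handles precisely this algebra by an ad hoc embedding into $\mathbf L^2$, where $\mathbf L$ is the three-element subalgebra on $\{q,a,0\}$, after which Theorems~\ref{lem:constants} and~\ref{thm:indep} apply. You need to add this case (or some equivalent trick) to your enumeration; as written, your ``routine enumeration'' silently skips a non-degenerate algebra that satisfies $(2)$.
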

\begin{proof}
(1)\,$\Rightarrow$\,(3): The six algebras in Figure~\ref{fig:2nd} are
inherently non-dualizable by Theorem~\ref{thm:wc}, Example~\ref{ex:rankill},
and Lemmas~\ref{lem:2state2} and~\ref{lem:2state3}.

(2)\,$\Rightarrow$\,(3): The algebra $\mathbf N_0$ fails the first equation, as
$qa = r \ne 0 = qaaa$. The other algebras fail the second equation, as shown
below.
\begin{align*}
 \mathbf N_1, \mathbf N_2 &\colon \ qabb = r \ne 0 = qbab; & \mathbf N_3 &\colon \ qbaa = r \ne 0 = qaba;\\
 \mathbf N_4 &\colon \ qbab = q \ne r = qabb; & \mathbf N_5 &\colon \ qabc = q \ne r = qbac.
\end{align*}

(3)\,$\Rightarrow$\,(1)\,\&\,(2): Assume $\mathbf N_i \not\hookrightarrow \M$,
for all $i \in \{0, 1, \dotsc, 5\}$. We want to show that $\M$ is dualizable
and satisfies $xy \approx xyyy$ and $wxyz \approx wyxz$. By
Remark~\ref{rem:easy}, the automatic algebra $\M$ generates the same
quasi-variety (and therefore variety) as one with no `repeated letters' and no
`totally undefined letters'. So we can assume $\M$ has no such letters, by
Theorem~\ref{thm:indep}.

Since $\mathbf N_0 \not\hookrightarrow \M$, each letter in $\Sigma$ acts as
\begin{itemize}
 \item the transposition,
 \item a constant, or
 \item a restriction of the identity.
\end{itemize}
This implies that $\M$ satisfies the first equation $xy \approx xyyy$.

First assume that each edge in the partial automaton of~$\M$ is a loop. So $\M$ is
dualizable, by Corollary~\ref{cor:loops}. Let $a,b,c \in \Sigma$ and define
$D:= \dom a \cap \dom b \cap \dom c \subseteq Q$. Let $q \in Q$. Since every
edge is a loop, if $q \in D$, then $qabc = q = qbac$, and if $q \in Q\comp D$,
then $qabc = 0 = qbac$. So $\M$ satisfies the second equation.

Now assume that there is an edge that is not a loop. Then $\Sigma$ must contain
a letter that acts as the transposition or as a constant.

First consider the case where a letter in $\Sigma$ acts as the transposition
on~$Q$. Then there can be no constants (as $\mathbf N_4 \not\hookrightarrow
\M$) and there can be no proper restricted identity (as $\mathbf N_2
\not\hookrightarrow \M$). So $\M$ is isomorphic to one of the following two
algebras.
\begin{center}
\begin{tikzpicture}
   \begin{scope}
      \node[state] (1) at (0,0) {$q$};
      \node[state] (2) at (1,0) {$r$};
      \path (2) edge [<->, bend right, above] node {$a$} (1);
   \end{scope}
   \begin{scope}[xshift=4.5cm]
      \node[state] (1) at (0,0) {$q$};
      \node[state] (2) at (1,0) {$r$};
      \path (2) edge [<->, bend right, above] node {$a$} (1);
      \path (1) edge [loop below, very near start, right] node {$b$} (1);
      \path (2) edge [loop below, very near start, right] node {$b$} (2);
   \end{scope}
\end{tikzpicture}
\end{center}
In both cases, the algebra $\M$ is dualizable (by Corollaries~\ref{thm:perm}
and~\ref{thm:dualabel}) and satisfies $wxyz \approx wyxz$.

Now we are down to the case where $\Sigma$ contains a constant letter. There
can be no proper restricted identity (as $\mathbf N_1, \mathbf
N_3\not\hookrightarrow \M$) and no transposition (as $\mathbf N_4
\not\hookrightarrow \M$). As $\mathbf N_5 \not\hookrightarrow \M$, it follows
that $\M$ is isomorphic to one of the following.
\begin{center}
\begin{tikzpicture}
  \begin{scope}
      \node[state] (1) at (0,0) {$q$};
      \node[state] (2) at (1,0) {$r$};
      \path (2) edge [->, bend right, above] node {$a$} (1);
      \path (1) edge [loop above, very near start, left] node {$a$} (1);
   \end{scope}
   \begin{scope}[xshift=3cm]
      \node[state] (1) at (0,0) {$q$};
      \node[state] (2) at (1,0) {$r$};
      \path (2) edge [->, bend right, above] node {$a$} (1);
      \path (1) edge [loop above, very near start, left] node {$a$} (1);
      \path (1) edge [->, bend right, below] node {$b$} (2);
      \path (2) edge [loop below, very near start, right] node {$b$} (2);
   \end{scope}
   \begin{scope}[xshift=6cm]
      \node[state] (1) at (0,0) {$q$};
      \node[state] (2) at (1,0) {$r$};
      \path (2) edge [->, bend right, above] node {$a$} (1);
      \path (1) edge [loop above, very near start, left] node {$a$} (1);
      \path (1) edge [loop below, very near start, right] node {$b$} (1);
      \path (2) edge [loop below, very near start, right] node {$b$} (2);
   \end{scope}
\end{tikzpicture}
\end{center}
These three algebras all satisfy $wxyz \approx wyxz$. The first two are
dualizable by Theorem~\ref{lem:constants}. So it remains to check that $\M$ is
dualizable if it is the third. In this case, define $\mathbf L$ to be the
subalgebra of $\M$ on $L := \{q, a, 0\}$. There is an embedding $\phi \colon \M
\to \mathbf L^2$ given by $x \mapsto (x,0)$, for all $x \in L$, $r \mapsto
(q,q)$ and $b \mapsto (a,a)$. Since $\mathbf L$ is dualizable by
Theorem~\ref{lem:constants}, it follows by Theorem~\ref{thm:indep} that $\M$ is
too.
\end{proof}

\section{Alternating chain}\label{sec:morenondual}

To complement Theorem~\ref{thm:affine}, we show that an
automatic algebra $\M$ can be non-dualizable if $\Sigma$ acts as
a set of commuting permutations of~$Q$. We can then give
an infinite ascending chain of automatic algebras that are
alternately dualizable and non-dualizable.

Since we are finding non-dualizable automatic algebras that are not inherently non-dualizable, we need to use the `non-inherent' version of Lemma~\ref{lem:IND}.

\begin{lemma}[Non-dualizability {\cite{three}}]\label{lem:ND}
Let\/ $\M$ be a finite algebra and let $\nu \in \N$. Assume there is a
subalgebra $\A$ of\/~$\M^I$, for some set\/~$I$, and an infinite subset\/ $A_0$
of\/ $A$ such that
\begin{enumerate}
 \item for each homomorphism $x \colon  \A \to \M$, the equivalence
     relation $\ker (x\rest{A_0})$ has a unique block of size more
     than\/~$\nu$, and
 \item the algebra $\A$ does not contain the element\/ $g$
     of\/ $M^I$ given by $g(i) := a_i(i)$, where $a_i$ is
     any element of the unique infinite block of\/ $\ker
     (\pi_i\rest{A_0})$.
\end{enumerate}
Then $\M$ is non-dualizable.
\end{lemma}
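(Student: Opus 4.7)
The plan is to produce, from the hypothesised data, a map
$f\colon\hom(\A,\M)\to M$ that preserves every finitary compatible
relation on $\M$ (and is continuous in the product topology) but is not
given by evaluation at any element of~$A$; this witnesses the failure of
the canonical hom-cohom adjunction unit $\eta_\A$ to be surjective, so
$\M$ is non-dualizable.

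Define $f$ as follows. For $x\in\hom(\A,\M)$, condition~(1) provides a
unique block $B_x$ of $\ker(x\rest{A_0})$ of size greater than~$\nu$, on
which $x$ is constant; set $f(x)$ to be that common value. The crucial
quantitative observation is that $A_0\setminus B_x$ is \emph{finite} for
every~$x$: since $\M$ is finite, $x\rest{A_0}$ takes at most $|M|$
distinct values, so $\ker(x\rest{A_0})$ has at most $|M|$ blocks, and
all blocks other than $B_x$ have size at most $\nu$; hence
$|A_0\setminus B_x|\le(|M|-1)\nu$.

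I will then check that $f$ is \emph{$k$-locally an evaluation} for every
$k\in\N$. Given $x_1,\dots,x_k\in\hom(\A,\M)$, the set
$A_0\cap B_{x_1}\cap\dots\cap B_{x_k}$ is cofinite in the infinite set
$A_0$, hence non-empty; choosing $a$ in that intersection gives
$f(x_i)=x_i(a)$ for each~$i$, so $f\rest{\{x_1,\dots,x_k\}}$ agrees with
evaluation at~$a$. By Lemma~\ref{lem:pres}, $f$ therefore preserves
every finitary compatible relation on~$\M$; moreover the same cofinite
argument applied to a finite set of coordinates yields continuity of
$f$ with respect to the product topology on $\hom(\A,\M)\subseteq M^A$.
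That $f$ is not an evaluation is now immediate from condition~(2):
should $f=\mathrm{ev}_a$ for some $a\in A$, then applying this to each
projection $\pi_i$ would give $a(i)=\pi_i(a)=f(\pi_i)$, which by the
definition of~$g$ in~(2) equals $g(i)$; hence $a=g$, contradicting
$g\notin A$.

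The only substantive obstacle is verifying that a map which is locally
an evaluation on every finite set really is ``enough'' structure for
non-dualizability in the full sense (beyond preservation of relations,
one needs continuity and, strictly, preservation of compatible partial
operations). Both follow from the same cofinite intersection trick: on
any finite ``test window'' in $\hom(\A,\M)$ or in $A$, the map $f$
coincides with an honest evaluation, so it inherits every structural
property that evaluations possess. Once this point is handled cleanly,
the argument reduces to the concrete construction of~$f$ above.
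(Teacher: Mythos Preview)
The paper does not prove this lemma; it is quoted from~\cite{three} as a black-box tool, so there is no ``paper's own proof'' to compare against. That said, your argument is essentially the standard ghost-element proof and is correct in outline: define $f(x)$ as the value of $x$ on its unique large block $B_x$, use the bound $|A_0\setminus B_x|\le(|M|-1)\nu$ to see that any finite collection of $B_x$'s has cofinite (hence non-empty) intersection in~$A_0$, conclude that $f$ is $k$-locally an evaluation for every $k$, and identify $f(\pi_i)=g(i)$ to rule out $f$ being an evaluation.

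The one place where your write-up is too breezy is continuity. Saying ``the same cofinite argument applied to a finite set of coordinates yields continuity'' is not quite the right picture: continuity is about neighbourhoods in $D(\A)\subseteq M^A$, not about cofinite subsets of~$A_0$. The clean argument is this. Given $x$ with $f(x)=m$, choose $\nu+1$ distinct elements $a_1,\dots,a_{\nu+1}\in B_x$ (possible since $B_x$ is cofinite in the infinite set~$A_0$). If $y\in D(\A)$ agrees with $x$ on $\{a_1,\dots,a_{\nu+1}\}$, then $y(a_1)=\dots=y(a_{\nu+1})=m$, so these $\nu+1$ elements lie in a single block of $\ker(y\rest{A_0})$ of size greater than~$\nu$; by uniqueness this block is $B_y$, whence $f(y)=m$. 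Thus $f$ is locally constant, hence continuous. With that gap filled, your $f$ is a continuous morphism for \emph{every} alter ego of~$\M$ (since it preserves all finitary compatible structure via Lemma~\ref{lem:pres}, which does not require $\A$ finite) yet is not an evaluation, so $\M$ is non-dualizable.
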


The next theorem, which is technical in its details, can be viewed as a partial converse to
 Theorem~\ref{thm:affine}. In slightly simplified terms, it states the following:
 let $\M$ be a finite permutational automatic algebra with commuting letters, at least two of which act
 differently; if $\M$ is dualizable, then it has a letter-affine subalgebra
 with at least two letters acting differently.

\begin{theorem}\label{thm:nondcomm}
Let $\M = \langle Q \cup \Sigma \cup \{0\}; \cdot \rangle$ be a finite
automatic algebra and let $m > 1$. Assume that
\begin{itemize}
 \item[(a)] each $a \in \Sigma$ acts as a permutation $\rho_a$ of\/~$Q$,
 \item[(b)] the permutations in $\{\, \rho_a \mid a \in \Sigma \,\}$ commute,
 \item[(c)] there are $b,c \in \Sigma$ such that the permutation
     $\rho_b(\rho_c)^{-1}$ of\/ $Q$ has order $m$,
 \item[(d)] for each component $C$ of\/~$\M$, there is no non-trivial subgroup
     $H$ of the symmetric group~$S_C$ such that\/ $\abs H$ divides $m$ and
     the set\/ $\{ \rho_a \rest C \mid a \in \Sigma \,\}$ contains a coset
     of\/~$H$.
\end{itemize}
Then $\M$ is non-dualizable.
\end{theorem}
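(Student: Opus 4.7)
The plan is to verify the hypotheses of the non-dualizability lemma (Lemma~\ref{lem:ND}) with $I := \N$. First I use hypothesis~(c) to fix a component $C^*$ of $\M$ on which $(\rho_b\rho_c^{-1})\rest{C^*}$ has some nontrivial order $m^* \mid m$, and set $H := \langle (\rho_b\rho_c^{-1})\rest{C^*}\rangle \le S_{C^*}$. The coset $T := (\rho_c\rest{C^*}) \cdot H$ contains both $\rho_b\rest{C^*}$ and $\rho_c\rest{C^*}$, and has $|H| = m^* \mid m$ elements. Hypothesis~(d) applied to $H$ forces some \emph{ghost} $\tau \in T$ not to be the restriction to $C^*$ of any letter; passing to a proper subgroup of $H$ if needed (which is also eligible under~(d)) ensures this ghost is non-identity, and I then pick $q_0 \in C^*$ with $\tau(q_0) \ne q_0$.

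Set $a_i := q_0 \ov{\tau(q_0)}{i} \in M^\N$ and $A_0 := \{\,a_i \mid i \in \N\,\}$, and let $\A$ be the subalgebra of $\M^\N$ generated by $A_0 \cup \{\,\const a \mid a \in \Sigma\,\}$. Because in $\M$ any product of two state-valued elements, or of two letter-valued elements, collapses to $0$, and because $\tau$ commutes with every $\rho_a$, a straightforward induction shows that
\[
A \subseteq \bigl\{\, p \ov{\tau(p)}{i} \bigm| p \in C^*,\ i \in \N\,\bigr\} \cup \{\,\const a \mid a \in \Sigma\,\} \cup \{\,\const 0\,\}.
\]
The diagonal element of Lemma~\ref{lem:ND} is $g = \const{q_0}$, since the unique infinite block of $\ker(\pi_i \rest{A_0})$ is $\{a_j \mid j \ne i\}$ and each such $a_j$ takes the value $q_0$ at position~$i$. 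Because $\tau(q_0) \ne q_0$, no element of the displayed form equals $\const{q_0}$, so $g \notin A$; this establishes hypothesis~(2) of Lemma~\ref{lem:ND}.

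For hypothesis~(1), let $x \colon \A \to \M$ be a homomorphism and write $b' := x(\const b)$, $c' := x(\const c)$, and $u_i := x(a_i)$. The relations in $\A$ coming from the products $a_i \cdot \const{a_1}\cdots\const{a_k}$ (ranging over words in $\Sigma^*$) constrain the $u_i$ in terms of $b'$, $c'$ and the common orbit structure. If either $b'$ or $c'$ lies outside $\Sigma$, a short case analysis collapses every $u_i$ to $0$, giving a single block of $\ker(x\rest{A_0})$. Otherwise $b',c' \in \Sigma$, and the decisive observation, driven by~(d), is that any $x$ for which $(u_i)$ takes three or more distinct values on an infinite set would, by combining the induced actions of $b'$ and $c'$ at the corresponding coordinates, express $\tau$ (or some non-trivial power) as the action of a single letter of $\Sigma$ on the relevant orbit in $C^*$ --- contradicting $\tau \notin \{\rho_a\rest{C^*}\}$. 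Hence $(u_i)$ is constant outside a finite exceptional set, and choosing $\nu$ to dominate that set yields the required unique large block.

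The main obstacle is precisely the last step: extracting from any hypothetical splitting of $A_0$ under $x$ a Mal'cev-style witness inside $\A$ whose image under $x$ is forced to realize the forbidden $\tau$ (or a non-trivial power) as some $\rho_a\rest{C^*}$. Condition~(d) is exactly the failure of the letter-affine hypothesis of Theorem~\ref{thm:affine}, so the combinatorial argument here must run ``dually'' to that positive proof; the delicate balance is to carry out the reduction within $\A$ without enlarging $\A$ enough to accidentally readmit $g = \const{q_0}$.
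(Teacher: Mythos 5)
Your algebra $\A$ is too loosely connected to force condition (1) of Lemma~\ref{lem:ND}. In any automatic algebra, a product with left factor in $\Sigma$ is $0$ and a product of two state-valued tuples is $\const 0$; so in your $\A$, generated by $A_0 \cup \{\const a \mid a\in\Sigma\}$, the orbit of $a_i$ under right multiplication by constants stays supported on the single varying coordinate $i$, and every product mixing indices collapses to $\const 0$. There is therefore no algebraic relation in $\A$ linking $a_i$ to $a_j$ for $i\neq j$. Consequently a homomorphism $x\colon\A\to\M$ may choose $x(a_i)$ independently for each $i$: take $x(\const a)=a$ for every $a\in\Sigma$, and for each $i$ pick $x(a_i)$ to be \emph{any} state in the component $C^*$ of $q_0$; this extends uniquely and consistently along each orbit. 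Such an $x$ splits $A_0$ into up to $|C^*|$ infinite blocks, so no finite $\nu$ can make condition (1) hold, regardless of what $\tau$ is. Your final step (``any $x$ for which $(u_i)$ takes three or more distinct values\dots would express $\tau$ as the action of a single letter'') simply has no algebraic mechanism behind it and is false. This is precisely why the paper's construction uses genuinely non-constant letter tuples $w_I$ (equal to $b$ on $I$ and $c$ off $I$): the resulting relation $v_i = v_j\cdot w_{K\cup\{i\}}(w_{K\cup\{j\}})^{-1}$ ties all the $v_i$ together inside $\A$, and the choice $\nu := |\Sigma|-1$ is calibrated so that a pigeonhole argument on the list $w_{I_0},\dots,w_{I_{\nu+1}}$ produces, inside $\{\rho_a\rest C\}$, a coset of a nontrivial cyclic subgroup whose order divides $m$ — contradicting (d). None of that machinery is available when the only letter-valued generators are constants.

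There is also a preliminary gap: your non-identity ghost $\tau$ need not exist. For $\M=\mathbf C_3$, which the theorem must handle, $H=\langle\rho_b\rho_c^{-1}\rangle$ has order $3$, $T=\rho_c H$ is the whole cyclic group of order $3$, and $T\setminus\{\rho_b,\rho_c\}$ is just the identity; passing to the only proper subgroup $\{\id{Q}\}$ of $H$ yields no ghost at all. So a $q_0$ with $\tau(q_0)\neq q_0$ does not exist and your $A_0$ cannot even be defined. The paper sidesteps this entirely by needing only distinct states $r=s\cdot bc^{-1}$, which always exist once $m>1$.
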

\begin{proof}
Define $\lambda$ to be the least common multiple of the orders of the
permutations $\rho_b$ and $\rho_c$ of $Q$. Throughout this proof, we blur the
distinction between the elements $b,c$ of $\Sigma$ and the permutations
$\rho_b, \rho_c$ of~$Q$: for $q \in Q$, we write $q \cdot b^{-1}$ to mean $q
\cdot b^{\lambda-1}$ and write $q \cdot c^{-1}$ to mean $q \cdot
c^{\lambda-1}$.

As the permutation $\rho_b(\rho_c)^{-1}$ of $Q$ has order $m > 1$, there are
distinct states $r,s \in Q$ such that
\[
r = s \cdot b c^{-1}.
\]


We shall use the Non-dualizability Lemma~\ref{lem:ND} with $\nu := \abs \Sigma
- 1$. (Note that $\abs \Sigma \ge 2$ and so $\nu \ge 1$.) Define the index set $S := (Q \times \{b,c\}) \cup \N$. For each $i \in
\N$, define $v_i \in Q^S$ by
\[
v_i(q,b) = q, \qquad v_i(q,c) = q \qquad\text{and}\qquad
v_i(j) =
 \begin{cases}
  r &\text{if $j = i$,}\\
  s &\text{otherwise,}
 \end{cases}
\]
for all $q \in Q$ and $j \in \N$. Now define $A_0 := \{\, v_i \mid i \in \N
\,\}$. For each $I \subseteq \N$ with $\abs I = \nu$, define $w_I \in \Sigma^S$
by
\[
w_I(q,b) = b, \qquad w_I(q,c) = c \qquad\text{and}\qquad
w_I(j) =
 \begin{cases}
  b &\text{if $j \in I$,}\\
  c &\text{otherwise,}
 \end{cases}
\]
for all $q \in Q$ and $j \in \N$. Now define $B := \{\, w_I \mid I \subseteq \N
\text{ with } \abs I = \nu\,\}$ and define $A := \sg {\M^S}{A_0 \cup B}$.

\medskip
\emph{Step 1.} We first check condition~\ref{lem:ND}(2). Note that $g \in M^S$
is given by
\[
g(q,b) = q, \qquad g(q,c) = q \qquad\text{and}\qquad
g(j) = s,
\]
for all $q \in Q$ and $j \in \N$. Suppose that $g \in A$. Then we can write
\begin{equation}
g = v_i \cdot w_{I_1} \dotsm w_{I_\ell}\label{eq:g}
\end{equation}
in $\M^S$. By considering equation~\eqref{eq:g} at each coordinate in $Q \times
\{b,c\}$, we infer that $(\rho_b)^\ell =  \id Q = (\rho_c)^\ell$. So $\ell$ is
a multiple of $\lambda$.

\smallskip
\emph{Case 1: $i \notin I_1 \cup \dotsb \cup I_\ell$.} Since $\lambda$
divides~$\ell$, we have $r \cdot c^\ell = r$. But evaluating
equation~\eqref{eq:g} at coordinate~$i$ gives $s = r \cdot c^\ell$, which is a
contradiction.

\smallskip
\emph{Case 2: $i \in I_1 \cup \dotsb \cup I_\ell$.} Enumerate $I_1 \cup \dotsb
\cup I_\ell$ as $i = i_0, i_1, \dotsc, i_k$. For each $j \in \{0,1,\dotsc,k\}$,
let $n_j$ denote the number of occurrences of $i_j$ in the sets
$I_1,\dotsc,I_\ell$. Then $n_0 + n_1 + \dotsb + n_k = \nu \ell$, as the sets
$I_1, \dotsc, I_\ell$ all have size~$\nu$.

Evaluating equation~\eqref{eq:g} at coordinate~$i = i_0$, we get $s = r \cdot
b^{n_0}c^{\ell - n_0}$, since the permutations $\rho_b$ and $\rho_c$ commute.
This gives $s = r \cdot (bc^{-1})^{n_0}$, as $\lambda$ divides~$\ell$. For each
$j \in \{1,\dotsc,k\}$, by evaluating equation~\eqref{eq:g} at coordinate~$i_j$
we get $s = s \cdot b^{n_j}c^{\ell - n_j}$ and so $s = s \cdot
(bc^{-1})^{n_j}$. Since $\lambda$ divides~$\ell$, we now obtain
\begin{align*}
r &= r \cdot (bc^{-1})^{\nu\ell} = r \cdot (bc^{-1})^{n_0 + n_1 + \dotsb + n_k}\\
 &= r \cdot (bc^{-1})^{n_0}(bc^{-1})^{n_1} \dotsm (bc^{-1})^{n_k}
 = s \cdot (bc^{-1})^{n_1} \dotsm (bc^{-1})^{n_k} = s,
\end{align*}
which is a contradiction.

\medskip
\emph{Step 2.} To check condition~\ref{lem:ND}(1), let $x \colon \A \to \M$ be
a homomorphism. By considering three separate cases, we will show that $\ker (x
\rest {A_0})$ has a unique block of size more than~$\nu$. In each case, the
following equation plays a central role:
\begin{equation}
v_i = v_j \cdot w_{K \cup \{i\}}(w_{K \cup \{j\}})^{-1},\label{eq:main}
\end{equation}
for all $i,j \in \N$ and all $K \subseteq \N\comp \{i,j\}$ with $\abs K = \nu -
1$.

\smallskip
\emph{Case 1: $0 \in x(A_0)$.} By equation~\eqref{eq:main}, we must have
$x(A_0) = \{0\}$.

\smallskip
\emph{Case 2: $x(A_0) \cap \Sigma \ne \emptyset$.} By equation~\eqref{eq:main},
we get $x(A_0) = \{0\}$. So this case cannot happen.

\smallskip
\emph{Case 3: $x(A_0) \subseteq Q$.} By equation~\eqref{eq:main}, we have $x(B)
\subseteq \Sigma$ and $x(A_0) \subseteq C$, for some component $C$ of~$\M$. Let
$\gamma \colon \Sigma\to S_C$ map each letter to its action on~$C$; so that
$\gamma(a) = \rho_a \rest C$. Then $\langle \gamma(\Sigma) \rangle$ is a
transitive abelian group of permutations of~$C$, and is therefore regular
(i.e., the stabilizer of each element of $C$ is trivial). For all $w_I \in B
\subseteq \{b,c\}^S$, we have $v_1 = v_1 \cdot (w_I)^\lambda$ in $\A$ and so
$x(v_1) = x(v_1) \cdot x(w_I)^\lambda$ in $\M$; it follows that the order of
the permutation $\gamma(x(w_I))$ of $C$ divides~$\lambda$; this means that, for
$q \in C$, it makes sense to write $q \cdot x(w_I)^{-1}$ to mean $q \cdot
x(w_I)^{\lambda - 1}$.

Assume that $I = \{i_1,\dotsc,i_{\nu+1}\}$ and $J = \{j_1,\dotsc, j_{\nu+1}\}$
are disjoint subsets of~$\N$, each of size~$\nu + 1$, such that
\[
x(v_{i_1}) = x(v_{i_2}) = \dotsb = x(v_{i_{\nu+1}}) \quad\text{and}\quad
x(v_{j_1}) = x(v_{j_2}) = \dotsb = x(v_{j_{\nu+1}}).
\]
It suffices to show that $x(v_{i_1}) = x(v_{j_1})$.

First define a sequence of $\nu + 2$ subsets of~$\N$:
\begin{align*}
I_0 &:= \{i_1, i_2, i_3, \dotsc, i_{\nu+1} \} = I,\\
I_1 &:= \{j_1, i_2, i_3, \dotsc, i_{\nu+1} \},\\
I_2 &:= \{j_1, j_2, i_3, \dotsc, i_{\nu+1} \},\\
&\qquad\qquad \vdots\\
I_{\nu+1} &:= \{j_1, j_2, j_3, \dotsc, j_{\nu+1} \} = J.
\end{align*}
We shall use the following consequence of equation~\eqref{eq:main}:
\begin{equation}
v_{i_1}  \equiv_x  v_{i_{n+1}} =  v_{j_{n+1}} \cdot w_{I_n}(w_{I_{n+1}})^{-1} \equiv_x v_{j_1} \cdot w_{I_n}(w_{I_{n+1}})^{-1},\label{eq:one}
\end{equation}
for all $n  \in \{0,1, \dotsc, \nu\}$. This implies that
\begin{equation*}
x(v_{j_1}) \cdot x(w_{I_0})x(w_{I_1})^{-1} = x(v_{j_1}) \cdot x(w_{I_n})x(w_{I_{n+1}})^{-1},
\end{equation*}
for all $n  \in \{1, \dotsc, \nu\}$. (Recall that the order of each permutation
of $C$ in $\gamma(x(B))$ divides~$\lambda$.) As $\langle \gamma(\Sigma)
\rangle$
is a regular group of permutations of $C$,
 it follows that
\begin{equation}
h := \gamma\big(x (w_{I_1})\big)^{-1} \circ\gamma\big(x (w_{I_0})\big)
 = \gamma\big(x (w_{I_{n+1}})\big)^{-1} \circ \gamma\big(x (w_{I_n})\big),
\label{eq:multiplier}
\end{equation}
for all $n  \in \{1, \dotsc, \nu\}$.

As $x(B) \subseteq \Sigma$ and $\abs \Sigma = \nu + 1$, we have
$\gamma\big(x(w_{I_k})\big) =\gamma\big(x(w_{I_\ell})\big)$, for some $k <
\ell$. We may choose $k,\ell$ with $\ell-k$ minimal. Now consider the distinct
permutations
\begin{equation}\label{eq:coset}
\gamma\big(x(w_{I_k})\big), \gamma\big(x(w_{I_{k+1}})\big), \dots, \gamma\big(x(w_{I_{\ell-1}})\big)
\end{equation}
of $C$.  By (\ref{eq:multiplier}), we have
$$
\gamma\big(x (w_{I_n})\big) \circ h^{-1} = \gamma\big(x (w_{I_{n+1}})\big),
$$
for all $n  \in \{k, \dotsc, \ell - 1\}$, and
$$
\gamma\big(x (w_{I_{\ell-1}})\big) \circ h^{-1} = \gamma\big(x (w_{I_{\ell}})\big) = \gamma\big(x (w_{I_{k}})\big).
$$
It follows that the permutations in (\ref{eq:coset}) form a coset of the cyclic
subgroup $\langle h \rangle$ of~$S_C$. So the permutation $h$ of $C$ has order
$\ell - k$.

As the permutation $\rho_b(\rho_c)^{-1}$ of $Q$ has order~$m$, we have $v_{1}
\cdot (w_{I_0}(w_{I_1})^{-1})^m = v_{1}$ in $\A$, and therefore
\[
h^m\big(x(v_{1})\big) = x(v_{1}) \cdot \big(x(w_{I_0})x(w_{I_1})^{-1}\big)^m = x(v_{1})
\]
in $\M$.
As $\langle \gamma(\Sigma) \rangle$ is regular,
this implies that the order of
$h$ divides~$m$. So $\ell - k$ divides~$m$. By assumption, this is only
possible if the subgroup $\langle h \rangle$ of $S_C$ is trivial, which implies
that $h =\text{id}_C$.
Hence $\gamma(x (w_{I_0})) = \gamma(x (w_{I_1}))$, by~(\ref{eq:multiplier}).
Now, using~(\ref{eq:one}), we have
$$
x(v_{j_1}) = x(v_{j_1}) \cdot x(w_{I_0})x(w_{I_1})^{-1}
=  x\big(v_{j_1} \cdot w_{I_0} (w_{I_{1}})^{-1}) = x(v_{i_1}).
$$
So \ref{lem:ND}(1) holds, as required.
\end{proof}

\begin{chatexample}\label{ex:nondcomm}
The simplest example coming from the previous theorem is the
$3$-state automatic algebra $\mathbf C_3 = \langle \{1, 2, 3\} \cup \{b, c\} \cup \{0\}; \cdot\rangle$ shown in
Figure~\ref{fig:nd}. This algebra is non-dualizable by Theorem~\ref{thm:nondcomm}. But $\mathbf C_3$
is not inherently non-dualizable, by Corollary~\ref{thm:dualabel}: a dualizable automatic algebra can be obtained from $\mathbf C_3$ by adding a letter
that acts as the identity.
\end{chatexample}

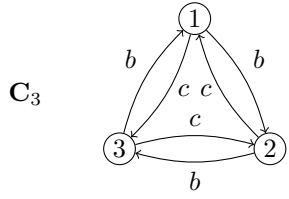
\begin{figure}[t]
\begin{center}
\begin{tikzpicture}
 \node at (0,-1) {$\mathbf C_3$};
 \begin{scope}[xshift=2.25cm]
      \node[state] (1) at (0,0) {$1$};
      \node[state] (2) at ($(1)+(-60:2)$) {$2$};
      \node[state] (3) at ($(1)+(-120:2)$) {$3$};
      \path (1) edge [->, bend left=15] node {$b$} (2);
      \path (2) edge [->, bend left=15] node {$b$} (3);
      \path (3) edge [->, bend left=15] node {$b$} (1);
      \path (1) edge [->, bend left=15, right] node {$c$} (3);
      \path (3) edge [->, bend left=15] node {$c$} (2);
      \path (2) edge [->, bend left=15, left] node {$c$} (1);
   \end{scope}
\end{tikzpicture}
\end{center}
\caption{A non-dualizable automatic algebra}\label{fig:nd}
\end{figure}

We can now give the promised alternating chain.

\begin{example}\label{ex:chain}
There is an infinite ascending chain $\M_1 \le \M_2 \le \M_3
\le \dotsb$ of finite automatic algebras that are alternately
dualizable and non-dualizable.
\end{example}
\begin{proof}
For each odd prime~$p$, let $\mathbf C_p = \langle \{1,2,\dots,p\} \cup \{b,c\}
\cup \{0\}; \cdot\rangle$ be the $p$-state version of the $3$-state automatic
algebra from Figure~\ref{fig:nd}. We start with $\M_1 := \mathbf C_3$. So
$\M_1$ is non-dualizable, by the previous example.

Now assume $\M_n = \langle Q_n \cup \Sigma_n \cup \{0\}; \cdot
\rangle$ has been defined, for some odd number~$n$, so that
$\Sigma_n$ consists of commuting permutations of~$Q_n$. To
create~$\M_{n+1}$, take $Q_{n+1} := Q_n$ and construct
$\Sigma_{n+1}$ from $\Sigma_n$ by adding enough new
permutations so that $\Sigma_{n+1}$ forms an abelian group of
permutations of~$Q_{n+1}$. Then $\M_{n+1}$ is dualizable, by
Corollary~\ref{thm:dualabel}.

Finally, assume that $\M_n = \langle Q_n \cup \Sigma_n \cup \{0\}; \cdot
\rangle$ has been defined, for some even number~$n$, so that $\Sigma_n$
consists of commuting permutations of~$Q_n$. Choose a prime $p > \abs
{\Sigma_n} + 3$. Define $Q_{n+1} := Q_n \du \{1,2,\dotsc,p\}$ and $\Sigma_{n+1}
:= \Sigma_n \du \{b,c\}$. In $\M_{n+1}$, each letter in $\Sigma_n$ should act
on $Q_n$ as it does in~$\M_n$ and act on $\{1,2,\dotsc,p\}$ as the identity,
and each letter in $\{b,c\}$ should act on $Q_n$ as the identity and act on
$\{1,2,\dotsc,p\}$ as it does in $\mathbf C_p$.
So the action corresponding to $bc^{-1}$ has order~$p$.

We will use the previous theorem to check that $\M_{n+1}$ is non-dualizable.
Let $D$ be a component of~$\M_{n+1}$. If $D = \{1,2,\dotsc,p\}$, then $\abs
{\{\, \rho_a \rest D \mid a \in \Sigma_{n+1}\,\}} = 3 < p$. If $D \subseteq
Q_n$, then
\[
\abs {\{\, \rho_a \rest D \mid a \in \Sigma_{n+1}\,\}} \le \abs
{\{\, \rho_a \rest D \mid a \in \Sigma_{n}\,\}} + 1 < p,
\]
since $\abs {\Sigma_{n}} + 1 < p$. It follows that $\M_{n+1}$ is
non-dualizable.
\end{proof}


\section{Appendix}

This appendix contains proofs of the two purely group-theoretic results used in
Section~\ref{sec:trickydual}.

\begin{prop}
Let $H$ be a finite abelian group with exponent dividing $m$ and let $u \in H$. Then there
is a homomorphism $\chi \colon H \to \Z_m$ such that, for all $h \in H\setminus
\{e\}$, there exists $\varphi\in \End H$ with $\varphi(u)=u$ and\/
$\chi(\varphi(h)) \ne 0$.
\end{prop}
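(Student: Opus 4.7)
The plan is to reduce to the case where $H$ is a finite abelian $p$-group via the primary decomposition, and then construct $\chi$ explicitly using a cyclic direct-sum decomposition of $H$ in which $u$ lies inside a single cyclic summand.

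For the reduction, I would decompose $H = \bigoplus_p H_p$ and $u = \sum_p u_p$ into primary components. Every endomorphism of $H$ stabilizes each $H_p$, and an endomorphism of $H$ fixes $u$ if and only if its $p$-primary part fixes $u_p$ for every $p$; similarly $\Z_m = \bigoplus_p \Z_{p^{e_p}}$ (where $m = \prod_p p^{e_p}$) and every homomorphism $\chi\colon H \to \Z_m$ splits as $\chi = \sum_p \chi_p$. If the proposition holds for each primary pair $(H_p, u_p)$, I assemble the $\chi_p$'s: for a nonzero $h = \sum_p h_p$, pick a prime $p_0$ with $h_{p_0} \ne 0$, take the $\varphi_{p_0}$ supplied by the $p_0$-primary case, and extend by the identity on the other primary components. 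Since distinct primary components of $\Z_m$ meet trivially, the nonzero $p_0$-contribution to $\chi(\varphi(h))$ cannot be cancelled by the other primes.

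For the $p$-group case with $\exp H = p^e$, it suffices to construct $\chi\colon H \to \Z_{p^e}$ and then compose with the natural inclusion $\Z_{p^e} \hookrightarrow \Z_m$. Let $k$ be maximal with $u \in p^k H$, and write $u = p^k v$ with $v \in H \setminus pH$. The standard fact that any element of $H \setminus pH$ generates a direct summand yields a cyclic decomposition $H = \langle e_1 \rangle \oplus \cdots \oplus \langle e_r \rangle$ with $e_1 = v$, so that $u = p^k e_1$ sits in the first summand; write $|\langle e_j \rangle| = p^{c_j}$. I would then define $\chi(\sum_j x_j e_j) := \sum_j x_j p^{e - c_j}$. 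To verify the property for a nonzero $h = \sum_j x_j e_j$, I split into two cases. If $x_1 \not\equiv 0 \pmod{p^{c_1}}$, take $\varphi$ to be the projection onto $\langle e_1 \rangle$ (sending $e_1 \mapsto e_1$ and $e_j \mapsto 0$ for $j \ge 2$); this fixes $u$ and gives $\chi(\varphi(h)) = x_1 p^{e - c_1} \ne 0$. Otherwise some $x_{j^*} \ne 0$ with $j^* \ge 2$, and I take $\varphi$ to send $e_1 \mapsto e_1$, $e_{j^*} \mapsto e_{j^*}$, and all other generators to $0$; this again fixes $u$ and gives $\chi(\varphi(h)) = x_{j^*} p^{e - c_{j^*}} \ne 0$.

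The main obstacle is the choice of cyclic decomposition: the naive attempt $e_1 = u$ fails because $\langle u \rangle$ need not be a direct summand of $H$ (for instance, $\langle 2 \rangle \subset \Z/4\Z$). Replacing $u$ by a preimage $v$ of minimal $p$-height bypasses this, and the two-case construction of $\varphi$ then succeeds because fixing $u = p^k e_1$ only constrains $\varphi(e_1)$ modulo $H[p^k]$ and places no constraint whatever on $\varphi(e_j)$ for $j \ge 2$.
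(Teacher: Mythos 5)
The reduction to $p$-groups is sound, but the $p$-group step rests on a false claim. You assert that any $v\in H\setminus pH$ generates a direct summand of the finite abelian $p$-group $H$. This is not true: take $H=\Z_{p^3}\times\Z_p$ and $v=(p,1)$. Then $pH=p\Z_{p^3}\times\{0\}$, so $v\notin pH$, and $v$ has order $p^2$. If $\langle v\rangle$ were a direct summand, then $H\cong\Z_{p^2}\times K$ with $|K|=p^2$, so $H$ would have exponent at most $p^2$, contradicting the presence of an element of order $p^3$. (Equivalently, $pv=(p^2,0)\in p^2H\setminus p^2\langle v\rangle$, so $\langle v\rangle$ is not a pure subgroup.) What is true is that an element of \emph{maximal order} generates a direct summand, and your $v$ need not have maximal order.

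The gap is not repairable within your framework: with $u=v=(p,1)$ in the example above, there is \emph{no} cyclic decomposition of $H$ in which $u$ lies in a single summand. Any such decomposition is $\langle w_1\rangle\oplus\langle w_2\rangle$ with $|w_1|=p^3$, $|w_2|=p$; writing $w_1=(a,b)$ forces $a$ to be a unit in $\Z_{p^3}$, so every element of $\langle w_1\rangle$ of order $p^2$ has the form $(pc'a,0)$, with zero $\Z_p$-coordinate, whence $u\notin\langle w_1\rangle$; and $u$ has order $p^2>p$, so $u\notin\langle w_2\rangle$. Consequently, projections onto one or two cyclic summands can never fix such a $u$, and your two-case construction of $\varphi$ fails at exactly this point.

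The paper's proof sidesteps this by not attempting to isolate $u$ in a single summand. In the two-factor case $\Zp{n_1}\times\Zp{n_2}$ with $n_1\le n_2$, it first applies a shear automorphism $(x,y)\mapsto(x,p^{n_2-n_1}x+y)$ to arrange that the second coordinate of $u$ has at least the order of the first, and takes $\chi$ to be the second projection. When the second coordinate of $h$ vanishes, the endomorphism $\varphi$ is \emph{not} a projection but another shear $(x,y)\mapsto(x,\mu(x)-cy)$, with $c$ chosen so that $\varphi(u)=u$; this transports the nonzero first coordinate of $h$ into the slot that $\chi$ reads. The general $p$-group case is then handled by iterating this two-factor normalization. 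If you wish to keep a cyclic-decomposition viewpoint, you will need off-diagonal endomorphisms of this kind rather than bare projections, since normalizing $u$ into a single summand is in general impossible.
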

\begin{proof}
We prove the claim first for abelian $p$-groups of the form $\mathbb
Z_{p^{n_1}} \times \mathbb Z_{p^{n_2}}$, then for arbitrary finite abelian
$p$-groups, and finally for arbitrary finite abelian groups.

Assume that $H=\Zp{n_1} \times \Zp{n_2}$ with $n_1 \le n_2$, and write
$u=(u_1,u_2)$. Factorize $u_i$ as $a_ip^{k_i}$, where $k_i \le n_i$ and $p
\nmid a_i$. Then $u_i$ has order $p^{d_i}$ in $\Zp{n_i}$, where $d_i :=
n_i-k_i$. Let $\mu\colon \Zp{n_1}\to \Zp{n_2}$ be the homomorphism
$\mu(x)=p^{n_2-n_1}x$. We first show that we can assume with no loss of
generality that $d_1\le d_2$. We use the automorphism $\sigma \colon H\to H$
given by $\sigma((x,y)) = (x,\mu(x)+y)$. If $d_1  > d_2$, then
\begin{align*}
\mu(u_1)+u_2 &=
p^{n_2-n_1}u_1+u_2 = p^{n_2-n_1}a_1p^{k_1} + a_2p^{k_2} \\ &= a_1p^{n_2-d_1} +
a_2p^{n_2-d_2} = p^{n_2-d_1}(a_1 + a_2p^{d_1 - d_2}),
\end{align*}
with $p \nmid (a_1+a_2p^{d_1 - d_2})$; so the image $v=(u_1,\mu(u_1)+u_2)$ of
$u$ under the automorphism $\sigma$ is such that its second coordinate has
order $p^{d_1}$ in $\Zp{n_2}$, equal to the order of its first coordinate.

With the factorization of $H$ thus adjusted, we now take $\chi \colon H \to
\Zp{n_2}$ to be the second projection. Let $h=(h_1,h_2) \in H \comp \{\hat
0\}$. If $h_2\ne 0$, then we can choose $\phi$ to be the identity endomorphism
of~$H$. Assume now that $h_2=0$ and $h_1\ne 0$.  Define $d := d_2-d_1 \ge 0$,
let $b$ be the multiplicative inverse of $a_2$ in $\Zp{n_2}$, and let $c :=
(a_1p^d-a_2)b$. Define $\phi \colon H\to H$ by $\phi((x,y)) = (x,\mu(x) - cy)$.
Obviously $\phi\in\End H$ and $\chi(\phi(h)) = \mu(h_1) \ne 0$, as $\mu$ is
injective.  It remains to check that $\phi(u)=u$ or, equivalently, that
$\mu(u_1)-cu_2=u_2$.  In fact,
\begin{align*}
\mu(u_1)-cu_2 &= p^{n_2-n_1}a_1p^{k_1} - (a_1p^d-a_2)ba_2p^{k_2}\\
&= a_1(p^{n_2-n_1+k_1} - p^{d+k_2}) + a_2p^{k_2} = 0 + u_2,
\end{align*}
where the 0 term arises as $n_2-n_1+k_1 = n_2-d_1 = (n_2-d_2) + (d_2-d_1) =
k_2+d$.

Next, we prove the claim for abelian $p$-groups.  Assume $H = \Zp{n_1} \times
\dots \times \Zp{n_k}$, where $n_1 \le \dotsb \le n_k$. Write $u= (u_1, \dots,
u_k)$, let $p^{d_i}$ be the order of $u_i$ in $\Zp{n_i}$, and define $d :=
\max(d_1,\dots,d_k)$. If $d \ne d_k$, then pick $i<k$ with $d_i=d$ and consider
$\Zp{n_i} \times \Zp{n_k}$.  By the argument for the previous case, we can find
an automorphism of $\Zp{n_i} \times \Zp{n_k}$ that sends $(u_i,u_k)$ to
$(u_i,v)$, where $v$ is of order $p^d$ in $\Zp{n_k}$. Thus by revising the
decomposition of~$H$, we can assume with no loss of generality that $d=d_k$.
Now take $\chi \colon H \to \Zp{n_k}$ to be the $k$th projection. Let $h =(h_1,
\dots, h_k) \in H\setminus \{\hat 0\}$. If $h_k\ne 0$, then we can choose
$\phi$ to be the identity endomorphism of~$H$. So assume that $h_k=0$. Choose
$i<k$ with $h_i\ne 0$. By the argument for the previous case, we can find $\psi
\in \End{ \Zp{n_i} \times \Zp{n_k}}$ such that $\psi((u_i,u_k)) = (u_i,u_k)$
and $\pi_2\circ \psi((h_i,0)) \ne 0$. If we define $\phi$ to act as $\psi$ on
$\Zp{n_i} \times \Zp{n_k}$ and as the identity on the other factors, then we
get our desired endomorphism.

Finally, we prove the claim for arbitrary finite abelian groups.  Let
$p_1,\dotsc,p_k$ be distinct primes and let $H = H_1 \times \dots \times H_k$,
where $H_i$ is an abelian $p_i$-group. Write $u=(u_1,\dotsc,u_k)$. By the
previous case, for each $i \in \{1,\dotsc,k\}$ we can find a homomorphism
$\chi_i \colon H_i \to \mathbb Z_{p_i^{n_i}}$, where $p_i^{n_i} \mid m$, such
that for every $h \in H_i\setminus \{\hat 0\}$ there exists $\phi \in \End
{H_i}$ satisfying $\phi(u_i)=u_i$ and $\chi_i(\phi(h)) \ne 0$. We can now take
$\chi := \chi_1 \sqcap \dots \sqcap \chi_k \colon H \to \mathbb Z_{p_1^{n_1}}
\times \dots \times \mathbb Z_{p_k^{n_k}}$ to be the natural product map.
\end{proof}

While the following basic lemma can be proved using elementary methods, it also
follows immediately from the fact that the cyclic group $\mathbb Z_m$ is
strongly self-dualizing; see~\cite[4.4.2]{NDftWA}.

\begin{lem}
Let $m,k \in \N$ and let $H$ be a subgroup of $(\mathbb Z_m)^k$. Then $H$ can
be described as the set of solutions in $\mathbb Z_m$ to a system of
homogeneous linear equations in $k$ variables with integer coefficients.
\end{lem}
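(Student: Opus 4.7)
The plan is to realize $H$ as the common kernel of finitely many homomorphisms $(\mathbb Z_m)^k \to \mathbb Z_m$, and then translate each such kernel into a single homogeneous linear congruence with integer coefficients. First I would form the quotient $\pi\colon (\mathbb Z_m)^k \to G$, where $G := (\mathbb Z_m)^k / H$, so that $H = \ker\pi$. It therefore suffices to embed $G$ into some power $(\mathbb Z_m)^N$: composing $\pi$ with such an embedding and then with the $N$ coordinate projections yields homomorphisms $\chi_1,\dots,\chi_N \colon (\mathbb Z_m)^k \to \mathbb Z_m$ whose common kernel is exactly $H$.

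To produce the embedding I would appeal to the structure theorem for finite abelian groups, which gives $G \cong \mathbb Z_{d_1}\times\cdots\times \mathbb Z_{d_r}$; since $(\mathbb Z_m)^k$ has exponent dividing $m$, so does $G$, forcing $d_i \mid m$ for each $i$. Each factor $\mathbb Z_{d_i}$ then embeds into $\mathbb Z_m$ via $x \mapsto (m/d_i)\,x$, yielding an embedding $\iota\colon G \hookrightarrow (\mathbb Z_m)^r$. Next, any homomorphism $\chi\colon (\mathbb Z_m)^k \to \mathbb Z_m$ is determined by its values on the standard generators and hence has the form $\chi(x_1,\dots,x_k) = c_1x_1+\cdots+c_kx_k$ for some $c_i \in \mathbb Z_m$; lifting the $c_i$ to integers, the kernel of $\chi$ is precisely the solution set in $\mathbb Z_m$ of the equation $c_1x_1+\cdots+c_kx_k \equiv 0 \pmod m$. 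Applying this to each of the $\chi_j$ arising from the embedding produces the desired system of $r$ equations.

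The only nontrivial ingredient is the existence of the embedding $G \hookrightarrow (\mathbb Z_m)^r$, which rests on the structure theorem together with the elementary fact that $\mathbb Z_d$ embeds in $\mathbb Z_m$ whenever $d \mid m$. If one prefers to avoid the structure theorem, an equivalent route is to verify directly that for every nonzero $g \in G$ there is a homomorphism $G \to \mathbb Z_m$ not vanishing at $g$ (reflecting the fact that $\mathbb Z_m$ is an injective cogenerator within the category of abelian groups of exponent dividing $m$), and then take $H$ to be the intersection of the kernels of all such characters; finiteness of $G$ then collapses this intersection to a finite one. Either route finishes the proof in a few lines.
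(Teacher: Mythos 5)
Your argument is correct. Note, though, that the paper does not actually prove this lemma: it simply observes that the claim ``follows immediately from the fact that the cyclic group $\mathbb Z_m$ is strongly self-dualizing'' and cites~\cite[4.4.2]{NDftWA}, while remarking that an elementary proof also exists. What you have supplied is precisely such an elementary proof. Your main route (quotient by $H$, decompose $G=(\mathbb Z_m)^k/H$ via the structure theorem into cyclic factors $\mathbb Z_{d_i}$ with $d_i\mid m$, embed each into $\mathbb Z_m$ by $x\mapsto (m/d_i)x$, and read off the resulting coordinate maps $(\mathbb Z_m)^k\to\mathbb Z_m$ as linear forms with integer coefficients) is self-contained and checks out at every step; in particular the key point that the exponent of $G$ divides $m$, forcing $d_i\mid m$, is exactly what makes the embedding $G\hookrightarrow (\mathbb Z_m)^r$ possible. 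Your alternative route --- using that $\mathbb Z_m$ is an injective cogenerator for abelian groups of exponent dividing $m$, so that characters into $\mathbb Z_m$ separate points of $G$ --- is essentially the self-duality fact the paper invokes, just phrased in homological language. So the two approaches buy different things: the paper's citation is a one-liner for readers familiar with natural duality theory, while your structure-theorem argument is fully explicit, requires no duality background, and shows concretely how to produce the system of congruences. Both are valid; your primary argument is the ``elementary method'' the authors alluded to but chose not to write out.
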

%

\begin{prop}
Assume $M$ is a $j \times k$ matrix over $\mathbb Z_m$ whose rows form a
subgroup of $(\mathbb Z_m)^k$, whose columns form a coset of a subgroup of
$(\mathbb Z_m)^j$, and which is such that every row contains at least one~$0$.
Then some column is constantly~$0$.
\end{prop}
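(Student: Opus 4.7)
The plan is to interpret the hypotheses via Pontryagin-style duality for finite abelian groups of exponent dividing~$m$, and then to conclude from the double annihilator property, which is an immediate corollary of the lemma just above the proposition.

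Let $R \le (\mathbb{Z}_m)^k$ denote the row subgroup and write the column coset as $T = c_0 + S'$, with $S' \le (\mathbb{Z}_m)^j$ and $c_0 \in T$. Since $R$ is closed under coordinatewise addition, the $c$th column of $M$, viewed as the function $R \to \mathbb{Z}_m$ sending each row vector to its $c$th entry, is a group homomorphism. After collapsing repeated rows (which is harmless), one may therefore regard $T$ as a subset of $R^{\vee} := \operatorname{Hom}(R, \mathbb{Z}_m)$, and consequently $S' \le R^{\vee}$ under the same identification.

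The key step is to introduce $N := \{r \in R : \psi(r) = 0 \text{ for all } \psi \in S'\}$, the annihilator $(S')^{\perp}$ of $S'$ in $R$ under the canonical pairing $R \times R^{\vee} \to \mathbb{Z}_m$. For any $r \in N$ and any $\chi = c_0 + \psi \in T$, one has $\chi(r) = c_0(r) + \psi(r) = c_0(r)$; that is, every column of $M$ takes the common value $c_0(r)$ at row~$r$. Applying the hypothesis ``every row contains a zero'' to this row furnishes some $\chi \in T$ with $\chi(r) = 0$, forcing $c_0(r) = 0$. Since $r \in N$ was arbitrary, $c_0 \in N^{\perp}$ in $R^{\vee}$.

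To finish, one invokes the double annihilator identity $N^{\perp} = ((S')^{\perp})^{\perp} = S'$, which follows at once from the preceding lemma (every subgroup of $(\mathbb{Z}_m)^n$ is the intersection of the kernels of the linear forms that vanish on it). Therefore $c_0 \in S'$, so $T = c_0 + S' = S'$ contains~$0$; this zero element corresponds to a column of $M$ whose entries all vanish. The main obstacle is really just a presentational matter, namely, correctly identifying the columns (living a priori in $(\mathbb{Z}_m)^j$) with elements of $R^{\vee}$ when rows may be repeated; once that identification is in place, the proof reduces to a single line of duality.
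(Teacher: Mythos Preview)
Your proof is correct but takes a different route from the paper's. The paper works concretely with the set $E \subseteq \mathbb{Z}^k$ of integer coefficient vectors $\mathbf{c}$ annihilating every row, and splits on $d := \gcd\{\sum_i c_i : \mathbf{c} \in E\}$ (noting $m \in$ this set). If $d > 1$, then for a prime $p \mid d$ the constant vector $(m/p,\ldots,m/p)$ satisfies every linear relation on the rows and hence, by the lemma, lies in the row group---contradicting ``every row has a zero''. If $d = 1$, one chooses $\mathbf{c} \in E$ with $\sum_i c_i = 1$; then $\sum_i c_i C_i$ is simultaneously a column (an affine combination of a coset) and identically zero (since $\mathbf{c} \in E$). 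Your argument instead recasts the columns as characters of $R$ and applies the double annihilator $((S')^\perp)^\perp = S'$ for the pairing $R \times R^\vee \to \mathbb{Z}_m$: the ``zero in every row'' hypothesis forces $c_0$ to vanish on $(S')^\perp$, whence $c_0 \in S'$ and $0 \in T$. The paper's version is more elementary and avoids the auxiliary identification of columns with characters; yours is cleaner conceptually and sidesteps the case split. One minor point: deriving the double annihilator for an arbitrary $R$ from the stated lemma (which concerns the self-pairing on $(\mathbb{Z}_m)^n$) is routine but not literally ``at once''---it takes a couple of lines, passing through a presentation $R \cong (\mathbb{Z}_m)^n/K$ and identifying $R^\vee$ with $K^\perp$.
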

\begin{proof}
Let $H$ be the subgroup of $(\mathbb Z_m)^k$ consisting of the rows of $M$. Let
$E$ be the set of all $\tup{c}=(c_1,\ldots,c_k) \in \mathbb Z^k$ such that the
equation ``$\sum_{i=1}^kc_ix_i=0$'' is satisfied by every member of $H$. Define
$R=\set{\sum_{i=1}^kc_i}{$\tup{c} \in E$}$ and note that $m \in R$ (as $H$
satisfies the equation $mx_1 = 0$). Thus we can define $d=\gcd(R)$.

\medskip

\emph{Case 1}: $d>1$. Choose a prime $p \mid d$.  Then $p \mid m$, so $a :=
m/p$ is a nonzero element of $\mathbb Z_m$.  Thus $(a,a,\dots,a)$ is a solution
of every homogeneous linear equation satisfied by $H$, by the choice of $p$,
and hence $(a,a,\dots,a) \in H$, by the previous lemma.  But this contradicts
the assumption that every row of $M$ contains at least one $0$.

\medskip

\emph{Case 2}: $d=1$. Since $E$ is closed under integer linear combinations,
there exists $\tup{c} \in E$ such that $\sum_{i=1}^k c_i=1$.  Let
$C_1,\dots,C_k$ denote the columns of $M$ and define $D=\sum_{i=1}^k c_iC_i$.
Then $D$ is a column of $M$, since we are assuming the columns of $M$ form a
coset of a subgroup of $(\mathbb Z_m)^j$. But the fact that $\tup{c}\in E$
implies that $D$ is constantly $0$.
\end{proof}


\begin{thebibliography}{99}

\bibitem{B:PhD} John Franklin Boozer IV, \emph{On the Finite Axiomatizability of Equational Theories of Automatic Algebras}, PhD thesis, University of South Carolina, 2010.

\bibitem{NDftWA} D.~M.~Clark and B.~A.~Davey, \emph{Natural
    Dualities for the Working Algebraist}, Cambridge University
    Press, 1998.

\bibitem{binhoms} D.~M.~Clark, B.~A.~Davey and J. G. Pitkethly, Binary
    homomorphisms and natural dualities, \emph{J. Pure Appl. Algebra}
    \textbf{169} (2002), 1--28.

\bibitem{three} D.~M.~Clark, B.~A.~Davey and
    J.~G.~Pitkethly, The complexity of dualisability:
    three-element unary algebras, \emph{Internat. J.
    Algebra Comput.}  \textbf{13} (2003), 361--391.

\bibitem{CISSW} D.~M.~Clark, P.~M.~Idziak,
    L.~R.~Sabourin, Cs.~Szab\'o and R.~Willard, Natural
    dualities for quasivarieties generated by a finite
    commutative ring, \emph{Algebra Universalis} \textbf{46} (2001), 285--320.



\bibitem{DHM:NU}%
B. A. Davey, L. Heindorf and R. McKenzie, Near unanimity: an obstacle to
general duality theory, \emph{Algebra Universalis} \textbf{33} (1995),
428--439.

\bibitem{DILM} B.~A.~Davey, P.~M.~Idziak, W.~A.~Lampe
    and G.~F.~McNulty, Dualizability and graph algebras,
    \emph{Discrete Math.} \textbf{214} (2000), 145--172.

\bibitem{rlind} B.~A.~Davey, J.~G.~Pitkethly and R.~Willard, Dualisability
    versus residual character: a theorem and a counterexample, \emph{J. Pure Appl. Algebra} \textbf{210}
    (2007), 423--435.

\bibitem{DW:first}%
B. A. Davey and H. Werner, Dualities and equivalences for varieties of
algebras, \emph{Contributions to Lattice Theory} (\emph{Szeged, 1980}) (A.~P.
Huhn and E.~T. Schmidt, eds), Colloquia Mathematica Societatis J\'anos Bolyai
\textbf{33}, North-Holland, 1983, pp.~101--275.

\bibitem{DWind} B.~A.~Davey and R.~Willard, The dualisability
    of a quasi-variety is independent of the generating
    algebra, \emph{Algebra Universalis} \textbf{45}
    (2001), 103--106.

\bibitem{J:sgps} Marcel Jackson, Natural dualities, nilpotence and projective planes, preprint.

\bibitem{KW} K.~Kearnes and R~Willard, Inherently nonfinitely based solvable algebras,
    \emph{Canad.\ Math.\ Bull.}\ \textbf{37} (1994), 514--521.

\bibitem{LMW} W.~A.~Lampe, G.~F.~McNulty and R.~Willard,
    Full duality among graph algebras and flat graph algebras,
    \emph{Algebra Universalis} \textbf{45} (2001),
    311--334.

 \bibitem{Lyn}
R.~C.~Lyndon, Identities in finite algebras. \emph{Proc. Amer.
Math. Soc.} \textbf{5} (1954), 8--9.

\bibitem{mck-quack}
R.~McKenzie, The residual bounds of finite algebras, \emph{Internat.\ J. Algebra Comput.}\ \textbf{6}
(1996), 1--28.

\bibitem{mck-tarski}
R.~McKenzie, Tarski's finite basis problem is undecidable, \emph{Internat.\ J. Algebra Comput.}\ \textbf{6}
(1996), 49--104.

 \bibitem{dozen}
George F. McNulty, A dozen easy problems, American Mathematical Society Special Session, St Paul, April 2010.

 \bibitem{MSW}
George F. McNulty, Zolt\'an Sz\'ekely and Ross Willard,
Equational complexity of the finite algebra membership problem.
\emph{Internat. J. Algebra Comput.} \textbf{18} (2008),
1283--1319.

 \bibitem{MN}
M. H. Nickodemus, \emph{Natural Dualities for Finite Groups with Abelian Sylow Subgroups},
PhD thesis, University of Colorado, 2007.

 \bibitem{idual}
J.~G.~Pitkethly, Inherent dualisability, \emph{Discrete Math.}
\textbf{269} (2003), 219--237.

 \bibitem{unc}
J.~G.~Pitkethly, Uncountably many dualisable algebras,
\emph{Internat. J. Algebra Comput.}, to appear.

 \bibitem{DUAaB}
J. Pitkethly and B. Davey, \emph{Dualisability: Unary Algebras and Beyond},
Springer, 2005.

\bibitem{QS1} R.~Quackenbush and Cs.~Szab\'o, Nilpotent
    groups are not dualizable, \emph{J. Austral. Math.
    Soc.} \textbf{72} (2002), 173--179.


\bibitem{QS2} R.~Quackenbush and Cs.~Szab\'o, Strong duality
    for metacyclic groups, \emph{J. Austral. Math. Soc.}
    \textbf{73} (2002), 377--392.

\bibitem{Sind} M.~J.~Saramago,  Some remarks on dualisability
    and endodualisability, \emph{Algebra Universalis}
    \textbf{43} (2000), 197--212.

\bibitem{ZS} Zolt\'an Sz\'ekely, \emph{Complexity of the Finite Algebra
    Membership Problem for Varieties}, PhD thesis, University of South
    Carolina, Columbia, 1998.

\bibitem{rank} R.~Willard, New tools for proving
    dualizability,
    \emph{Dualities, Interpretability and Ordered Structures}
    (\emph{Lisbon, 1997}), Centro de \'Algebra da Universidade de
    Lisboa, 1999, pp. 69--74.

\bibitem{dct}
  L. Z\'adori, Natural duality via a
    finite set of relations, \emph{Bull. Austral. Math. Soc.} \textbf{51}
    (1995), 469--478.

\end{thebibliography}
\end{document}